\newcommand{\Spec}{\operatorname{Spec}}
\renewcommand{\phi}{\varphi}
\newcommand{\Max}{\operatorname{Max}}
\newcommand{\Min}{\operatorname{Min}}
\newcommand{\Proj}{\operatorname{Proj}}
\newcommand{\Clop}{\operatorname{Clop}}
\newcommand{\Fin}{\operatorname{Fin}}
\newtheorem{proposition}{Proposition}[section]
\newtheorem{lemma}[proposition]{Lemma}
\newtheorem{corollary}[proposition]{Corollary}
\newtheorem{theorem}[proposition]{Theorem}
\theoremstyle{definition}
\newtheorem{definition}[proposition]{Definition}
\newtheorem{example}[proposition]{Example}
\newtheorem{remark}[proposition]{Remark}
\patchcmd{\@settitle}{\uppercasenonmath\@title}{}{}{}
\patchcmd{\@setauthors}{\MakeUppercase}{}{}{}
\begin{document}

\title[Connected components of schemes]{Connected components of qcqs schemes and projective spaces}
\author[A. Tarizadeh]{Abolfazl Tarizadeh}
\address{Department of Mathematics, Faculty of Basic Sciences, University of Maragheh, Maragheh, East Azerbaijan Province, Iran.}
\email{ebulfez1978@gmail.com}

\date{}
\subjclass[2020]{14A15, 14A25, 14A05, 13A02, 13A15, 16U40}
\keywords{Connected component; quasi-spectral space; qcqs scheme; projective space; clopen subset; primitive idempotent}

\begin{abstract} In this article, we first prove a general result in topology which states that every quasi-component of a quasi-spectral 
space is connected. \\
As an application, the structure of the connected components of every quasi-compact quasi-separated (qcqs) scheme $X$ is fully characterized. They are exactly of the form $f^{-1}(C)$ where $f:X\rightarrow\Spec(R)$ is the canonical morphism, $C$ is a connected component of $\Spec(R)$ and $R=\mathscr{O}_{X}(X)$ is the ring of global sections of $X$. \\ 
Next, we make new advances in understanding the structure of the connected components of projective spaces. In general, for an $\mathbb{N}$-graded ring $R=\bigoplus\limits_{n\geqslant0}R_{n}$, the structure of the connected components of scheme $\Proj(R)$ is still unknown. However, we show that for any scheme $S$ the connected components of the projective space $\mathbb{P}^{n}_{S}=
\mathbb{P}^{n}_{\mathbb{Z}}\times_{\Spec(\mathbb{Z})}S$ are exactly of the form $\mathbb{P}^{n}_{C}$ where $C$ is a connected component of $S$ which is equipped with a closed subscheme structure. \\ 
Finally, we characterize the finiteness of the number of connected components of a (quasi-compact) topological space in terms of purely algebraic conditions, in which the primitive idempotents play an important role in this characterization. 
\end{abstract}

\maketitle

\section{Introduction}

The subject of this article lies at the intersection of algebraic geometry, commutative algebra, and topology. 

A classical result in topology asserts that  every quasi-component of a compact space is connected (in other words, the connected components of a compact space are exactly its quasi-components). By compact space we mean a quasi-compact and Hausdorff topological space. In a topological space, by quasi-component we mean the intersection of all clopen (both open and closed) subsets containing a point.  In addition to the quasi-compactness, the Hausdorff assumption and hence normality play a major role in proving this fundamental theorem (see e.g. \cite[Exercise I.10.1]{Bredon} or \cite[Theorem 6.1.23]{Engelking}). 

On the other hand, an affine scheme $\Spec(R)$ is not necessarily Hausdorff or normal but its connected components still have the above property. Note that $\Spec(R)$ is Hausdorff if and only if $\dim(R)=0$. More generally, $\Spec(R)$ is a normal space if and only if $R$ is a Gelfand ring, i.e., for every prime ideal of $R$ there exists a unique maximal ideal of $R$ containing it (see e.g. \cite[Theorem 4.3]{Tarizadeh 3}). 

Inspired by the above observations, our first goal in this article is to investigate general quasi-compact spaces that are not necessarily Hausdorff but whose connected components still have the above property. We know that non-Hausdorff spaces naturally arise in many fields of mathematics, especially commutative algebra and algebraic geometry. In fact, we prove the following general theorem which is one of the main results of this article:

\begin{theorem} Every quasi-component of a quasi-spectral space is connected. 
\end{theorem}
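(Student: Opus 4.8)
The plan is to argue purely topologically and by contradiction, using the basis of quasi-compact open sets as a substitute for the normality on which the compact Hausdorff case relies. Fix $x\in X$ and let $Q=\bigcap\mathcal{F}$ be its quasi-component, where $\mathcal{F}$ denotes the family of all clopen subsets $K$ of $X$ with $x\in K$; note that $\mathcal{F}$ is closed under finite intersection. Since every clopen set containing $x$ contains the whole connected component of $x$, that component is contained in $Q$, so it suffices to prove that $Q$ itself is connected. First I would suppose, for contradiction, that $Q=A\sqcup B$ with $A,B$ nonempty, disjoint and clopen in $Q$, say $x\in A$. Because $Q$ is closed in the quasi-compact space $X$ it is quasi-compact, and therefore its closed subsets $A$ and $B$ are quasi-compact as well; they are moreover open in $Q$.

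The hard part is the separation step, since a quasi-spectral space need not be normal (as the introduction notes, $\Spec(R)$ is normal only when $R$ is Gelfand), so $A$ and $B$ cannot in general be enclosed in disjoint open sets. The point I would exploit is that $A$ and $B$ are not arbitrary disjoint closed sets but are quasi-compact and open in $Q$. Writing $A=Q\cap U_{0}$ and $B=Q\cap V_{0}$ with $U_{0},V_{0}$ open in $X$, I would use that the quasi-compact opens form a basis closed under finite intersection to shrink these to quasi-compact opens $U,V$ of $X$ with $A\subseteq U\subseteq U_{0}$ and $B\subseteq V\subseteq V_{0}$. A short computation then gives $Q\cap U=A$ and $Q\cap V=B$, whence $Q\cap U\cap V=A\cap B=\emptyset$, while $U\cap V$ is again a quasi-compact open set.

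Now $U\cup V$ is open and contains $Q$, whereas the quasi-compact set $U\cap V$ is disjoint from $Q=\bigcap\mathcal{F}$. Both $X\setminus(U\cup V)$ and $U\cap V$ are quasi-compact and covered by the open family $\{\,X\setminus K : K\in\mathcal{F}\,\}$, so quasi-compactness together with the fact that $\mathcal{F}$ is filtered produces a single $K\in\mathcal{F}$ with $K\subseteq U\cup V$ and $K\cap U\cap V=\emptyset$. Then $K\cap U$ and $K\cap V$ are disjoint open sets with union $K$, so each is clopen in $K$ and hence clopen in $X$. Finally, $K\cap U$ is a clopen set containing $x$, so $Q\subseteq K\cap U$ by definition of $Q$; but $B\subseteq Q\subseteq K\cap U$ together with $B\subseteq V$ and $(K\cap U)\cap V=\emptyset$ forces $B=\emptyset$, a contradiction. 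I expect all the difficulty to be concentrated in the normality-free separation of the second paragraph; once $A,B$ are known to be quasi-compact and open in $Q$, the rest is a routine compactness argument over the filtered family $\mathcal{F}$.
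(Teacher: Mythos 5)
Your proof is correct and follows essentially the same route as the paper's: both shrink the two pieces of the quasi-component to quasi-compact open sets $U,V$, use quasi-compactness of $U\cap V$ and of $X\setminus(U\cup V)$ against the open cover by complements of clopens containing $x$ to produce a single clopen $K$ containing $x$ with $K\subseteq U\cup V$ and $K\cap U\cap V=\emptyset$, and then observe that $K\cap U$ is clopen in $X$. The only cosmetic difference is that you finish by contradiction (from $Q\subseteq K\cap U$ one gets $B\subseteq (K\cap U)\cap V=\emptyset$), whereas the paper concludes directly that $F'$ is an intersection of clopen sets containing $x$ and hence contains the whole quasi-component.
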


This result, beside the above classical theorem (which asserts that every quasi-component of a compact space is connected), is the most general result in topology, especially in the context of connectedness of all quasi-components.

There is an important connection between the algebraic concept of ``idempotent" and the topological concept of ``clopen" at the level of affine schemes. Then we observed that this correspondence actually holds very broadly for any scheme. More precisely, we show that if $X$ is a scheme or more generally a locally ringed space and $R=\mathscr{O}_{X}(X)$ is the ring of its global sections then $\Clop(X)$, the Boolean ring of all clopen subsets of $X$, is canonically isomorphic to the Boolean ring of idempotents of $R$.

As a main application of the above results, the structure of the connected components of an important and extensive class of schemes is fully characterized:

\begin{theorem} Let $X$ be a qcqs scheme and $R=\mathscr{O}_{X}(X)$. Then the connected components of $X$ are exactly of the form $f^{-1}(C)$ where $f:X\rightarrow\Spec(R)$ is the canonical morphism and $C$ is a connected component of $\Spec(R)$. In particular, we have a natural homeomorphism $\pi_{0}(X)\simeq\pi_{0}(\Spec(R))$.  
\end{theorem}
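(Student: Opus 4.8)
The plan is to reduce the whole statement to a matching of clopen subsets on the two sides and then to invoke the connectedness result for quasi-spectral spaces. First I would recall that the underlying space of a qcqs scheme $X$ is quasi-spectral, so that the first main theorem above (every quasi-component of a quasi-spectral space is connected) applies: the quasi-components of $X$ coincide with its connected components. The same holds for $\Spec(R)$, whose underlying space is spectral and hence quasi-spectral. Thus on both $X$ and $\Spec(R)$ the connected components are exactly the quasi-components, that is, the intersections of all clopen subsets through a given point.

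The key algebraic input is the isomorphism $\Clop(X)\cong\operatorname{Idem}(R)$ of Boolean rings established above, together with the classical isomorphism $\operatorname{Idem}(R)\cong\Clop(\Spec(R))$ sending an idempotent $e$ to the clopen $D(e)=\{\mathfrak{p}\in\Spec(R):e\notin\mathfrak{p}\}$. Composing these yields a Boolean isomorphism $\Clop(\Spec(R))\cong\Clop(X)$, and the heart of the argument is to check that this isomorphism is realized geometrically by $f^{-1}$: namely, that for each idempotent $e\in R$ the clopen subset of $X$ attached to $e$ equals $f^{-1}(D(e))$. I would verify this by unwinding the construction of the canonical morphism $f$, which corresponds to $\mathrm{id}_R$ on global sections, and the way an idempotent $e$ splits the structure sheaf into $X=X_e\sqcup X_{1-e}$; since $f$ respects global sections, $f^{-1}(D(e))$ must be exactly $X_e$. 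This compatibility is where I expect the main obstacle to lie, as it requires tracking the clopen--idempotent dictionary through $f$ rather than merely comparing the two Boolean rings abstractly.

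Granting this, the conclusion follows cleanly. Every clopen subset of $X$ is $f^{-1}(C)$ for a unique clopen $C\subseteq\Spec(R)$, and conversely; since $f^{-1}$ commutes with arbitrary intersections, for any $x\in X$ with $f(x)=\mathfrak{p}$ the quasi-component of $x$ is $\bigcap_{C\ni\mathfrak{p}}f^{-1}(C)=f^{-1}\big(\bigcap_{C\ni\mathfrak{p}}C\big)$, which is precisely the $f$-preimage of the quasi-component of $\mathfrak{p}$. By the first paragraph these quasi-components are the connected components, so the connected components of $X$ are exactly the sets $f^{-1}(C)$ with $C$ a connected component of $\Spec(R)$, as claimed.

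Finally, for the homeomorphism $\pi_{0}(X)\simeq\pi_{0}(\Spec(R))$, the morphism $f$ descends to a continuous bijection on the quotient spaces by the component correspondence just established. To upgrade this to a homeomorphism I would appeal to Stone-type duality: for a quasi-spectral space the component space $\pi_{0}$ is the profinite space dual to the Boolean algebra of its clopen subsets, and because $f^{-1}$ induces the isomorphism $\Clop(\Spec(R))\cong\Clop(X)$, the induced map on $\pi_{0}$ is a homeomorphism. Alternatively one argues directly that the induced bijection is closed, using that clopen subsets match up on the two sides.
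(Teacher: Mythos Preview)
Your approach is essentially the paper's: both identify connected components with quasi-components via the quasi-spectral theorem, use the Boolean isomorphism $\Clop(\Spec(R))\to\Clop(X)$ realized by $f^{-1}$ (the compatibility $f^{-1}(D(e))=X_e$ that you flag as the main obstacle is exactly what Remark~\ref{Remark lrs 2} and Lemma~\ref{Theorem lrs 5} record), and conclude that the quasi-component of any $x\in X$ equals $f^{-1}$ of the quasi-component of $f(x)$.

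The one point you skip is the converse half of ``exactly'': your argument shows every connected component of $X$ has the form $f^{-1}(C)$ for some connected component $C$ of $\Spec(R)$ in the image of $\pi_0(f)$, but not that \emph{every} connected component $C$ of $\Spec(R)$ gives a nonempty $f^{-1}(C)$. Since $f$ need not be surjective and $C$ need not be clopen, this requires a separate step. The paper writes $C=V(I)$ for a max-regular ideal $I$ and uses quasi-compactness of $X$ together with injectivity of $e\mapsto X_e$ to rule out $f^{-1}(C)=\emptyset$. Your setup supplies this just as easily: the closed sets $f^{-1}(D)$, as $D$ ranges over clopens of $\Spec(R)$ containing a fixed $\mathfrak{p}\in C$, have the finite intersection property (any finite intersection is the preimage of a nonempty clopen, hence nonempty since $f^{-1}$ is injective on clopens), so their intersection $f^{-1}(C)$ is nonempty by quasi-compactness of $X$. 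Your Stone-duality argument for the $\pi_0$ homeomorphism is the content of Corollary~\ref{Coro qcqs cc 1}.
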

 
Next in \S 4, we make new advances in understanding the structure of the connected components of other types of schemes, especially certain projective spaces. 

In general, for an arbitrary $\mathbb{N}$-graded ring $R=\bigoplus\limits_{n\geqslant0}R_{n}$, the structure of the connected components of scheme $\Proj(R)$ is still unknown. In fact, to our knowledge, there is not even a precise formulation for this problem (it is still unclear how the form of every connected component of $\Proj(R)$ looks like). 

However, for any scheme $S$, by using the connectivity of the fibers of the structure morphism $\mathbb{P}_{S}^{n}=
\mathbb{P}_{\mathbb{Z}}^{n}\times_{\Spec(\mathbb{Z})}
S\rightarrow S$ as well as its closedness property, we obtained the following result: 

\begin{theorem} For any scheme $S$, the connected components of the projective space $\mathbb{P}_{S}^{n}$ are exactly of the form $\mathbb{P}_{C}^{n}$
where $C$ is a connected component of $S$ which is equipped with a closed subscheme structure.
\end{theorem}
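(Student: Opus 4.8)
The plan is to reduce the entire statement to a purely topological fact about the structure morphism $p:\mathbb{P}^{n}_{S}\rightarrow S$, and then to promote the resulting topological description of the connected components to a statement about closed subschemes by a base-change argument. The three properties of $p$ that I would invoke are: $p$ is surjective (each fiber is nonempty); $p$ is a closed map (since $\mathbb{P}^{n}_{\mathbb{Z}}\rightarrow\Spec(\mathbb{Z})$ is proper and universal closedness is stable under base change); and every fiber of $p$ is connected. For the last point, the set-theoretic fiber over a point $s\in S$ is the underlying space of the scheme-theoretic fiber $\mathbb{P}^{n}_{\kappa(s)}$, which is integral (it is $\Proj$ of a polynomial ring over the field $\kappa(s)$), hence irreducible, hence connected.

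The key ingredient is a general topological lemma: if $f:X\rightarrow Y$ is a continuous, closed, surjective map all of whose fibers are connected, then the connected components of $X$ are exactly the preimages $f^{-1}(C)$ of the connected components $C$ of $Y$. I would prove this as follows. Fix a connected component $C$ of $Y$; it is closed, so $f^{-1}(C)$ is closed in $X$. Suppose for contradiction that $f^{-1}(C)=U\sqcup V$ is a separation into disjoint nonempty relatively clopen pieces; being relatively clopen in a closed subset, $U$ and $V$ are closed in $X$. Since each fiber $f^{-1}(y)$ with $y\in C$ is connected, it lies entirely in $U$ or entirely in $V$. Setting $C_{U}=f(U)$ and $C_{V}=f(V)$, the closedness of $f$ makes both closed in $Y$; they are nonempty, disjoint (a common point would have its fiber meeting both $U$ and $V$), and cover $C$ by surjectivity. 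This separates the connected set $C$, a contradiction, so $f^{-1}(C)$ is connected. Conversely, for any connected $A\subseteq X$ the image $f(A)$ is connected and hence lies in a single component $C$, forcing $A\subseteq f^{-1}(C)$; thus the connected and pairwise disjoint sets $f^{-1}(C)$ are precisely the maximal connected subsets, i.e. the connected components.

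Applying the lemma to $p$ identifies the connected components of $\mathbb{P}^{n}_{S}$, as topological spaces, with the sets $p^{-1}(C)$, where $C$ runs over the connected components of $S$. To upgrade this to the scheme-theoretic statement, I would equip each connected component $C$ of $S$ with a closed subscheme structure, the reduced induced structure being a natural canonical choice, so that the inclusion $\iota:C\hookrightarrow S$ is a closed immersion. Base change along $\iota$ yields a closed immersion $\mathbb{P}^{n}_{C}=C\times_{S}\mathbb{P}^{n}_{S}\hookrightarrow\mathbb{P}^{n}_{S}$ (closed immersions are stable under base change), whose underlying space is exactly $p^{-1}(C)$. Hence each connected component of $\mathbb{P}^{n}_{S}$ carries a closed subscheme structure making it isomorphic to $\mathbb{P}^{n}_{C}$ for the corresponding component $C$ of $S$.

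I expect the heart of the argument, and the only real obstacle, to be the topological lemma, specifically the step in which a separation of $f^{-1}(C)$ is pushed down to a separation of the base $C$. This is exactly where all three hypotheses on $p$ must be used simultaneously: connectedness of the fibers forces each fiber into one half of a would-be separation, closedness of $p$ guarantees that the images $C_{U}$ and $C_{V}$ remain closed in $Y$, and surjectivity guarantees that they actually cover $C$. By contrast, the verification of the three properties of $p$ and the base-change identification of the scheme structure are routine, the only mild care being to record that the set-theoretic fibers of $p$ coincide with the underlying spaces of the scheme-theoretic fibers $\mathbb{P}^{n}_{\kappa(s)}$.
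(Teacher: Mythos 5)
Your proposal is correct and takes essentially the same route as the paper: your topological lemma is precisely the paper's Lemma 4.1 (closed surjective map with connected fibers, applied in its closed-map case), the fibers are identified with the irreducible spaces $\mathbb{P}^{n}_{\kappa(s)}$ exactly as you do, closedness comes from universal closedness of $\mathbb{P}^{n}_{\mathbb{Z}}\rightarrow\Spec(\mathbb{Z})$, and the scheme-theoretic upgrade via base change along the closed immersion $C\hookrightarrow S$ is the paper's Remark 3.13. The only cosmetic difference is in the proof of the lemma, where you push a separation of $f^{-1}(C)$ down to a separation of $C$, while the paper shows that images of clopen sets are clopen and then restricts the map to $f^{-1}(C)\rightarrow C$; these are the same argument in different clothing.
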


In particular, for any ring $R$, the connected components of $\mathbb{P}_{R}^{n}=\Proj R[x_{0},\ldots,x_{n}]$ are exactly of the form $\mathbb{P}_{R/I}^{n}=
\Proj(R/I)[x_{0},\ldots,x_{n}]$ where $\Spec(R/I)=V(I)$ is a connected component of $\Spec(R)$.

Then in \S5, we characterize the finiteness of the number of connected components of general topological spaces. Among the main results, we prove that a quasi-compact space $X$ has finitely many connected components if and only if there is an isomorphism of (Boolean) rings $\Clop(X)\simeq(\mathbb{Z}/2)^{\kappa}$ where $\kappa$ is the cardinal of the connected components of $X$ (note that $\kappa$ is not assumed to be finite here). 

Examining the connected components of spaces also leads us to an interesting result which, in particular, asserts that a commutative (nonzero) ring $R$ has finitely many idempotents if and only if there is an isomorphism of rings $R\simeq\prod\limits_{i=1}^{n}R/(1-e_{i})$, or equivalently,  $R\simeq\prod\limits_{i\in\kappa}R/(1-e_{i})$ with $e_i$ is a primitive idempotent of $R$ and $\kappa$ is
the cardinal (possibly infinite) of the connected components of $\Spec(R)$.

\section{Preliminaries}

We collect in this section some basic background for the reader’s convenience.

Let $X$ be a topological space. By $\pi_0(X)$ we mean the set of connected components of $X$ equipped with the quotient topology (which is a totally disconnected space).  
By $\Clop(X)$ we mean the set of all clopen (both open and closed) subsets of $X$. It can be seen that $\Clop(X)$ is a Boolean ring with addition $A+B=(A\cup B)\setminus(A\cap B)$ and multiplication $A\cdot B=A\cap B$. In fact, $\Clop(X)$ is a subring of the power set ring $\mathscr{P}(X)$. If $f:X\rightarrow Y$ is a continuous map of topological spaces, then the map $\Clop(f):\Clop(Y)\rightarrow\Clop(X)$ given by $V\mapsto f^{-1}(V)$ is a morphism of Boolean rings. These define a contravariant functor from the category of topological spaces to the category of Boolean rings 
(for more information see e.g. \cite{Tarizadeh}). 

The intersection of all clopen subsets of a topological space $X$ containing a point $x\in X$ is called the \emph{quasi-component} of that point and we denote it by $q(x)$. It is clear that if $y\in q(x)$, then $q(x)=q(y)$. In fact, the $q(x)$ with $x\in X$ are indeed the equivalence classes of the following equivalence relation (defined over $X$): We say that $x\sim y$ if $f(x)=f(y)$ for any continuous function $f:X\rightarrow\{0,1\}$ where $\{0,1\}$ is equipped with the discrete topology.

It is clear that a topological space $X$ is connected if and only if $X$ is nonempty and has no  nontrivial clopens, or equivalently, the Boolean ring $\Clop(X)$ is isomorphic to $\mathbb{Z}/2=\{0,1\}$, the ring of integers modulo 2. 

In a topological space, a connected subset is contained in a clopen if and only if they meet each other. In particular, every connected component $C=[x]$ of a topological space $X$  is contained in the quasi-component $q(x)$. In fact,  every quasi-component of a topological space $X$ is the disjoint union of the connected components of $X$ that meet it. 

By irreducible space we mean a nonempty topological space such that it cannot be written as the union of two proper closed subsets. Every maximal element of the set of irreducible subsets of a topological space $X$ is called an irreducible component of $X$. It is clear that every irreducible space is connected.  

In this article, all rings are assumed to be commutative. If $r$ is an element of a ring $R$, then $D(r)=\{\mathfrak{p}\in\Spec(R): r\notin\mathfrak{p}\}$ and $V(r)=\Spec(R)\setminus D(r)$. 

\section{Connected components of qcqs schemes}

For the first main result of this section we need the following notion:

\begin{definition} By \emph{quasi-spectral space} we mean a quasi-compact space $X$ with a basis consisting of  quasi-compact open subsets of $X$ such that the intersection of any two members of the basis is quasi-compact. 
\end{definition}

We know that the union of every finitely many quasi-compact subspaces of a topological space is quasi-compact. Hence, the intersection of any two quasi-compact open subsets of a quasi-spectral space is quasi-compact. 

By spectral space we mean a quasi-spectral space that is also sober, i.e., every irreducible closed subset has a unique generic point.
 
For any commutative ring $R$, the space $\Spec(R)$ is spectral. More generally, the underlying space of a scheme $X$ is spectral if and only if $X$ is a qcqs scheme (see the proof of Corollary \ref{Coro iv}).

\begin{example} Here we give examples of quasi-spectral spaces that are not spectral. Every infinite set $X$ equipped with the cofinite topology is a quasi-spectral space, but not spectral. Indeed, every open subset of $X$ is quasi-compact. This shows that $X$ is quasi-spectral. However, the whole space $X$ is irreducible closed with no generic point. 
Hence, $X$ is not spectral. \\
Every finite space is also quasi-spectral, but not necessarily spectral. For instance, $X=\{1,2,3\}$ with the topology $T=\{\emptyset,X, \{1,2\}\}$ is not a sober space, because $\overline{\{1\}}=X=\overline{\{2\}}$. \\
For the third example, first recall that by primary ideal we mean a proper ideal $I$ of a ring $R$ such that if $ab\in I$ with $a\notin\sqrt{I}$ for some $a,b\in R$, then $b\in I$. \\  
Let $X$ be the set of all primary ideals of a ring $R$. It is clear that $\Spec(R)\subseteq X$ and we have a map $X\rightarrow\Spec(R)$ given by $I\mapsto\sqrt{I}$. So there exists a (unique) topology over $X$ such that its basis opens are of the form $U_r=\{I\in X: r\notin\sqrt{I}\}$ with $r\in R$. It is clear that $U_1=X$ and $U_r\cap U_{r'}=U_{rr'}$ for all $r,r'\in R$. Every basis open $U_{r}$ is quasi-compact, because if $U_r=\bigcup\limits_{k\in S}U_{r_{k}}$ then $r\in\sqrt{(r_k: k\in S)}$ and so there exists some finite subset $S'\subseteq S$ with $r\in\sqrt{(r_k: k\in S')}$, this yields that $U_r=\bigcup\limits_{k\in S'}U_{r_{k}}$ and so $U_r$ is quasi-compact. Hence, $X$ is quasi-spectral. The closed subsets of $X$ are precisely of the form $F(J)=\{I\in X:J\subseteq\sqrt{I}\}$ where $J$ is an ideal of $R$. For any point $I\in X$ we have $\overline{\{I\}}=F(I)$. If $M$ is a maximal ideal of $R$ then $\overline{\{M\}}$ is the set of all ideals $I$ of $R$ with $\sqrt{I}=M$. In particular, $M^n\in \overline{\{M\}}$ for all $n\geqslant1$. If $M^{d}\neq M^{d+1}$ for some $d\geqslant1$, then they are distinct generic points of $\overline{\{M\}}$.
Hence, $X$ is not necessarily spectral. 
It can be also seen that every irreducible closed subset of $X$ has a generic point. The space $X$ in general does not necessarily have a closed point. If $I$ is a closed point of $X$ then $I$ is a maximal ideal of $R$. But we observed that the converse does not hold. If $R$ is a von-Neumann regular ring, then the closed points of $X$ are precisely the prime (maximal) ideals of $R$.     
\end{example}

Now we prove the first main result of this article: 

\begin{theorem}\label{Theorem I T-D-F} Every quasi-component of a quasi-spectral space is connected. 
\end{theorem}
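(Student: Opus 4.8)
The plan is to fix a point $x$ and show that its quasi-component $q(x)$ admits no nontrivial separation; by the remarks of \S2 (every connected component $[x]$ is contained in $q(x)$, and $q(x)$ is the disjoint union of the components meeting it) this forces $q(x)=[x]$ to be connected. So suppose for contradiction that $q(x)=A\sqcup B$ with $A,B$ nonempty, disjoint and relatively clopen in $q(x)$, say $x\in A$. Since $q(x)$ is an intersection of clopen sets it is closed in $X$, so $A$ and $B$ are closed in $X$ and hence quasi-compact. The whole proof reduces to manufacturing from this data a clopen subset $U$ of $X$ with $x\in U$, $A\subseteq U$ and $U\cap B=\varnothing$: once this is done we are finished, because $U$ is then a clopen containing $x$, whence $q(x)\subseteq U$, while $B\subseteq q(x)$ and $U\cap B=\varnothing$ force $B=\varnothing$, a contradiction.

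The model to imitate is the classical compact Hausdorff argument, which uses normality to enclose $A$ and $B$ in disjoint open sets and then extracts a clopen neighbourhood of $x$ by quasi-compactness. The main obstacle is exactly that a quasi-spectral space need not be Hausdorff or normal, so disjoint closed sets cannot in general be separated by disjoint opens; moreover one cannot fall back on the patch (constructible) topology, since for a quasi-spectral space it need not even be quasi-compact. My replacement for normality is the basis of quasi-compact opens. Because $A\subseteq X\setminus B$ is covered by basic quasi-compact opens contained in $X\setminus B$, quasi-compactness of $A$ yields a quasi-compact open $G$ with $A\subseteq G$ and $G\cap B=\varnothing$; symmetrically a quasi-compact open $H$ with $B\subseteq H$ and $H\cap A=\varnothing$. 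These $G,H$ need not be disjoint, but their intersection $G\cap H$ is disjoint from $A\cup B=q(x)$ and, by the quasi-spectral axiom, is again quasi-compact.

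The heart of the argument is then a double application of quasi-compactness against the family $\mathcal{F}$ of clopen sets containing $x$, which is closed under finite intersection and satisfies $\bigcap\mathcal{F}=q(x)$. First, the closed quasi-compact set $X\setminus(G\cup H)$ is disjoint from $\bigcap\mathcal{F}$, so a finite subfamily intersects to a clopen $W_0\ni x$ with $W_0\subseteq G\cup H$. Second, the quasi-compact set $G\cap H$ is disjoint from $\bigcap\mathcal{F}$, so covering it by the clopens $X\setminus W$ with $W\in\mathcal{F}$ and passing to a finite subcover produces a clopen $W_1\ni x$ with $W_1\cap(G\cap H)=\varnothing$. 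Putting $W:=W_0\cap W_1$, which is clopen and contains $x$, gives $W\subseteq G\cup H$ together with $W\cap G\cap H=\varnothing$; hence $W\cap G$ and $W\cap H$ are disjoint open sets partitioning the clopen $W$, so $U:=W\cap G$ has open complement $(X\setminus W)\cup(W\cap H)$ and is therefore clopen in $X$.

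Finally, since $x\in A\subseteq G$ and $x\in W$ we have $x\in U$, while $A\subseteq U$ and $U\cap B\subseteq G\cap B=\varnothing$; this is precisely the clopen separation sought in the first paragraph, yielding the desired contradiction and proving that $q(x)$ is connected. I expect the only delicate points to be the routine verifications, all underwritten by the quasi-spectral hypothesis, that finite unions and pairwise intersections of quasi-compact opens stay quasi-compact, and the careful bookkeeping ensuring that $U=W\cap G$ is clopen in $X$ and not merely relatively clopen in $W$.
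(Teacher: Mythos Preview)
Your proof is correct and follows essentially the same route as the paper's: your $A,B$ are the paper's $F',F''$, your quasi-compact opens $G,H$ are the paper's $U,V$, your $W_0,W_1$ are the paper's $\bigcap_k B_k,\bigcap_i A_i$, and your clopen $U=W\cap G$ is the paper's $W\cap U$. The only cosmetic difference is that you phrase the endgame as a direct contradiction from $q(x)\subseteq U$ and $U\cap B=\varnothing$, whereas the paper reformulates it as exhibiting $F'$ as an intersection of clopens containing $x$.
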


\begin{proof} Let $X$ be a quasi-spectral space and let $F=\bigcap\limits_{\substack{A\in\Clop(X),
\\x\in A}}A$ be a quasi-component of $X$, i.e., $F$ is the intersection of all clopen subsets of $X$ containing a point $x\in X$. Suppose that $F$ is the union of disjoint closed subsets $F'$ and $F''$. We may assume $x\in F'$. It will be enough to show that $F'$ is the intersection of some clopen subsets of $X$ containing $x$ (if the case, then we will have $F\subseteq F'$ and so $F''$ will be empty). Clearly $F'$ is an open subset of $F$ and so $F'=F\cap U'$ for some open subset $U'$ of $X$. We may write $U'=\bigcup\limits_{i}U_i$ where each $U_i$ is a basis open and so is a quasi-compact open subset of $X$. 
But $F$ and so $F'$ are closed subsets of $X$, and hence $F'$ is quasi-compact in $X$. Thus $F'$ is contained in the union of finitely many of the $U_i$, i.e., $F'\subseteq \bigcup\limits_{i=1}^{n}U_i$. But $U=\bigcup\limits_{i=1}^{n}U_i$ is a quasi-compact open subset of $X$ and $F'=F\cap U$. Similarly, there exists a quasi-compact open subset $V$ in $X$ with $F''=F\cap V$. We have $F\cap U\cap V=F'\cap F''=\emptyset$ and so $U\cap V\subseteq X\setminus F=\bigcup\limits_{\substack{A\in\Clop(X),
\\x\in A}}(X\setminus A)$. By hypothesis, $U\cap V$ is quasi-compact, so there exist finitely many clopens $A_{1},\ldots, A_{m}$ in $X$ containing $x$ such that $U\cap V\subseteq\bigcup\limits_{i=1}^{m}(X\setminus A_i)$. This yields that $(\bigcap\limits_{i=1}^{m}A_i)\cap U\cap V=\emptyset$. We also have $F\subseteq U\cup V$. But $X\setminus(U\cup V)$ is a closed subset of $X$ and hence is quasi-compact. Thus there exist finitely many clopens $B_{1},\ldots, B_{n}$ in $X$ containing $x$ such that $\bigcap\limits_{k=1}^{n}B_k\subseteq U\cup V$. Then the clopen $W=(\bigcap\limits_{i=1}^{m}A_i)
\cap(\bigcap\limits_{k=1}^{n}B_k)$ is the union of disjoint open subsets $W\cap U$ and $W\cap V$. Hence, $W\cap U$ is a closed subset of $W$ and so it is a closed subset of $X$. Thus $W\cap U$ is a clopen subset of $X$ containing $x$. Then $F'=\bigcap\limits_{\substack{A\in\Clop(X),
\\x\in A}}(A\cap W\cap U)$ is the intersection of the clopen subsets $A\cap W\cap U$ of $X$ containing $x$. This completes the proof. 
\end{proof} 

The following result is one of the important applications of Theorem \ref{Theorem I T-D-F}.

\begin{corollary}\label{Coro iv} Every quasi-component of a qcqs scheme is connected. 
\end{corollary}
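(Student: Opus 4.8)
The plan is to deduce this corollary directly from Theorem \ref{Theorem I T-D-F} by showing that the underlying topological space of a qcqs scheme is quasi-spectral. Once that reduction is in place, the conclusion is immediate: every quasi-component of a quasi-spectral space is connected, hence so is every quasi-component of a qcqs scheme.

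First I would verify that a qcqs scheme $X$ satisfies the three conditions in the definition of quasi-spectral space. Quasi-compactness of $X$ is part of the hypothesis. For the basis condition, recall that a scheme has a basis consisting of affine open subschemes $\Spec(A_i)$; each such affine open is itself quasi-compact (since $\Spec(A_i)$ is quasi-compact as an affine scheme), so $X$ admits a basis of quasi-compact open subsets. The remaining and most delicate point is that the intersection of any two members of this basis is again quasi-compact. This is exactly where the quasi-separatedness hypothesis enters: by definition, $X$ is quasi-separated precisely when the intersection of any two quasi-compact open subsets is quasi-compact, which gives the required stability of the basis under pairwise intersection.

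Having checked the three conditions, I would conclude that $X$ is a quasi-spectral space, and then invoke Theorem \ref{Theorem I T-D-F} to obtain that every quasi-component of $X$ is connected. It is worth remarking that the same verification simultaneously shows that a qcqs scheme is in fact spectral, since the underlying space of any scheme is sober (irreducible closed subsets have unique generic points, corresponding to the generic points of integral closed subschemes); this is the claim referenced in the Preliminaries, but the sobriety is not needed for the present corollary, as quasi-spectrality alone suffices to apply the theorem.

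The main obstacle I anticipate is pinning down the pairwise-intersection condition cleanly. The subtlety is that one must intersect two \emph{basis} opens (affines), and the quasi-separatedness axiom as usually stated concerns arbitrary quasi-compact opens; I would make sure the affine basis opens are genuinely quasi-compact and then apply quasi-separatedness to this pair. Everything else is a routine unwinding of definitions, so the reduction to Theorem \ref{Theorem I T-D-F} carries essentially all the weight.
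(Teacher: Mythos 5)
Your argument is correct and matches the paper's proof essentially verbatim: both verify that a qcqs scheme is quasi-spectral via the basis of (quasi-compact) affine opens together with the characterization of quasi-separatedness as stability of quasi-compact opens under pairwise intersection, and then invoke Theorem \ref{Theorem I T-D-F}. Your side remark that sobriety (hence spectrality) holds but is not needed is also consistent with the paper, which notes the space is in fact spectral.
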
 

\begin{proof} The underlying space of every scheme has a basis consisting of affine opens, and every affine scheme is quasi-compact. It is well known that a scheme $X$ is quasi-separated if and only if the intersection of any two quasi-compact (affine) open subsets of $X$ is quasi-compact. For its proof see \cite[Prop 1.2.7 ]{Grothendieck}. Hence, the underlying space of a qcqs scheme is spectral. Then the assertion follows from Theorem \ref{Theorem I T-D-F}. 
\end{proof}

One of the main results of \cite[Theorem 6]{Hochster} (see also \cite[\S 12.6.12]{Dickmann}) asserts that every spectral space is homeomorphic to the prime spectrum of a commutative ring, but this is quite difficult to prove. However, without needing this result, we obtain the following conclusion:

\begin{corollary}\label{Corollary I} Every quasi-component of a spectral space is connected. 
\end{corollary}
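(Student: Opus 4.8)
The plan is to observe that this corollary is an immediate specialization of the topological result already proved. According to the definition recalled just after Definition~3.1, a spectral space is \emph{by definition} a quasi-spectral space that additionally satisfies the sobriety axiom (every irreducible closed subset has a unique generic point). In particular, every spectral space is in particular a quasi-spectral space. Thus I would simply invoke Theorem~\ref{Theorem I T-D-F}: since its hypothesis requires only quasi-spectrality, and a spectral space certainly meets that hypothesis, every quasi-component of a spectral space is connected.

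Concretely, the only step is to note the definitional containment $\{\text{spectral spaces}\}\subseteq\{\text{quasi-spectral spaces}\}$ and then apply Theorem~\ref{Theorem I T-D-F} verbatim. No sobriety is needed in the conclusion, and indeed none was used in the proof of Theorem~\ref{Theorem I T-D-F}, so the stronger axiom simply plays no role here.

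The point worth stressing — and the reason the statement is recorded separately rather than silently absorbed — is contained in the lead-in remark: one could alternatively deduce the spectral case from the affine case via Hochster's theorem, which asserts that every spectral space is homeomorphic to $\Spec(R)$ for some commutative ring $R$. That route, however, relies on a deep and technically demanding representation theorem. The present approach bypasses it entirely. Consequently there is no real obstacle to overcome: the ostensibly hard part (handling arbitrary spectral spaces) dissolves once one recognizes that Theorem~\ref{Theorem I T-D-F} was proved at precisely the level of generality that already encompasses them.
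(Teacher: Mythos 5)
Your proposal is correct and matches the paper's proof exactly: the paper also derives this corollary as an immediate consequence of Theorem \ref{Theorem I T-D-F}, since every spectral space is by definition quasi-spectral. Your side remark about bypassing Hochster's representation theorem likewise mirrors the paper's own comment preceding the corollary.
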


\begin{proof} It follows immediately from Theorem \ref{Theorem I T-D-F}. 
\end{proof}

\begin{remark}\label{Remark lrs 2} Recall that if $X$ is a locally ringed space then for any   $s\in R=\mathscr{O}_{X}(X)$ the set $X_{s}=\{x\in X: s_{x}\notin\mathfrak{m}_{x}\}$ is an open subset of $X$ and $s_{|X_{s}}$ is an invertible element of the ring $\mathscr{O}_{X}(X_{s})$ where $s_{x}$ is the germ of $s$ at the point $x$ and 
$\mathfrak{m}_{x}$ is the maximal ideal of the local ring $\mathscr{O}_{X,x}$. 
There is also a canonical morphism of locally ringed spaces $f:X\rightarrow\Spec(R)$ such that the map between the underlying spaces is given by $f(x)=\mathfrak{p}_{x}$ where $\mathfrak{p}_{x}$ is the contraction of $\mathfrak{m}_{x}$ under the canonical ring map $R=\mathscr{O}_{X}(X)\rightarrow\mathscr{O}_{X,x}$. The continuity of $f$ follows from the fact that $f^{-1}(D(s))=X_{s}$ for all $s\in R$. Then  $X_{s}\cap X_{t}=X_{st}$ for all $s,t\in R$. In particular, if $s$ is nilpotent then $X_{s}=\emptyset$. It can be seen that $s\in R$ is invertible if and only if $X_{s}=X$.
\end{remark}

Recall that for any (commutative) ring $R$, the set of its idempotents $\mathscr{B}(R)=\{e\in R: e=e^{2}\}$ with the addition $e\oplus e'=e+e'-2ee'$ and multiplication $e\cdot e'=ee'$ is a ring. We call $\mathscr{B}(R)$ the Boolean ring of $R$. Note that $\mathscr{B}(R)$ is a subset of $R$, but in general $\mathscr{B}(R)$ is not a subring of $R$.  

There is an important connection between the algebraic concept of idempotent and the topological concept of clopen, which is widely used in commutative algebra and algebraic geometry. This result asserts that the idempotents of a commutative ring $R$ are naturally in one-to-one correspondence with the clopen subsets of $\Spec(R)$. In fact, the map $\mathscr{B}(R)\rightarrow\Clop(\Spec(R))$ given by $e\mapsto D(e)$ is an isomorphism of Boolean rings (\cite[Theorem 3.1]{Tarizadeh}). The following lemma generalizes this result to any scheme, even any locally ringed space:   

\begin{lemma}\label{Theorem lrs 5} Let $X$ be a locally ringed space and $R=\mathscr{O}_{X}(X)$. Then the map $\mathscr{B}(R)\rightarrow\Clop(X)$ given by $s\mapsto X_{s}$ is an isomorphism of Boolean rings.  
\end{lemma}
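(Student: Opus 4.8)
The plan is to show that the map $\beta\colon\mathscr{B}(R)\to\Clop(X)$, $s\mapsto X_s$, is a well-defined bijective homomorphism of Boolean rings, extracting everything from two ingredients: a local ring has no idempotents other than $0$ and $1$, and the sheaf axioms for $\mathscr{O}_X$. The starting observation is that for an idempotent $s\in R$ and any $x\in X$ the germ $s_x$ is an idempotent of the local ring $\mathscr{O}_{X,x}$, hence $s_x\in\{0,1\}$; consequently $x\in X_s$ exactly when $s_x=1$ (a unit) and $x\notin X_s$ exactly when $s_x=0\in\mathfrak{m}_x$. Applying this to $1-s$ gives $X_{1-s}=X\setminus X_s$, and since both $X_s$ and $X_{1-s}$ are open by Remark \ref{Remark lrs 2}, the set $X_s$ is clopen; so $\beta$ is well defined.

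Next I would check that $\beta$ respects the Boolean operations. For multiplication, using that $\mathfrak{m}_x$ is prime, $(ss')_x=s_x s'_x\notin\mathfrak{m}_x$ iff both $s_x\notin\mathfrak{m}_x$ and $s'_x\notin\mathfrak{m}_x$, so $X_{ss'}=X_s\cap X_{s'}$ (this even holds for arbitrary $s,s'\in R$). Clearly $X_1=X$, since $1_x$ is always a unit. For the sum $s\oplus s'=s+s'-2ss'$ I would compute germ by germ: with $s_x,s'_x\in\{0,1\}$, the germ $(s\oplus s')_x=s_x+s'_x-2s_xs'_x$ equals $1$ precisely when exactly one of $s_x,s'_x$ equals $1$, which says $X_{s\oplus s'}=(X_s\cup X_{s'})\setminus(X_s\cap X_{s'})$, the sum of $X_s$ and $X_{s'}$ in $\Clop(X)$. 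Thus $\beta$ is a homomorphism of Boolean rings.

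For injectivity it suffices to prove that the kernel is trivial, i.e. that $X_s=\emptyset$ forces $s=0$. If $X_s=\emptyset$ then $s_x=0$ for every $x$, so $s$ has vanishing germ everywhere, and the separation (identity) axiom of the sheaf $\mathscr{O}_X$ forces $s=0$. For surjectivity, given a clopen $A$ with complement $B=X\setminus A$, the sets $A,B$ are open, disjoint, and cover $X$, so by the gluing axiom the sections $1\in\mathscr{O}_X(A)$ and $0\in\mathscr{O}_X(B)$ glue to a unique $s\in R$ with $s_{|A}=1$ and $s_{|B}=0$. Comparing restrictions shows $s^2-s$ vanishes on $A$ and on $B$, hence $s^2=s$ is idempotent, while $s_x$ is a unit for $x\in A$ and zero for $x\in B$, giving $X_s=A$. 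Therefore $\beta$ is surjective, which completes the proof.

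The homomorphism verifications are routine; the only genuinely structural input, and hence the place to be careful, is the passage between global sections and germs through the sheaf axioms: vanishing of all germs forces the section to be zero (injectivity), and the compatible local data $1$ and $0$ glue to a global idempotent (surjectivity). I do not expect a serious obstacle beyond organizing these two sheaf-theoretic steps cleanly; the triviality of idempotents in a local ring is exactly what makes the stalkwise bookkeeping transparent.
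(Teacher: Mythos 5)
Your proof is correct. It takes a route that differs from the paper's in two places, though the sheaf-theoretic core is the same. For well-definedness and the homomorphism property, you argue stalkwise: idempotent germs in the local rings $\mathscr{O}_{X,x}$ are $0$ or $1$, so $X_s=\{x: s_x=1\}$, $X_{1-s}=X\setminus X_s$, and the compatibility with $\cdot$ and $\oplus$ is a germ-by-germ computation (using primeness of $\mathfrak{m}_x$). The paper instead gets all of this for free by factoring the map as the composition of the affine isomorphism $\mathscr{B}(R)\simeq\Clop(\Spec(R))$, $s\mapsto D(s)$ (cited from earlier work), with $\Clop(f)$ for the canonical morphism $f:X\rightarrow\Spec(R)$ of Remark \ref{Remark lrs 2}, since $X_s=f^{-1}(D(s))$. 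For injectivity, you use that a Boolean ring homomorphism is injective once its kernel is trivial, and show $X_s=\emptyset$ forces all germs of $s$ to vanish, hence $s=0$ by the identity axiom; the paper proves $X_s=X_t\Rightarrow s=t$ directly by the same stalk dichotomy (invertible idempotents equal $1$, idempotents in $\mathfrak{m}_x$ equal $0$) plus the identity axiom. The surjectivity arguments are essentially identical: gluing $1$ on the clopen set and $0$ on its complement. What your version buys is self-containedness --- it needs neither the cited affine result nor the canonical morphism $X\rightarrow\Spec(R)$ --- at the cost of more routine verification; the paper's version is shorter and emphasizes that the lemma literally extends the affine correspondence along $f$. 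One small point worth making explicit in your write-up: the kernel-triviality criterion for injectivity is legitimate only after the homomorphism property is established, which your ordering does respect.
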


\begin{proof} First we show that the above map is indeed a morphism of rings. For any ring $R$, it can be seen that the map $\mathscr{B}(R)\rightarrow\Clop(\Spec(R))$ given by $s\mapsto D(s)$ is a morphism of rings (see \cite[Theorem 3.1]{Tarizadeh}). The induced map $\Clop(f):\Clop(\Spec(R))\rightarrow\Clop(X)$ is also a ring map where $f:X\rightarrow\Spec(R)$ is the canonical morphism. Hence, their composition $\mathscr{B}(R)\rightarrow\Clop(X)$ given by $s\mapsto X_{s}$ is a ring map. \\
For injectivity, assume $U:=X_{s}=X_{t}$ for some idempotents $s,t\in R$. Then by Remark \ref{Remark lrs 2}, the idempotents $s_{|U}$ and $t_{|U}$ are invertible in $\mathscr{O}_{X}(U)$ and so $s_{|U}=1=t_{|U}$. If $x\in X\setminus U$ then the germs $s_{x}$ and $t_{x}$ are idempotent elements of the maximal ideal of the local ring $\mathscr{O}_{X,x}$ and so $s_{x}=0=t_{x}$. This shows that for each $x\in X\setminus U$ there exists an open neighbourhood $V_{x}\subseteq X$ of $x$ such that $s_{|V_{x}}=0=t_{|V_{x}}$. It is clear that $U$ together with $\{V_{x}: x\in X\setminus U\}$ is an open covering of $X$. This shows that $s=t$. \\
To prove surjectivity, take a clopen subset $V$ in $X$. Then the ring map $R=\mathscr{O}_{X}(X)\rightarrow\mathscr{O}_{X}(V)\times
\mathscr{O}_{X}(V^{c})$ given by $s\mapsto (s_{|V}, s_{|V^{c}})$ is an isomorphism of rings where $V^{c}=X\setminus V$. So there is an idempotent $t\in R$ such that $t_{|V}=1$  and $t_{|V^{c}}=0$. Then, using Remark \ref{Remark lrs 2}, it can be easily seen that $V=f^{-1}(D(t))=X_{t}$. This completes the proof.    
\end{proof}

\begin{corollary}\label{Coro lrs 9} Let $X$ be a locally ringed space and $R=\mathscr{O}_{X}(X)$. Then $X$ is connected if and only if $\Spec(R)$ is connected.
\end{corollary}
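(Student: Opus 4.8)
The plan is to exploit the isomorphism of Boolean rings $\mathscr{B}(R)\simeq\Clop(X)$ established in Lemma~\ref{Theorem lrs 5}, together with the classical fact that $\mathscr{B}(R)\simeq\Clop(\Spec(R))$ via $e\mapsto D(e)$ (cited as \cite[Theorem 3.1]{Tarizadeh}). Composing these two isomorphisms gives an isomorphism of Boolean rings $\Clop(X)\simeq\Clop(\Spec(R))$, and the statement then follows from the elementary characterization of connectedness in terms of clopens recalled in the Preliminaries: a nonempty space is connected if and only if it has no nontrivial clopen subsets.

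First I would observe that both $X$ and $\Spec(R)$ are nonempty simultaneously; indeed $X=\emptyset$ forces $R=\mathscr{O}_X(X)$ to be the zero ring (the global sections of the empty space), and $\Spec(R)=\emptyset$ exactly when $R=0$, so the two spaces are empty together. Having disposed of the trivial case, I would note that under the composite isomorphism $\Clop(X)\simeq\mathscr{B}(R)\simeq\Clop(\Spec(R))$ the Boolean ring $\Clop(X)$ is isomorphic to $\Clop(\Spec(R))$, and in particular the two have the same number of idempotents (equivalently, the same cardinality). A space $Y$ is connected precisely when $\Clop(Y)=\{\emptyset,Y\}$, i.e.\ when $\Clop(Y)$ is the two-element Boolean ring $\mathbb{Z}/2$. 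Since $\Clop(X)\simeq\Clop(\Spec(R))$ as Boolean rings, one has $\Clop(X)\simeq\mathbb{Z}/2$ if and only if $\Clop(\Spec(R))\simeq\mathbb{Z}/2$, which is exactly the assertion that $X$ is connected if and only if $\Spec(R)$ is connected.

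I do not anticipate a genuine obstacle here, since the corollary is essentially a formal consequence of Lemma~\ref{Theorem lrs 5}; the only point requiring a moment's care is the bookkeeping around the empty space and the zero ring, so that the phrase ``connected'' (which by convention excludes the empty set) is handled correctly on both sides. The substantive content has already been carried out in the lemma, whose proof of injectivity and surjectivity is where the locally-ringed-space structure is actually used.
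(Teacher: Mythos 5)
Your proposal is correct and follows exactly the paper's route: the paper proves this corollary as an immediate consequence of Lemma~\ref{Theorem lrs 5}, and your argument simply spells out that deduction (composing $\Clop(X)\simeq\mathscr{B}(R)\simeq\Clop(\Spec(R))$ and characterizing connectedness by $\Clop(Y)\simeq\mathbb{Z}/2$, which also disposes of the empty case automatically). No gaps; your write-up just makes explicit what the paper leaves as ``immediate.''
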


\begin{proof} This is an immediate consequence of Lemma \ref{Theorem lrs 5}. 
\end{proof}

In this article, by regular ideal of a ring $R$ we mean an ideal of $R$ that is generated by a set of idempotent elements of $R$. Every finitely generated regular ideal is a principal ideal (generated by an idempotent element). 

Every maximal element of the set of proper regular ideals of $R$ is called a max-regular ideal of $R$. 

If $P$ is a prime ideal of $R$ then the ideal $P^{\ast}=(e\in P: e=e^2)$ generated by all idempotent elements in $P$ is a max-regular ideal of $R$. 
This, in particular, shows that every proper regular ideal of $R$ is contained in a max-regular ideal of $R$. It can be also seen that every max-regular ideal of $R$ is of the form $P^{\ast}$ for some prime ideal $P$ of $R$.
We have then the following results:

\begin{lemma}\label{Lemma ii} Let $I$ be a proper regular ideal of a ring $R$. Then $I$ is a max-regular ideal of $R$ if and only if $R/I$ has no nontrivial idempotents. 
\end{lemma}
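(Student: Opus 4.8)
The plan is to reduce everything to a single technical tool: \emph{idempotents lift modulo a regular ideal}. Concretely, I would first prove that if $I$ is generated by idempotents and $f\in R$ satisfies $f^{2}-f\in I$, then there is an idempotent $g\in R$ with $g-f\in I$. To see this, write $f^{2}-f=\sum_{i=1}^{n}r_{i}e_{i}$ with each $e_{i}$ an idempotent generator of $I$, and form the ``join'' $e=1-\prod_{i=1}^{n}(1-e_{i})$, which is again an idempotent lying in $I$ and satisfies $e_{i}e=e_{i}$ for all $i$; hence $f^{2}-f\in(e)$. A direct computation then shows that $g:=f(1-e)$ is idempotent, since $g^{2}-g=(f^{2}-f)(1-e)=0$, while $g-f=-fe\in(e)\subseteq I$. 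Thus $\bar{g}=\bar{f}$ in $R/I$, so every idempotent of $R/I$ lifts to an idempotent of $R$.

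With this in hand, for the forward implication I would prove the contrapositive: if $R/I$ possesses a nontrivial idempotent $\bar{f}$, then $I$ fails to be max-regular. Lifting $\bar{f}$, I choose an idempotent $e\in R$ with $\bar{e}=\bar{f}$, so that $e\notin I$ and $1-e\notin I$. The ideal $I+(e)$ is again regular, being generated by the idempotent generators of $I$ together with $e$, and it strictly contains $I$ since $e\notin I$. It is moreover proper: if $1\in I+(e)$, then in $R/I$ one would have $\bar{1}=\bar{r}\,\bar{e}$ for some $r$, and multiplying by $\bar{e}$ and using $\bar{e}^{2}=\bar{e}$ forces $\bar{e}=\bar{1}$, contradicting $1-e\notin I$. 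Hence $I+(e)$ is a proper regular ideal properly containing $I$, so $I$ is not maximal among proper regular ideals.

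For the converse I would not need the lifting lemma at all. Assume $R/I$ has no nontrivial idempotents and suppose, for contradiction, that $I$ is properly contained in some proper regular ideal $J$. Since $J$ is generated by idempotents and $I\subsetneq J$, some idempotent generator $e$ of $J$ must lie outside $I$ (otherwise $J\subseteq I$); then $\bar{e}\neq 0$, and also $\bar{e}\neq 1$, for otherwise $1-e\in I\subseteq J$ would give $1=e+(1-e)\in J$, contradicting the properness of $J$. Thus $\bar{e}$ is a nontrivial idempotent of $R/I$, a contradiction, so $I$ is max-regular. The only genuinely delicate point in the whole argument is the idempotent-lifting step together with the verification that $I+(e)$ remains proper; the containment arguments in the converse are routine.
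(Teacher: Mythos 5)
Your proof is correct, and in the forward direction it takes a genuinely different route from the paper's. The shared core is the computation that if $f^{2}-f$ lies in the ideal generated by a single idempotent $e$, then $f(1-e)$ is idempotent; your construction $e=1-\prod_{i}(1-e_{i})$ is precisely the proof of the paper's earlier unproved remark that finitely generated regular ideals are principal and generated by an idempotent, a fact the paper uses silently when it writes $x-x^{2}=re$ with $e\in I$ idempotent. After that point the arguments diverge. The paper invokes the structural fact, asserted (without proof) just before the lemma, that every max-regular ideal equals $P^{\ast}$ for some prime ideal $P$; primality then forces the idempotent $e'=x(1-e)$ to satisfy $e'\in I$ or $1-e'\in I$, hence $x\in I$ or $1-x\in I$. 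You avoid that structure theory entirely: you package the computation as an idempotent-lifting lemma (idempotents of $R/I$ lift to idempotents of $R$ whenever $I$ is regular), argue by contrapositive, and use maximality directly, exhibiting $I+(e)$ as a proper regular ideal strictly containing $I$. Your properness check for $I+(e)$ (multiplying $\bar{1}=\bar{r}\,\bar{e}$ by $\bar{e}$) is correct, as is your converse, which is just the paper's ``clear'' direction written out in full. What your route buys is self-containedness: the lemma follows from the definition of max-regular alone, and the lifting lemma is of independent interest. What the paper's route buys is brevity, since the $P^{\ast}$ formalism is developed anyway and reused immediately afterwards (e.g. in Corollary \ref{Lemma iii}, where the quasi-component of a point $P$ of $\Spec(R)$ is identified with $V(P^{\ast})$).
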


\begin{proof} Assume $I$ is a max-regular ideal of $R$. If $x+I$ is an idempotent in $R/I$ for some $x\in R$ then $x-x^2\in I$ and so $x-x^2=re$ for some idempotent $e\in I$ and some $r\in R$. It follows that $(x-x^2)(1-e)=re(1-e)=0$ and so $x(1-e)=x^2(1-e)$. This shows that $e':=x(1-e)$ is idempotent. But $I=P^{\ast}$ for some prime ideal $P$ of $R$. Thus $e'\in I$ or $1-e'\in I$. If $e'\in I$ then $x\in I$. If $1-e'\in I$ then $1-x\in I$. Hence, $R/I$ has no nontrivial idempotents. The reverse implication is clear. 
\end{proof}

\begin{corollary}\label{Lemma iii} Let $R$ be a ring. Then every quasi-component of $\Spec(R)$ is connected. 
\end{corollary}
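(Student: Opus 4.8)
The plan is to give a self-contained algebraic proof resting on the max-regular machinery just developed, rather than merely invoking Corollary \ref{Corollary I} (although, since $\Spec(R)$ is spectral, that corollary does apply and yields the statement instantly). The strategy is to identify the quasi-component of a point $\mathfrak{p}\in\Spec(R)$ explicitly as the closed set $V(\mathfrak{p}^{\ast})$ cut out by the max-regular ideal $\mathfrak{p}^{\ast}$, and then to read off connectedness from Lemma \ref{Lemma ii}.

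First I would fix a prime ideal $\mathfrak{p}$ and compute its quasi-component $q(\mathfrak{p})$. Using the isomorphism of Boolean rings $\mathscr{B}(R)\simeq\Clop(\Spec(R))$ given by $e\mapsto D(e)$, the clopen subsets of $\Spec(R)$ containing $\mathfrak{p}$ are exactly the $D(e)$ with $e$ idempotent and $e\notin\mathfrak{p}$. Hence $q(\mathfrak{p})=\bigcap_{e\notin\mathfrak{p}}D(e)$, where $e$ ranges over the idempotents outside $\mathfrak{p}$. The key step is the equality $q(\mathfrak{p})=V(\mathfrak{p}^{\ast})$. For this I would use the dichotomy that, for any idempotent $e$, the relation $e(1-e)=0$ forces $e\in\mathfrak{q}$ or $1-e\in\mathfrak{q}$ for every prime $\mathfrak{q}$; so $e\notin\mathfrak{p}$ is equivalent to $1-e\in\mathfrak{p}$. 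This lets me show that $\mathfrak{q}\in q(\mathfrak{p})$ if and only if $\mathfrak{p}$ and $\mathfrak{q}$ contain exactly the same idempotents, which in turn is equivalent to $\mathfrak{p}^{\ast}\subseteq\mathfrak{q}$ (the forward direction is immediate; the reverse uses that $1-e\in\mathfrak{p}^{\ast}\subseteq\mathfrak{q}$ together with $e\in\mathfrak{q}$ would give $1\in\mathfrak{q}$). Thus $q(\mathfrak{p})=V(\mathfrak{p}^{\ast})$.

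Finally I would conclude. Since $\mathfrak{p}^{\ast}$ is a proper regular ideal that is max-regular (as recalled before Lemma \ref{Lemma ii}), Lemma \ref{Lemma ii} shows that $R/\mathfrak{p}^{\ast}$ has no nontrivial idempotents. A nonzero ring has no nontrivial idempotents precisely when its spectrum is connected, so $\Spec(R/\mathfrak{p}^{\ast})\simeq V(\mathfrak{p}^{\ast})=q(\mathfrak{p})$ is connected, which is the claim. The main obstacle is the middle step: the clean identification $q(\mathfrak{p})=V(\mathfrak{p}^{\ast})$, since it requires translating the purely topological intersection defining the quasi-component into the condition of sharing the same idempotents, and then matching that condition with membership in $V(\mathfrak{p}^{\ast})$. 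Once this identification is in place, the appeal to Lemma \ref{Lemma ii} and the standard idempotent/connectedness correspondence is routine.
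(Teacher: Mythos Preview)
Your proposal is correct and follows essentially the same route as the paper's proof: the paper first remarks that the result is immediate from Theorem~\ref{Theorem I T-D-F}, and then gives exactly the algebraic argument you outline, namely identifying the quasi-component of $P$ with $V(P^{\ast})$ via the clopen/idempotent correspondence and concluding connectedness from Lemma~\ref{Lemma ii}. Your write-up merely supplies a little more detail in the identification $q(\mathfrak{p})=V(\mathfrak{p}^{\ast})$ than the paper, which states it in one line.
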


\begin{proof} It follows immediately from Theorem \ref{Theorem I T-D-F}. We also provide alternative direct proof for this result. Let $F=\bigcap\limits_{\substack{A\in\Clop(X),
\\P\in A}}A$ be the intersection of all clopen subsets of $X=\Spec(R)$ containing a point $P\in\Spec(R)$. It is well known that every clopen subset of $\Spec(R)$ is of the form $V(e)=D(1-e)$ for some idempotent $e \in R$ (see e.g. Lemma \ref{Theorem lrs 5}). It follows that $F=V(P^{\ast})$. But $P^{\ast}$ is a max-regular ideal of $R$. Thus by Lemma \ref{Lemma ii}, $R/P^{\ast}$ has no nontrivial idempotents and so $F=V(P^{\ast})\simeq\Spec(R/P^{\ast})$ is connected. 
\end{proof}

The above result shows that the connected components of an affine scheme $\Spec(R)$ are precisely of the form $V(I)=\Spec(R/I)$ where $I$ is a max-regular ideal of $R$ (i.e., $I$ is a proper ideal of $R$ generated by a set of idempotent elements such that $R/I$ has no nontrivial idempotents). The more general case of this observation will be proved in Theorem \ref{Theorem lrs 44}.

Recall that if $I$ is an ideal of a ring $R$, then we always have $\Min(R)\cap V(I)\subseteq\Min(I)$. In this regard, we have the following result:

\begin{corollary} If $I$ is a max-regular ideal of a ring $R$, then $\Min(I)=\Min(R)\cap V(I)$.  
\end{corollary}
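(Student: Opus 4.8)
The plan is to establish the nontrivial inclusion $\Min(I)\subseteq\Min(R)\cap V(I)$, since the reverse containment $\Min(R)\cap V(I)\subseteq\Min(I)$ is exactly the one recalled just before the statement and holds for an arbitrary ideal $I$. Because $\Min(I)\subseteq V(I)$ trivially, it suffices to prove that every prime $P$ minimal over $I$ is in fact a minimal prime of $R$.

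So I would fix $P\in\Min(I)$ and suppose, for contradiction, that there is a prime $P'\subsetneq P$. First I would exploit the minimality of $P$ over $I$: if we had $I\subseteq P'$, then $P'$ would be a prime containing $I$ and strictly contained in $P$, contradicting $P\in\Min(I)$; hence $I\not\subseteq P'$. Now the hypothesis enters: $I$ is (max\nobreakdash-)regular, so it is generated by idempotents, and therefore $I\not\subseteq P'$ forces some idempotent $e\in I$ with $e\notin P'$. On the other hand $e\in I\subseteq P$, so $e\in P$.

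The crux is then the elementary but decisive observation that an idempotent cannot separate comparable primes. Indeed, from $e\in P$ and $1\notin P$ we get $1-e\notin P$, and since $P'\subseteq P$ this gives $1-e\notin P'$; combined with $e(1-e)=0\in P'$ and the primality of $P'$, this forces $e\in P'$, contradicting $e\notin P'$. Hence no prime lies strictly below $P$, i.e. $P\in\Min(R)$, which finishes the argument.

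I expect the only real subtlety to be the middle step — extracting an idempotent generator of $I$ lying outside $P'$ — which is precisely where the structural hypothesis on $I$ is used; everything else is formal. It is worth noting that the argument uses only that $I$ is \emph{regular} (generated by a set of idempotents), not the stronger max\nobreakdash-regularity, which is simply the situation of interest here and governs when $V(I)=\Spec(R/I)$ is connected via Lemma \ref{Lemma ii}. Conceptually, the key fact can be phrased as: comparable primes of $R$ contain exactly the same idempotents (since $P\in\overline{\{P'\}}$ places $P,P'$ in a common connected component of $\Spec(R)$, on which every clopen $D(e)$ is constant), and this is what makes minimality over a regular ideal coincide with minimality in $R$ on $V(I)$.
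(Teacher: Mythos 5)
Your proof is correct, and it takes a genuinely different route from the paper's. The paper argues topologically: given $\mathfrak{p}\in\Min(I)$, it picks a minimal prime $\mathfrak{q}\subseteq\mathfrak{p}$ of $R$, notes that $V(\mathfrak{q})$ is irreducible hence connected and therefore contained in some connected component $V(J)$ of $\Spec(R)$, and then uses the fact that $V(I)$ is itself a connected component (this is where max-regularity enters, via Lemma \ref{Lemma ii} and Corollary \ref{Lemma iii}) to conclude $V(I)=V(J)$, hence $I\subseteq\mathfrak{q}\subseteq\mathfrak{p}$ and $\mathfrak{p}=\mathfrak{q}$ by minimality. You instead argue algebraically with the observation that an idempotent cannot separate comparable primes: if $P'\subsetneq P$ then $I\not\subseteq P'$ by minimality of $P$ over $I$, so some idempotent generator $e\in I$ lies in $P\setminus P'$, and then $e(1-e)=0\in P'$ together with $1-e\notin P\supseteq P'$ forces $e\in P'$, a contradiction. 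Your argument is more elementary (it bypasses the connected-component machinery entirely) and, as you correctly note, it proves a strictly stronger statement: the conclusion $\Min(I)=\Min(R)\cap V(I)$ holds for \emph{any} regular ideal $I$, max-regularity being irrelevant to this direction. What the paper's proof buys in exchange is narrative economy: it is a two-line application of the structure theory of connected components of $\Spec(R)$ developed immediately before, which is the theme of that section. Your closing remark that comparable primes contain exactly the same idempotents, because $P\in\overline{\{P'\}}$ places both in one connected component on which each clopen $D(e)$ is constant, is precisely the conceptual bridge between the two proofs.
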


\begin{proof} If $\mathfrak{p}\in\Min(I)$ then there exists a minimal prime $\mathfrak{q}$ of $R$ with $\mathfrak{q}\subseteq\mathfrak{p}$. Since $V(\mathfrak{q})$ is connected (even irreducible), it is contained in a connected component $V(J)$ of $\Spec(R)$. But $\mathfrak{p}\in V(I)\cap V(J)$ and so $V(I)=V(J)$. This shows that $I\subseteq\mathfrak{q}$ and so $\mathfrak{p}=\mathfrak{q}\in\Min(R)$. 
\end{proof}

The converse of the above result is not true. For example, consider $I$ as the zero ideal.

\begin{remark}\label{Remark cc 3} Let $f:X\rightarrow Y$ be a continuous map of topological spaces such that for each connected component $C$ of $Y$ the set $f^{-1}(C)$ is a connected subset of $X$. Then the map $g:\pi_{0}(Y)\rightarrow\pi_{0}(X)$ given by $C\mapsto f^{-1}(C)$ is bijective. In fact, the inverse of $g$ is the natural continuous map $\pi_{0}(f):\pi_{0}(Y)\rightarrow\pi_{0}(X)$ which is given by $C\mapsto C'$ where $C'$ is the (unique) connected component of $Y$ containing $f(C)$. 
But the map $g$ is not necessarily continuous. For example, take $X=\mathbb{Q}$, the set of rational numbers with the discrete topology, and take $Y=\mathbb{Q}$ with the induced Euclidean topology. Then the map $f:X\rightarrow Y$ given by $x\mapsto x$ is clearly continuous. Both spaces $X$ and $Y$ are totally disconnected, i.e., the connected components of each are singletons. Hence, the map $f$ satisfies the above property. However the (bijective) map $g:\pi_{0}(Y)\rightarrow\pi_{0}(X)$ given by $C=\{x\}\mapsto f^{-1}(C)=\{x\}$ is not continuous, because the quotient topology over $\pi_{0}(X)\simeq X$ is discrete, but the quotient topology over $\pi_{0}(Y)\simeq Y$ is not discrete.
\end{remark}

Now, the structure of the connected components of many schemes is completely characterized:

\begin{theorem}\label{Theorem lrs 44} Let $X$ be a qcqs scheme and $R=\mathscr{O}_{X}(X)$. Then the connected components of $X$ are exactly of the form $f^{-1}(C)$ where $f:X\rightarrow\Spec(R)$ is the canonical morphism and $C$ is a connected component of $\Spec(R)$. 
\end{theorem}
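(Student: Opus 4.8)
The plan is to transport the description of the quasi-components of $\Spec(R)$ (Corollary \ref{Lemma iii}) across the canonical morphism $f:X\to\Spec(R)$ and combine it with Corollary \ref{Coro iv}. The starting point is that $f$ induces an \emph{isomorphism} $\Clop(f):\Clop(\Spec(R))\to\Clop(X)$, $V\mapsto f^{-1}(V)$. Indeed, $\Clop(f)$ fits into the triangle formed by the affine isomorphism $\mathscr{B}(R)\to\Clop(\Spec(R))$, $e\mapsto D(e)$ (\cite[Theorem 3.1]{Tarizadeh}), and the isomorphism $\mathscr{B}(R)\to\Clop(X)$, $s\mapsto X_{s}$, of Lemma \ref{Theorem lrs 5}; the compatibility is precisely $X_{s}=f^{-1}(D(s))$ from Remark \ref{Remark lrs 2}. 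Two consequences I would extract: every clopen of $X$ is $f^{-1}(V)$ for a unique clopen $V$ of $\Spec(R)$, and a nonempty clopen pulls back to a nonempty clopen (since $X_{e}=\emptyset=X_{0}$ forces $e=0$ by injectivity in Lemma \ref{Theorem lrs 5}, so $D(e)\neq\emptyset$ gives $f^{-1}(D(e))\neq\emptyset$).

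From the first consequence I would deduce the pointwise identity $q_{X}(x)=f^{-1}\bigl(q_{\Spec(R)}(f(x))\bigr)$ between quasi-components: since $x\in f^{-1}(V)$ iff $f(x)\in V$, the intersection defining $q_{X}(x)$ may be taken over the clopens $f^{-1}(V)$ with $f(x)\in V$, and pulling the intersection out through $f^{-1}$ gives the claim. Now Corollary \ref{Coro iv} says the quasi-components of the qcqs scheme $X$ are connected, hence are its connected components, and Corollary \ref{Lemma iii} says the same for $\Spec(R)$. Reading the identity through these two facts already yields one direction: the connected component of any $x\in X$ equals $f^{-1}(C)$, where $C$ is the connected component of $f(x)$ in $\Spec(R)$. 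Thus every connected component of $X$ has the asserted form.

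For the reverse direction I must show that \emph{every} connected component $C$ of $\Spec(R)$ is hit, i.e.\ that $f^{-1}(C)\neq\emptyset$; this is the one place the quasi-compactness of $X$ is genuinely used, and I expect it to be the main obstacle. Writing $C$ as the intersection $\bigcap_{V\in\mathcal{V}}V$ over the family $\mathcal{V}$ of clopens of $\Spec(R)$ containing $C$ (its quasi-component description via Corollary \ref{Lemma iii}), the family $\{f^{-1}(V):V\in\mathcal{V}\}$ consists of \emph{nonempty} clopens of $X$ (each $V\supseteq C$ is nonempty, so its pullback is, by the first paragraph) and is closed under finite intersection (as $\mathcal{V}$ is). Quasi-compactness of $X$ then forces $f^{-1}(C)=\bigcap_{V\in\mathcal{V}}f^{-1}(V)\neq\emptyset$. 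Picking $x\in f^{-1}(C)$ gives $C=q_{\Spec(R)}(f(x))$, whence $f^{-1}(C)=q_{X}(x)$ is a connected component of $X$ by Corollary \ref{Coro iv}. Having shown that $f^{-1}(C)$ is a nonempty connected subset of $X$ for every connected component $C$ of $\Spec(R)$, the proof closes by invoking Remark \ref{Remark cc 3}, which upgrades this to the bijection $\pi_{0}(\Spec(R))\to\pi_{0}(X)$, $C\mapsto f^{-1}(C)$, and furnishes the in-particular homeomorphism statement. The only routine checks remaining are the commutativity of the clopen triangle and the elementary injectivity fact above, which I would dispatch quickly.
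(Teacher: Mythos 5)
Your proof is correct and takes essentially the same route as the paper's: both rest on Lemma \ref{Theorem lrs 5} to transport clopens along $f$, use the quasi-compactness of $X$ once to get $f^{-1}(C)\neq\emptyset$ (your finite-intersection-property argument with the closed sets $f^{-1}(V)$ is just the dual of the paper's finite-subcover argument with idempotents of the max-regular ideal), identify $f^{-1}(C)$ with a quasi-component $q(x)$ of $X$, and conclude connectedness from Corollary \ref{Coro iv}. One small correction to your closing sentence: Remark \ref{Remark cc 3} furnishes only the bijection $C\mapsto f^{-1}(C)$, not a homeomorphism (the remark's own counterexample shows the inverse of $\pi_{0}(f)$ need not be continuous, and the paper derives the homeomorphism separately in Corollary \ref{Coro qcqs cc 1} via Lemma \ref{Lemma I}); this is immaterial here, since the statement you were asked to prove asserts no homeomorphism.
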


\begin{proof} It suffices (see Remark \ref{Remark cc 3}) to show that if $C=V(I)$ is a connected component of $\Spec(R)$, then $f^{-1}(C)$ is a connected subset of $X$ where $I$ is a max-regular ideal of $R$. First, we show that $f^{-1}(C)$ is nonempty. We know that a regular ideal is generated by a set of idempotent elements, and if an ideal $I$ is generated by a subset $S$, then $V(I)=\bigcap\limits_{r\in S}V(r)$. 
If $f^{-1}(C)=\emptyset$, then by the quasi-compactness of $X$, there are finitely many idempotents $s_{1},\ldots, s_{n}\in I$ such that $X=\bigcup\limits_{i=1}^{n}X_{s_{i}}=
f^{-1}(\bigcup\limits_{i=1}^{n}D(s_{i}))$. But there exists an idempotent $s\in I$ such that $\bigcup\limits_{i=1}^{n}D(s_{i})=D(s)$, because if $e_{1}$ and $e_{2}$ are idempotents of a ring $R$ then $D(e_{1})\cup D(e_{2})=D(e_{1}+e_{2}-e_{1}e_{2})$. It follows that $X_{1}=X=X_{s}$. Then by Lemma \ref{Theorem lrs 5}, $1=s\in I$ which is a contradiction, because $I$ is a proper ideal of $R$. Therefore, $f^{-1}(C)$ is nonempty. Take $x\in f^{-1}(C)$. Then we show that $f^{-1}(C)=q(x)$, the intersection of all clopen subsets of $X$ containing the point $x$. We have $f(x)=\mathfrak{p}_{x}\in C$ and so $I\subseteq\mathfrak{p}_{x}$. It can be easily seen that the ideal $I$ is generated by $S=\{e\in\mathfrak{p}_{x}: e=e^{2}\}$, the idempotents of $\mathfrak{p}_{x}$. If $U$ is a clopen subset of $X$ containing the point $x$, then by Lemma \ref{Theorem lrs 5}, $U=X_{s}=f^{-1}(D(s))$ where $s$ is an idempotent of $R$. It follows that $1-s\in\mathfrak{p}_{x}$. This shows that the clopen subsets of $X$ containing the point $x$ are exactly of the form $X_{1-e}$ with $e\in S$. We have $C=V(I)=\bigcap\limits_{e\in S}V(e)=\bigcap\limits_{e\in S}D(1-e)$ and so $f^{-1}(C)=\bigcap\limits_{e\in S}X_{1-e}=q(x)$.    
Then by Corollary \ref{Coro iv}, $f^{-1}(C)$ is connected. This completes the proof.  
\end{proof}

Recall that by primitive idempotent of a ring $R$ we mean a nonzero idempotent $e\in R$ such that if $ee'=e'$ for some nonzero idempotent $e' \in R$, then $e=e'$. 

It can be seen that distinct primitive idempotents of a commutative ring are orthogonal.  
For a topological space $X$, the primitive idempotents of $\Clop(X)$ are exactly the connected components of $X$ that are also open subsets of $X$. The primitive idempotents of a ring $R$ and  its Boolean ring $\mathscr{B}(R)$ are the same. 

In the following result we present interesting equivalents to this concept:

\begin{lemma}\label{Lemma iv} If $e$ is an idempotent of a ring $R$ then the following assertions are equivalent: \\
$\mathbf{(i)}$ $e$ is a primitive idempotent of $R$. \\
$\mathbf{(ii)}$ $I=R(1-e)$ is a max-regular ideal of $R$. \\
$\mathbf{(iii)}$ $D(e)$ is a connected component of $\Spec(R)$. \\
$\mathbf{(iv)}$ $D(e)$ is a connected subset of $\Spec(R)$. 
\end{lemma}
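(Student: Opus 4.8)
The plan is to prove the cyclic chain of implications $\mathbf{(i)}\Rightarrow\mathbf{(ii)}\Rightarrow\mathbf{(iii)}\Rightarrow\mathbf{(iv)}\Rightarrow\mathbf{(i)}$. Before starting, I would record the running observation that each of the four conditions forces $e\neq 0$: a primitive idempotent is nonzero by definition; a max-regular ideal is proper, and $R(1-e)=R$ precisely when $e=0$ (an idempotent is a unit if and only if it equals $1$); and a connected subset is nonempty, while $D(e)=\emptyset$ exactly when the idempotent $e$ is nilpotent, i.e. $e=0$. I would also use throughout the identities $V(R(1-e))=V(1-e)=D(e)$ together with the isomorphism $\mathscr{B}(R)\simeq\Clop(\Spec(R))$ given by $e\mapsto D(e)$ (the affine case of Lemma \ref{Theorem lrs 5}), in particular its injectivity.

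For $\mathbf{(i)}\Rightarrow\mathbf{(ii)}$, set $I=R(1-e)$, which is a proper regular ideal since $e\neq 0$. I would identify $R/I\cong Re$ via the map $x+I\mapsto xe$, whose kernel is exactly $R(1-e)$, and note that the idempotents of the ring $Re$ (with identity element $e$) are precisely the idempotents $f$ of $R$ satisfying $ef=f$. Primitivity of $e$ says that the only such nonzero $f$ is $e$ itself, so $Re\cong R/I$ has no nontrivial idempotents; Lemma \ref{Lemma ii} then gives that $I$ is a max-regular ideal.

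For $\mathbf{(ii)}\Rightarrow\mathbf{(iii)}$, I would simply invoke the description of the connected components of $\Spec(R)$ recorded just after Corollary \ref{Lemma iii}, namely that they are exactly the sets $V(I)$ with $I$ a max-regular ideal. Since $V(R(1-e))=D(e)$, this is immediate. The implication $\mathbf{(iii)}\Rightarrow\mathbf{(iv)}$ is trivial, as every connected component is by definition connected.

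The closing implication $\mathbf{(iv)}\Rightarrow\mathbf{(i)}$ is where the clopenness of $D(e)$ does the work. Assuming $D(e)$ is connected (so $e\neq 0$), let $e'$ be any nonzero idempotent with $ee'=e'$. Then $D(e')\subseteq D(e)$, and since $D(e')$ is clopen in $\Spec(R)$ while $D(e)$ is itself clopen, $D(e')$ is a clopen subset of the subspace $D(e)$; it is nonempty because $e'\neq 0$. Connectedness of $D(e)$ forces $D(e')=D(e)$, and injectivity of $e\mapsto D(e)$ yields $e'=e$, so $e$ is primitive. I expect the only genuinely delicate point to be the idempotent bookkeeping in $\mathbf{(i)}\Rightarrow\mathbf{(ii)}$: correctly identifying $R/R(1-e)$ with $Re$ and matching the idempotents of this quotient with the idempotents $f$ of $R$ satisfying $ef=f$. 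Everything else reduces to the already-established dictionary between idempotents and clopens together with Lemma \ref{Lemma ii}.
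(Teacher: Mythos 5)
Your proposal is correct and takes essentially the same route as the paper: the identical cyclic chain $\mathbf{(i)}\Rightarrow\mathbf{(ii)}\Rightarrow\mathbf{(iii)}\Rightarrow\mathbf{(iv)}\Rightarrow\mathbf{(i)}$, with Lemma \ref{Lemma ii} doing the work in $\mathbf{(i)}\Rightarrow\mathbf{(ii)}$ and the injectivity of $e\mapsto D(e)$ plus the ``nonempty clopen in a connected space'' argument closing the loop in $\mathbf{(iv)}\Rightarrow\mathbf{(i)}$. The only cosmetic differences are that you show $R/R(1-e)$ has no nontrivial idempotents via the isomorphism $R/R(1-e)\cong Re$ (matching idempotents of $Re$ with idempotents $f$ of $R$ satisfying $ef=f$) where the paper does the equivalent element computation with $xe$, and in $\mathbf{(ii)}\Rightarrow\mathbf{(iii)}$ you cite the description of connected components as $V(I)$ with $I$ max-regular, recorded after Corollary \ref{Lemma iii}, where the paper rederives it; both are sound and involve no circularity.
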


\begin{proof} (i)$\Rightarrow$(ii): We have $I\neq R$, because if $I=R$ then $1=r(1-e)$ and so $e=1e=r(1-e)e=0$ which is contradiction. Hence, $I$ is a proper regular ideal of $R$. Then by Lemma \ref{Lemma ii}, it suffices to show that $R/I$ has no nontrivial idempotents. Assume $x+I$ is an idempotent for some $x\in R$. If $xe=0$ then $x=x(1-e)\in I$. Thus we may assume $xe\neq0$.
We have $x-x^2=r'(1-e)$ for some $r'\in R$. It follows that $xe=x^2e$ and so $xe$ is idempotent. But $(xe)e=xe$. Thus by hypothesis, $xe=e$. It follows that $1-x=(1-x)(1-e)\in I$. \\
(ii)$\Rightarrow$(iii): By Lemma \ref{Lemma ii}, $R/I$ has no nontrivial idempotents. Then $\Spec(R/I)$ is connected which is homeomorphic to $V(I)$. Thus $V(I)=V(1-e)=D(e)$ is a connected subset of $\Spec(R)$. Hence, $D(e)$ is contained in a connected component $C$ of $\Spec(R)$. By hypothesis, $I\neq R$ and so $e\neq0$. Thus $D(e)$ is a nonempty clopen subset of a connected space $C$. Then $D(e)=C$. \\
(iii)$\Rightarrow$(iv): There is nothing to prove. \\
(iv)$\Rightarrow$(i): By definition, every connected space is nonempty and so $e\neq0$. Suppose that $ee'=e'$ for some nonzero idempotent $e'\in R$. Then $D(e')$ is a nonempty clopen subset of $D(e)$ and so $D(e')=D(e)$. It follows that $e=e'$. 
\end{proof} 

\begin{corollary}\label{Coro lrs qcqs 21} Let $X$ be a qcqs scheme and $e$ be an idempotent of $R=\mathscr{O}_{X}(X)$. Then the following are equivalent: \\
$\mathbf{(i)}$ $e$ is a primitive idempotent of $R$. \\
$\mathbf{(ii)}$  $X_{e}$ is a connected component of $X$. \\
$\mathbf{(iii)}$ $X_{e}$ is a connected subset of $X$. 
\end{corollary}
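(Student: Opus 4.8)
The plan is to transport the affine equivalence of Lemma~\ref{Lemma iv} across the canonical morphism $f:X\to\Spec(R)$, using two facts already in hand: the identity $X_{e}=f^{-1}(D(e))$ recorded in Remark~\ref{Remark lrs 2}, and the Boolean-ring isomorphism $\mathscr{B}(R)\simeq\Clop(X)$, $s\mapsto X_{s}$, of Lemma~\ref{Theorem lrs 5} (in particular its injectivity). I would prove the cycle $\mathbf{(i)}\Rightarrow\mathbf{(ii)}\Rightarrow\mathbf{(iii)}\Rightarrow\mathbf{(i)}$.

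For $\mathbf{(i)}\Rightarrow\mathbf{(ii)}$: if $e$ is primitive, then Lemma~\ref{Lemma iv} gives that $D(e)$ is a connected component of $\Spec(R)$. Since $X_{e}=f^{-1}(D(e))$, Theorem~\ref{Theorem lrs 44} says exactly that $X_{e}$ is a connected component of $X$. The implication $\mathbf{(ii)}\Rightarrow\mathbf{(iii)}$ is immediate, a connected component being by definition a connected subset.

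The real content is $\mathbf{(iii)}\Rightarrow\mathbf{(i)}$, and this is the step I expect to require actual work. Assume $X_{e}$ is connected, hence nonempty; since $X_{0}=\emptyset$, the injectivity in Lemma~\ref{Theorem lrs 5} already forces $e\neq 0$. To check primitivity, suppose $e'$ is a nonzero idempotent with $ee'=e'$, and set $e''=e-e'$. A short computation shows that $e''$ is an idempotent orthogonal to $e'$ with $e'+e''=e$, so that $D(e)=D(e')\sqcup D(e'')$ and therefore $X_{e}=X_{e'}\sqcup X_{e''}$ is a disjoint union of clopen subsets of $X$ by Lemma~\ref{Theorem lrs 5}. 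Because $e'\neq 0$, injectivity makes $X_{e'}$ nonempty; connectedness of $X_{e}$ then forces $X_{e''}=\emptyset$, hence $e''=0$ and $e=e'$. Thus $e$ is primitive.

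The only non-formal point is the passage from the topological hypothesis ``$X_{e}$ connected'' to an algebraic conclusion about $e$; the orthogonal splitting $e=e'+e''$ is precisely the device that converts a potential failure of primitivity into a genuine clopen decomposition of $X_{e}$, while the injective half of Lemma~\ref{Theorem lrs 5} is what guarantees that each nonzero idempotent piece yields a nonempty clopen. I note that this direction, and hence the equivalence $\mathbf{(i)}\Leftrightarrow\mathbf{(iii)}$, uses only Lemma~\ref{Theorem lrs 5} and so holds for an arbitrary locally ringed space; the qcqs hypothesis enters solely through Theorem~\ref{Theorem lrs 44} in establishing $\mathbf{(i)}\Rightarrow\mathbf{(ii)}$.
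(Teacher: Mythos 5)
Your proposal is correct and takes essentially the same route as the paper: (i)$\Rightarrow$(ii) is verbatim the paper's argument (Lemma~\ref{Lemma iv} plus Theorem~\ref{Theorem lrs 44}), and your (iii)$\Rightarrow$(i) is only a cosmetic variant of the paper's --- where you split $e=e'+e''$ with $e''=e-e'$ and use connectedness to kill $X_{e''}$, the paper notes directly that $X_{e'}=X_{e'e}=X_{e'}\cap X_{e}$ is a nonempty clopen subset of the connected set $X_{e}$, hence equals $X_{e}$, and then invokes the injectivity of Lemma~\ref{Theorem lrs 5} exactly as you do. One small caveat about your closing remark: what your write-up actually establishes without the qcqs hypothesis is only the implication (iii)$\Rightarrow$(i), since your proof of (i)$\Rightarrow$(iii) passes through (ii) and hence through Theorem~\ref{Theorem lrs 44}; the full equivalence (i)$\Leftrightarrow$(iii) does hold for an arbitrary locally ringed space, but that requires the additional (purely topological) observation that a minimal nonempty clopen subset is automatically connected --- any disconnection of a clopen set $A=U\sqcup V$ exhibits $U$ as a strictly smaller nonempty clopen subset of $X$, contradicting primitivity of $A$ in $\Clop(X)$.
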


\begin{proof} (i)$\Rightarrow$(ii): By Lemma \ref{Lemma iv}, $D(e)$ is a connected component of $\Spec(R)$. Then by Theorem \ref{Theorem lrs 44}, $f^{-1}(D(e))=X_{e}$ is a connected component of $X$ where $f: X\rightarrow\Spec(R)$ is the canonical morphism. \\
(ii)$\Rightarrow$(iii): There is nothing to prove. \\
(iii)$\Rightarrow$(i): Since $X_{e}$ is nonempty, by Lemma \ref{Theorem lrs 5}, $e$ is nonzero. If $se=s$ for some nonzero idempotent $s\in R$, then $X_{s}=X_{se}=X_{s}\cap X_{e}$ is a nonempty clopen subset of $X_{e}$ and so $X_{e}=X_{s}$. Then again by Lemma \ref{Theorem lrs 5}, $e=s$. Hence, $e$ is a primitive idempotent of $R$.
\end{proof}  

\begin{corollary} The connected components of a qcqs scheme $X$ which are also open subsets are exactly of the form $X_{s}$ where $s$ is a primitive idempotent of $R=\mathscr{O}_{X}(X)$.
\end{corollary}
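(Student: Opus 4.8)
The plan is to combine the equivalence in Corollary \ref{Coro lrs qcqs 21} with the Boolean-ring isomorphism of Lemma \ref{Theorem lrs 5}, together with the standard topological fact that every connected component is closed. No genuinely new argument is required: the content has already been packaged into the two cited results, and the statement is essentially their juxtaposition.

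First I would establish the easy inclusion. Suppose $s$ is a primitive idempotent of $R$. Then the implication (i)$\Rightarrow$(ii) of Corollary \ref{Coro lrs qcqs 21} shows that $X_{s}$ is a connected component of $X$. Since $X_{s}$ is always an open subset of $X$ (Remark \ref{Remark lrs 2}), this exhibits $X_{s}$ as a connected component that is simultaneously open, as claimed.

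For the reverse inclusion I would start with an arbitrary connected component $C$ of $X$ that is also open. Because every connected component of a topological space is closed, $C$ is then clopen, so $C\in\Clop(X)$. Applying the surjectivity part of Lemma \ref{Theorem lrs 5}, I obtain an idempotent $s\in R$ with $C=X_{s}$. Now $X_{s}=C$ is a connected component of $X$, so the implication (ii)$\Rightarrow$(i) of Corollary \ref{Coro lrs qcqs 21} forces $s$ to be a primitive idempotent of $R$. This shows that every open connected component of $X$ is of the form $X_{s}$ with $s$ primitive.

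The step requiring the most care is not really an obstacle but simply the point worth isolating: the passage from the hypothesis ``$C$ is open'' to ``$C$ is clopen,'' which is exactly what licenses the appeal to Lemma \ref{Theorem lrs 5}. Once the hypothesis is upgraded to clopenness, the two cited results close the argument in both directions.
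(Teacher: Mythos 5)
Your proof is correct and follows essentially the same route as the paper: the forward inclusion via Corollary \ref{Coro lrs qcqs 21}, and the reverse inclusion by upgrading the open component to a clopen set (components are closed), invoking the surjectivity in Lemma \ref{Theorem lrs 5} to write it as $X_{s}$, and then applying Corollary \ref{Coro lrs qcqs 21} again to get primitivity. Your explicit remarks that $X_{s}$ is open and that openness plus closedness is what licenses the appeal to Lemma \ref{Theorem lrs 5} are points the paper leaves implicit, but they do not constitute a different argument.
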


\begin{proof} If $s$ is a primitive idempotent of $R$ then by Corollary \ref{Coro lrs qcqs 21}, $X_{s}$ is a connected component of $X$. Conversely, if $C$ is a connected component and open subset of $X$ then by Lemma \ref{Theorem lrs 5}, $C=X_{e}$ where $e$ is an idempotent of $R$. Thus by Corollary \ref{Coro lrs qcqs 21}, $e$ is a primitive idempotent of $R$. 
\end{proof}

\begin{remark}\label{Remark fiber 4} Let $f:X\rightarrow S$ be a morphism of schemes and $S'$ be a closed subset of $S$. Then it is well known that $f^{-1}(S')$ with the induced topology is canonically homeomorphic to the underlying space of the scheme $X\times_{S}S'$ where $S'$ is equipped with a closed subscheme structure. In particular, if $S_{1}$ and $S_{2}$ are closed subschemes of $S$ whose underlying spaces are the same (or homeomorphic), then the underlying spaces of the schemes $X\times_{S}S_{1}$ and $X\times_{S}S_{2}$ are homeomorphic (although the schemes themselves are not necessarily isomorphic).  
Also recall that a closed subset of a scheme can be equipped with a (unique) reduced induced closed subscheme structure. In particular, the reduced induced closed subscheme $f^{-1}(S')$ is canonically isomorphic to the scheme $X\times_{S}S'$ where $S'$ is equipped with the reduced induced closed subscheme structure. 
\end{remark}

\begin{corollary} Let $X$ be a qcqs scheme and $R=\mathscr{O}_{X}(X)$. Then the connected components of $X$, as reduced induced closed subschemes, are exactly of the form $X\times_{\Spec(R)}C$ where $C$ is a connected component of $\Spec(R)$ which is equipped with the reduced induced closed subscheme structure.  
\end{corollary}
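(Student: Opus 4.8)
The plan is to reduce everything to the set-theoretic statement already established in Theorem \ref{Theorem lrs 44} and then transport the scheme structure across using Remark \ref{Remark fiber 4}. First I would recall that, by Theorem \ref{Theorem lrs 44}, the connected components of the underlying topological space of $X$ are precisely the subsets $f^{-1}(C)$, where $f:X\rightarrow\Spec(R)$ is the canonical morphism and $C$ ranges over the connected components of $\Spec(R)$; moreover, by Remark \ref{Remark cc 3} the assignment $C\mapsto f^{-1}(C)$ is a bijection between $\pi_{0}(\Spec(R))$ and $\pi_{0}(X)$. This already pins down the connected components of $X$ as subspaces, so the only remaining task is to identify the scheme structure.

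Next I would observe that every connected component of a topological space is closed (the closure of a connected set is connected, so a connected component coincides with its own closure). Hence each such $C$ is a closed subset of $\Spec(R)$, and since $f$ is continuous, $f^{-1}(C)$ is a closed subset of $X$. In particular $f^{-1}(C)$ carries a well-defined reduced induced closed subscheme structure, which is exactly the structure referred to in the statement. Now I would apply Remark \ref{Remark fiber 4} with $S=\Spec(R)$ and $S'=C$, where $C$ is equipped with its reduced induced closed subscheme structure: the remark supplies a canonical isomorphism between the reduced induced closed subscheme on $f^{-1}(C)$ and the fiber product $X\times_{\Spec(R)}C$. Combining this with the first paragraph shows that the connected components of $X$, regarded as reduced induced closed subschemes, are exactly the schemes $X\times_{\Spec(R)}C$ with $C$ a connected component of $\Spec(R)$ carrying the reduced induced closed subscheme structure.

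I do not expect any genuinely hard analytic step here; the content is essentially bookkeeping that glues together two previously established facts. The one point requiring care is a matter of scheme structures rather than of topology: the copy of $C$ appearing in the fiber product must carry the \emph{reduced} induced closed subscheme structure, since (as Remark \ref{Remark fiber 4} itself warns) two closed subscheme structures on $\Spec(R)$ with the same underlying space yield fiber products that are homeomorphic but need not be isomorphic as schemes. Thus the main thing to verify is the compatibility of the reduced induced structure on the source with the reduced induced structure on $C$, which is precisely what the cited remark guarantees.
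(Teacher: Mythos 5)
Your proposal is correct and follows exactly the paper's own (one-line) proof: the paper likewise deduces the corollary by combining Theorem \ref{Theorem lrs 44} with Remark \ref{Remark fiber 4}. Your elaboration — the topological identification of components via $f^{-1}(C)$, their closedness, and the transport of the reduced induced structure through the fiber product — is just the detailed unwinding of those same two citations.
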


\begin{proof} This follows from Theorem \ref{Theorem lrs 44} and Remark \ref{Remark fiber 4}.
\end{proof}

The following result allows us to further investigate the space of connected components:   

\begin{lemma}\label{Lemma I} Let $X$ be a topological space, $R=\Clop(X)$ and consider the map $f:\pi_{0}(X)\rightarrow\Spec(R)$ given by $[x]\mapsto M_{x}\cap R$ where $M_{x}=\mathscr{P}(X\setminus\{x\})$. Then we have: \\
$\mathbf{(i)}$ $f$ is continuous. \\
$\mathbf{(ii)}$ $f$ is injective if and only if every quasi-component of $X$ is connected. \\
$\mathbf{(iii)}$ $f$ is a surjective and closed map provided that $X$ is quasi-compact. 
\end{lemma}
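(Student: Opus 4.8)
The plan is to record the Boolean-ring structure underlying the map, treat the three assertions in order, and isolate the surjectivity step in (iii) as the substantive point. First I would note that $R=\Clop(X)$ is a Boolean ring, so every prime of $R$ is maximal and $\Spec(R)$ is a Stone space; in particular it is (compact) Hausdorff, a fact I will reuse for closedness. For a point $x$, the intersection $M_x\cap R$ is exactly the set of clopen subsets of $X$ that do not contain $x$, and it is the kernel of the evaluation-at-$x$ homomorphism $R\to\mathbb{Z}/2$ sending a clopen $A$ to its value $[x\in A]$; hence $M_x\cap R$ is a (maximal) prime and $f$ indeed lands in $\Spec(R)$. The map is well defined on $\pi_{0}(X)$ because a connected component lies inside a clopen $A$ as soon as it meets $A$, so the membership $x\in A$ depends only on $[x]$; equivalently $M_x\cap R=M_y\cap R$ whenever $[x]=[y]$.

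For (i), I would compute the preimage of a basic open $D(A)$ with $A\in R$. Since $A\in M_x\cap R$ iff $x\notin A$, we get $f^{-1}(D(A))=\{[x]:x\in A\}$, whose preimage under the quotient map $\pi\colon X\to\pi_{0}(X)$ is precisely the clopen set $A$. As $A$ is open, $f^{-1}(D(A))$ is open by definition of the quotient topology, so $f$ is continuous. For (ii), the key identity is that $f([x])=f([y])$ holds iff $M_x\cap R=M_y\cap R$, i.e. iff $x$ and $y$ belong to the same clopen subsets of $X$, i.e. iff $q(x)=q(y)$. Since each quasi-component is the disjoint union of the connected components meeting it (recalled in \S2), $f$ fails to be injective exactly when some quasi-component contains two distinct connected components, that is, exactly when some quasi-component is disconnected. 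Hence $f$ is injective iff every quasi-component of $X$ is connected.

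For (iii), surjectivity is where quasi-compactness enters. Given a prime $\mathfrak{p}$, the complementary family $\mathcal{F}=R\setminus\mathfrak{p}$ consists of clopen (hence closed) subsets of $X$, and it is nonempty since $X=1\notin\mathfrak{p}$. Because $\mathfrak{p}$ is prime and $\emptyset=0\in\mathfrak{p}$, any finite intersection of members of $\mathcal{F}$ is again a product lying outside $\mathfrak{p}$, hence nonempty, so $\mathcal{F}$ has the finite intersection property; quasi-compactness of $X$ then produces a point $x\in\bigcap_{A\in\mathcal{F}}A$. I would then verify $M_x\cap R=\mathfrak{p}$: if $A\notin\mathfrak{p}$ then $x\in A$ by construction, so $A\notin M_x\cap R$; conversely if $A\in\mathfrak{p}$ then its complement $1-A\notin\mathfrak{p}$ (as $A(1-A)=0$ with $\mathfrak{p}$ prime and proper), whence $x\in X\setminus A$ and $A\in M_x\cap R$. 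For closedness I would argue softly: $\pi_{0}(X)$ is quasi-compact, being the continuous image of the quasi-compact space $X$ under the quotient map, and $\Spec(R)$ is Hausdorff by the Stone-space observation; a continuous map from a quasi-compact space to a Hausdorff space is closed, since each closed subset of the domain is quasi-compact, its image is quasi-compact, and a quasi-compact subset of a Hausdorff space is closed.

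The main obstacle I anticipate is the surjectivity step: one must realize a given prime $\mathfrak{p}$ by an honest point $x$, which forces the finite-intersection-property argument together with quasi-compactness, with care that the family $R\setminus\mathfrak{p}$ genuinely consists of closed sets whose finite intersections avoid the empty set (this is where primeness is used). Once $x$ is produced and $M_x\cap R=\mathfrak{p}$ is confirmed via complementation, closedness is a formal consequence of the Stone-space structure of $\Spec(R)$ and the quasi-compactness of $\pi_{0}(X)$.
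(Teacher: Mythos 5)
Your proof is correct, and parts (i) and (ii) are essentially the paper's argument: the same well-definedness check (a connected component lies in any clopen it meets), the same computation $g^{-1}(f^{-1}(D(A)))=A$ for continuity, and the same identification of fibers of $f$ with quasi-components. Part (iii) is where you genuinely diverge on surjectivity. The paper argues by contradiction: if a prime $P$ equals no $M_x\cap R$, then (using that every prime of a Boolean ring is maximal) for each $x$ there is $A_x\in P$ with $x\in A_x$; quasi-compactness extracts a finite subcover, and since a finite union of members of the ideal $P$ again lies in $P$, one gets $X=1\in P$, a contradiction. You instead construct the preimage point directly: the family $R\setminus\mathfrak{p}$ consists of closed sets with the finite intersection property (primeness of $\mathfrak{p}$ plus $\emptyset=0\in\mathfrak{p}$), quasi-compactness produces $x\in\bigcap_{A\notin\mathfrak{p}}A$, and you verify both inclusions $M_x\cap R\subseteq\mathfrak{p}$ and $\mathfrak{p}\subseteq M_x\cap R$. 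The two arguments are dual formulations of compactness --- your family is precisely the set of complements of the paper's covering sets, since for a prime of a Boolean ring exactly one of $A$, $X\setminus A$ lies in it --- but yours is constructive and, as a small bonus, never needs maximality of primes in $R$, since you establish equality by two inclusions rather than by one inclusion plus maximality. One quibble: your parenthetical justification that $A\in\mathfrak{p}$ forces $X\setminus A\notin\mathfrak{p}$ cites $A(1-A)=0$ and primeness, but primeness only yields that \emph{at least one} of $A$, $X\setminus A$ lies in $\mathfrak{p}$; the direction you need comes from properness: if both lay in $\mathfrak{p}$, then $A+(X\setminus A)=X=1\in\mathfrak{p}$. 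This is a one-line fix, not a gap.
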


\begin{proof} (i): For each $x\in X$, then $M_x$ is a maximal ideal of $\mathscr{P}(X)$ and so $M_x\cap R$ is a prime (maximal) ideal of $R$. If $[x]=[y]$ for some $x,y\in X$ then we need to show that $M_x\cap R=M_y\cap R$. Take $A\in M_x\cap R$. If $A\notin M_y\cap R$ then $A$ is a clopen subset of $X$ containing $y$ and so 
the connected component $[y]$ is contained in $A$. It follows that $x\in A$ which is a contradiction. Thus $M_x\cap R\subseteq M_y\cap R$ and so $M_x\cap R= M_y\cap R$, because $M_x\cap R$ is a maximal ideal of $R$. 
Hence, the above map $f$ is well-defined. This map is continuous, because for each $A\in R$ then $g^{-1}(f^{-1}(D(A)))=A$ is an open subset of $X$ where $D(A)=\{P\in\Spec(R): A\notin P\}$ is a basis open of $\Spec(R)$ and $g:X\rightarrow \pi_{0}(X)$ 
is the canonical map that is given by $x\mapsto[x]$. \\ 
(ii): We know that every connected component $[x]$ of $X$ is contained in the intersection of all clopen subsets of $X$ containing the point $x\in X$. Assume $f$ is injective. If $y\in\bigcap\limits_{\substack{A\in\Clop(X),
\\x\in A}}A$ then $M_y\cap R\subseteq M_x\cap R$ and so $M_y\cap R=M_x\cap R$, because $M_y\cap R$ is a maximal ideal of $R$. But $f$ is injective and so $y\in [x]$. Conversely, suppose $M_x\cap R=M_y\cap R$ for some $x,y\in X$. If $y\notin[x]$ then by hypothesis, there exists some clopen $A$ in $X$ containing $x$ such that $y\notin A$. This yields that $A\in M_y\cap R$ which is a contradiction. Thus $y\in[x]$ and so $[x]=[y]$. Hence, $f$ is injective.  \\
(iii): If $X$ is quasi-compact then $\pi_{0}(X)$ is quasi-compact. The space $\Spec(R)$ is Hausdorff, because the prime spectrum of a commutative ring $R$ is Hausdorff if and only if every prime ideal of $R$ is maximal. It can be seen that every continuous map from a quasi-compact space into a Hausdorff space is a closed map. Hence, $f$ is a closed map. To see surjectivity, suppose that $P$ is a prime (maximal) ideal of $R$ with $P\neq M_x\cap R$ for all $x\in X$. 
Then for each $x\in X$, $P$ is not contained in $M_x\cap R$  and so there exists some $A_x\in P$ such that $x\in A_x$. Thus $\{A_x: x\in X\}$ is an open covering of $X$. Since $X$ is quasi-compact, there exist finitely many points $x_{1},\ldots, x_{n}\in X$ with $X=\bigcup\limits_{k=1}^{n}A_{x_{k}}\in P$. But this is a contradiction, because $X$ is the unit element of the ring $R$.  
\end{proof}  

In the above lemma, the converse of (iii) does not hold. For example, take a connected space $X$ that is not quasi-compact, then $\pi_{0}(X)$ and $\Spec(R)$ will be singletons and so the map $f:\pi_{0}(X)\rightarrow\Spec(R)$ will be surjective and closed. 

The counterexample given in Remark \ref{Remark cc 3} shows that if a continuous map of topological spaces $f: X\rightarrow Y$ has the property that $f^{-1}(C)$ is connected for all connected components $C$ in $Y$, then the continuous (bijective) map  $\pi_{0}(f): \pi_{0}(X)\rightarrow \pi_{0}(Y)$ is not a homeomorphism in general. However, in a geometric case, it is a homeomorphism:  

\begin{corollary}\label{Coro qcqs cc 1} Let $X$ be a qcqs scheme and $R=\mathscr{O}_{X}(X)$. Then 
we have the following canonical homeomorphisms: 
$$\pi_{0}(X)\simeq\Spec(\mathscr{B}(R))\simeq
\pi_{0}(\Spec(R)).$$ 
\end{corollary}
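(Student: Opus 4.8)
The plan is to realize both $\pi_{0}(X)$ and $\pi_{0}(\Spec(R))$ as $\Spec(\mathscr{B}(R))$ by applying Lemma \ref{Lemma I} separately to the two underlying spaces, and then to glue the resulting homeomorphisms through their common middle term. The engine is Lemma \ref{Lemma I}, which for a topological space $Y$ produces a canonical continuous map $\pi_{0}(Y)\to\Spec(\Clop(Y))$; the task is to check that under our hypotheses this map is actually a homeomorphism, and then to transport $\Clop$ to $\mathscr{B}(R)$ along the ring isomorphisms already established.

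First I would treat the scheme $X$. Applying Lemma \ref{Lemma I} with $\Clop(X)$ in the role of the ring, I obtain a canonical continuous map $\pi_{0}(X)\to\Spec(\Clop(X))$. Since $X$ is qcqs its underlying space is quasi-compact, so by Lemma \ref{Lemma I}(iii) this map is surjective and closed; and since $X$ is qcqs (hence spectral) every quasi-component of $X$ is connected by Corollary \ref{Coro iv}, so by Lemma \ref{Lemma I}(ii) the map is also injective. A continuous closed bijection is a homeomorphism (its inverse is continuous precisely because the original map is closed), whence $\pi_{0}(X)\simeq\Spec(\Clop(X))$. Finally Lemma \ref{Theorem lrs 5} gives a canonical isomorphism of Boolean rings $\mathscr{B}(R)\simeq\Clop(X)$, and applying the homeomorphism-preserving functor $\Spec$ yields the canonical homeomorphism $\pi_{0}(X)\simeq\Spec(\mathscr{B}(R))$.

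Next I would run the identical argument on the affine scheme $\Spec(R)$. Its underlying space is spectral, hence quasi-compact, and every quasi-component of $\Spec(R)$ is connected by Corollary \ref{Lemma iii}; so Lemma \ref{Lemma I} again upgrades to a homeomorphism $\pi_{0}(\Spec(R))\simeq\Spec(\Clop(\Spec(R)))$. Using the classical isomorphism $\mathscr{B}(R)\simeq\Clop(\Spec(R))$, $e\mapsto D(e)$, recorded just before Lemma \ref{Theorem lrs 5} (\cite[Theorem 3.1]{Tarizadeh}), and once more applying $\Spec$, I get $\pi_{0}(\Spec(R))\simeq\Spec(\mathscr{B}(R))$. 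Composing the two homeomorphisms through the common term $\Spec(\mathscr{B}(R))$ produces the asserted chain, with all maps canonical; one further checks that the induced map $\pi_{0}(X)\to\pi_{0}(\Spec(R))$ is nothing but $\pi_{0}(f)$ for the canonical morphism $f:X\to\Spec(R)$.

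The parts that are routine — continuity and surjectivity — are already packaged inside Lemma \ref{Lemma I}, and the ``continuous closed bijection is a homeomorphism'' step is purely formal. The step carrying the real weight, and the only place the geometry enters, is the \emph{injectivity}, which by Lemma \ref{Lemma I}(ii) is equivalent to the connectedness of every quasi-component; this is exactly what Theorem \ref{Theorem I T-D-F} and its corollaries supply, and it is what separates the present geometric situation from the general continuous map of Remark \ref{Remark cc 3}, where $\pi_{0}(f)$ is a continuous bijection but fails to be a homeomorphism.
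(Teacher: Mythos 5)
Your proof is correct and follows essentially the same route as the paper: identify $\mathscr{B}(R)\simeq\Clop(X)$ via Lemma \ref{Theorem lrs 5}, then upgrade the map of Lemma \ref{Lemma I} to a homeomorphism using quasi-compactness and Corollary \ref{Coro iv} (the paper obtains the affine case $\pi_{0}(\Spec(R))\simeq\Spec(\mathscr{B}(R))$ as a particular instance of the same argument, exactly as you do with Corollary \ref{Lemma iii}). Your explicit remark that a continuous closed bijection is a homeomorphism is the one step the paper leaves implicit, and it is stated correctly.
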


\begin{proof} By Lemma \ref{Theorem lrs 5}, the map $\mathscr{B}(R)\rightarrow\Clop(X)$ given by $s\mapsto X_{s}$ is an isomorphism of rings. Hence, their prime spectra are homeomorphic $\Spec(\mathscr{B}(R))\simeq\Spec(\Clop(X))$. By Corollary \ref{Coro iv}, every quasi-component of $X$ is connected. Then by Lemma \ref{Lemma I}, $\pi_{0}(X)\simeq
\Spec(\mathscr{B}(R))$. In particular, for any ring $R$, we have the canonical homeomorphism $\pi_{0}(\Spec(R))\simeq
\Spec(\mathscr{B}(R))$. This completes the proof. 
\end{proof}

\begin{example} We first show with an example that there is a scheme $X$ such that $\pi_{0}(X)$, the space of connected components, is not necessarily homeomorphic to $\pi_{0}(\Spec(R))$
where $R=\mathscr{O}_{X}(X)$. For instance, consider the scheme $X=\coprod\limits_{I}\Spec(k)$, the disjoint union of copies of $\Spec(k)$ where $k$ is a field, and the index set $I$ is infinite. Then each point of $X$ is a connected component, and so $\pi_{0}(X)=I$ with the discrete topology. But $R=\mathscr{O}_{X}(X)=\prod\limits_{I}k$ and so $\Spec(R)$ is the Stone-Čech compactification of the infinite discrete space $I$ and so has the cardinality $2^{2^{|I|}}$. But $\Spec(R)$ is totally disconnected. In fact, every zero-dimensional affine scheme and more generally every quasi-separated zero-dimensional scheme is totally disconnected (see \cite[Tag 0CKV]{Johan}). Then $\pi_{0}(\Spec(R))=\Spec(R)$ has many points than $\pi_{0}(X)$, the two spaces are not even in bijection, let alone homeomorphic. 

Next, consider the natural morphism $f:X\rightarrow \Spec(R)$. Then for each point $x=(0,i)\in X$ with $0\in X_{i}=\Spec(k)$ we have $f(x)=\pi_{i}^{-1}(0)$ where $\pi_{i}:R=\prod\limits_{I}k\rightarrow k_{i}=k$ is the projection map. Since the index set $I$ is infinite, there are (so many) prime ideals in $R$ which are not of the form $\pi_{i}^{-1}(0)$ for some $i$. This shows that if $P$ is  such a prime ideal then $f^{-1}(P)$ is empty. Thus we also find an example of a scheme $X$ such that $\Spec(R)$ has a connected component $C$ for which $f^{-1}(C)$ is not connected where $f:X\rightarrow\Spec(R)$ is the natural morphism. 
\end{example}

There are also examples of schemes $X$ such that $\pi_{0}(X)$ and $\pi_{0}(\Spec(R))$ are homeomorphic where $R=\mathscr{O}_{X}(X)$, but $X$ fails to be quasi-compact or quasi-separated.

\begin{corollary}\label{Coro III} If $X$ is a compact space or a quasi-spectral space, then $\pi_{0}(X)\simeq\Spec(\Clop(X))$.
\end{corollary}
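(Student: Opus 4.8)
The plan is to apply Lemma \ref{Lemma I} with $R=\Clop(X)$ and show that, under either hypothesis, the canonical map $f:\pi_{0}(X)\rightarrow\Spec(\Clop(X))$ constructed there (sending $[x]\mapsto M_{x}\cap R$) is simultaneously a continuous bijection and a closed map, and is therefore a homeomorphism. Thus the entire argument reduces to checking that each of the two stated hypotheses supplies exactly the two inputs that Lemma \ref{Lemma I} needs.

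First I would verify that in both cases $X$ is quasi-compact. If $X$ is quasi-spectral this is part of the definition, and if $X$ is compact it holds since a compact space is by convention quasi-compact and Hausdorff. This activates part (iii) of Lemma \ref{Lemma I}, so that $f$ is surjective and closed.

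Next I would establish that every quasi-component of $X$ is connected. When $X$ is quasi-spectral this is precisely Theorem \ref{Theorem I T-D-F}; when $X$ is compact it is the classical topological theorem recalled in the introduction, asserting that every quasi-component of a compact space is connected. By part (ii) of Lemma \ref{Lemma I}, connectedness of all quasi-components is exactly equivalent to the injectivity of $f$.

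Combining these observations, $f$ is continuous by part (i), bijective (surjective by (iii) together with injective by (ii)), and closed by part (iii). A continuous closed bijection is automatically a homeomorphism, since closedness of $f$ means precisely that $f^{-1}$ is continuous; hence $\pi_{0}(X)\simeq\Spec(\Clop(X))$. I do not expect any genuine obstacle here, as all the substantive content resides in Theorem \ref{Theorem I T-D-F} and in the three parts of Lemma \ref{Lemma I}. The only care required is to confirm that \emph{each} of the two hypotheses delivers \emph{both} quasi-compactness and connectedness of quasi-components, so that the surjective–closed half and the injective half of Lemma \ref{Lemma I} can be invoked together for a single map.
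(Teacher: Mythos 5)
Your proposal is correct and follows essentially the same route as the paper: the paper's proof likewise cites the classical theorem (Engelking) in the compact case and Theorem \ref{Theorem I T-D-F} in the quasi-spectral case to get connectedness of quasi-components, and then invokes Lemma \ref{Lemma I}. You have merely made explicit the bookkeeping (continuity from (i), injectivity from (ii), surjectivity and closedness from (iii), and that a continuous closed bijection is a homeomorphism) that the paper leaves to the reader.
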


\begin{proof} Every quasi-component of $X$ is connected (apply \cite[Theorem 6.1.23]{Engelking} in the compact case, and Theorem \ref{Theorem I T-D-F}
in the quasi-spectral case). Then apply Lemma \ref{Lemma I}.
\end{proof}

\begin{corollary}\label{Coro v-5} Let $X$ be a topological space. Then $X$ is compact totally disconnected if and only if $X\simeq\Spec(\Clop(X))$.
\end{corollary}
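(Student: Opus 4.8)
The plan is to prove the two implications separately, in each case reducing to facts already established in the excerpt. Write $B=\Clop(X)$, a Boolean ring, and recall from Corollary \ref{Coro III} that whenever $X$ is compact there is a canonical homeomorphism $\pi_{0}(X)\simeq\Spec(B)$.

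For the implication ($\Leftarrow$), it suffices to show that $\Spec(B)$ is compact and totally disconnected, since then any space homeomorphic to it inherits these properties. The space $\Spec(B)$ is quasi-compact, because the prime spectrum of any commutative ring is quasi-compact. It is Hausdorff because every prime ideal of the Boolean ring $B$ is maximal (the quotient of $B$ by a prime is a Boolean integral domain, hence $\mathbb{Z}/2$), and as recalled in the proof of Lemma \ref{Lemma I}(iii) this forces $\Spec(B)$ to be Hausdorff. Finally, each basic open $D(e)$ with $e\in B$ is clopen, since $e$ is idempotent and hence $V(e)=D(1-e)$; thus $\Spec(B)$ has a basis of clopen sets, and any two distinct primes $P\neq Q$ are separated by such a clopen $D(e)$ (taking $e\in P\setminus Q$, say), so no connected subset can contain two points. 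Therefore $\Spec(B)$ is compact totally disconnected, and so is $X$.

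For the implication ($\Rightarrow$), assume $X$ is compact and totally disconnected. By Corollary \ref{Coro III} we already have $\pi_{0}(X)\simeq\Spec(B)$, so it remains only to produce a homeomorphism $X\simeq\pi_{0}(X)$. Consider the canonical quotient map $g:X\rightarrow\pi_{0}(X)$, $x\mapsto[x]$. Since $X$ is totally disconnected, every connected component is a singleton, so $g$ is a continuous bijection. I would then show that $\pi_{0}(X)$ is Hausdorff: given $[x]\neq[y]$, the classical theorem (every quasi-component of a compact space is connected) gives $q(x)=[x]=\{x\}$, so there is a clopen $A$ with $x\in A$ and $y\notin A$; as $A$ is a union of connected components it is $g$-saturated, whence $g(A)$ and $\pi_{0}(X)\setminus g(A)=g(X\setminus A)$ are disjoint open sets separating $[x]$ and $[y]$. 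A continuous bijection from a quasi-compact space onto a Hausdorff space is a homeomorphism, so $g$ is a homeomorphism and $X\simeq\pi_{0}(X)\simeq\Spec(B)$.

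The routine parts are the standard properties of the spectrum of a Boolean ring; the one step requiring genuine care is verifying in the forward direction that the quotient map $g$ is a homeomorphism, equivalently that $\pi_{0}(X)$ is Hausdorff. This is where compactness is essential: it is precisely the classical identification of connected components with quasi-components for compact spaces that lets the clopen sets separate the (singleton) components, and it is the compactness of $X$ again that upgrades the continuous bijection $g$ into a homeomorphism.
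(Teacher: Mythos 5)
Your proof is correct and takes essentially the same route as the paper: the reverse implication is exactly the paper's appeal to the fact that the prime spectrum of a Boolean (zero-dimensional) ring is compact and totally disconnected, and the forward implication reduces, as in the paper, to the homeomorphism $\pi_{0}(X)\simeq\Spec(\Clop(X))$ of Corollary \ref{Coro III}. The only inefficiency is your Hausdorffness argument for $\pi_{0}(X)$: since $X$ is totally disconnected the quotient map $g:X\rightarrow\pi_{0}(X)$ is a continuous bijection, and a bijective quotient map is automatically a homeomorphism (for open $V\subseteq X$ one has $g^{-1}(g(V))=V$ open, so $g(V)$ is open), so neither the classical quasi-component theorem nor the compact-to-Hausdorff trick is needed at that step.
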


\begin{proof} The implication ``$\Rightarrow$" follows from Corollary \ref{Coro III}. The reverse implication follows from the fact that the prime spectrum of a zero dimensional ring is compact totally disconnected. 
\end{proof}

Both the new theorem (every quasi-component of a quasi-spectral space is connected) and the classical theorem (every quasi-component of a compact space is connected) are supplemented by the following subtle examples showing that there are quasi-compact spaces that are neither Hausdorff nor quasi-spectral, but whose quasi-components can be connected or disconnected.   

\begin{example} We give an example of a quasi-compact space that is neither Hausdorff nor quasi-spectral, and has a quasi-component that is not connected. To this end, let $X=\{1/n : n\geqslant1\}\cup\{0,b\}$ with $b$ is a (fixed) rational number other than 0 and all $1/n$. The topology over $X$ is consisting of all $U$ such that $U$ is a subset of $S=\{1/n : n\geqslant1\}$ ($S$ is considered discrete) or if $0 \in U$ (resp. if $b \in U$) then $U$ contains almost all of the $1/n$, i.e., $S\setminus U$ is a finite set. Then with this topology, the sequence $(1/n)$ is convergent to both $0$ and $b$. Hence, the space $X$ is quasi-compact, but it is not Hausdorff. Moreover, it can be seen that the intersection of all clopen subsets of $X$ containing the point $0$ equals $\{0,b\}$. But $\{0,b\}$ is not connected, because both $S\cup\{0\}$ and $S\cup\{b\}$ are (quasi-compact) open subsets of $X$. Also, $X$ is not quasi-spectral, because $(S\cup\{0\})\cap(S\cup\{b\})=S$ is not quasi-compact.  
\end{example}

\begin{example} Here we give an example of a quasi-compact space that is neither Hausdorff nor quasi-spectral, but every quasi-component is connected. To achieve this purpose, take $X=\{-1/n : n\geqslant1\}\cup[0,1]$, the union of $S=\{-1/n : n\geqslant1\}$ and the closed interval $[0,1]$. Similarly above, the topology over $X$ is consisting of all $U$ such that $U$ is a subset of $S$ or if $x\in U$ for some $x\in[0,1]$ then $U$ is the union of an open neighbourhood of $x$ in $[0,1]$ and the complement of a finite subset of $S$. Then with this topology, every element of $[0,1]$ is a limit point of the sequence $S$. Hence, the space $X$ is quasi-compact, but it is not Hausdorff. Moreover, every quasi-component of $X$ is connected, because the quasi-component of each $x\in S$ equals the connected subset $\{x\}$, and the quasi-component of 0 equals $[0,1]$ which is connected. Finally, $X$ is not quasi-spectral. Because every closed subspace of a quasi-spectral space is quasi-spectral, but the closed subspace $[0,1]$ of $X$ is not quasi-spectral. Indeed, a connected Hausdorff space with nontrivial topology cannot be a quasi-spectral space. 
\end{example}

\section{Connected components of projective spaces}

To prove the main result of this section, we need the following lemmas: 

\begin{lemma}\label{Lemma Proj 5} Let $f:X\rightarrow Y$ be a continuous map of topological spaces such that all fibers of $f$ are connected, and one of the following conditions hold:  \\
$1)$ $f$ is a closed map. \\
$2)$ $f$ is an open map. \\
$3)$ $f$ has a section. \\
Then we have: \\
$\mathbf{(i)}$ $X$ is connected if and only if $Y$ is connected. \\
$\mathbf{(ii)}$ A subset $C$ of $Y$ is connected if and only if $f^{-1}(C)$ is a connected subset of $X$. 
\end{lemma}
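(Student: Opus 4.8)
The plan is to treat part (i) as the core of the argument and to obtain part (ii) as a formal consequence by restricting $f$ to a preimage. The first thing I would record is that the hypothesis that all fibers of $f$ are connected forces each fiber to be nonempty (a connected space is nonempty by convention), so $f$ is automatically surjective; this surjectivity is used throughout.

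For part (i), the forward implication is immediate: a continuous surjection carries the connected space $X$ onto $Y$, so $Y$ is connected, and this direction needs none of the three conditions. For the reverse implication I would argue by contradiction. Suppose $Y$ is connected but $X = U \sqcup V$ for disjoint nonempty clopen sets $U,V$. Since each fiber $f^{-1}(y)$ is connected and meets the clopen decomposition, it lies entirely in $U$ or entirely in $V$; this partitions $Y$ as $Y = Y_U \sqcup Y_V$, where $Y_U = \{y : f^{-1}(y)\subseteq U\}$ and $Y_V$ is defined symmetrically, and one checks $f^{-1}(Y_U)=U$ and $f^{-1}(Y_V)=V$. The crux is then to see that $Y_U$ and $Y_V$ form a nontrivial open (or closed) partition of $Y$, contradicting its connectedness. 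Using surjectivity together with the fiber condition, $Y_U = f(U)$ and $Y_V = f(V)$; so if $f$ is closed (resp. open) these are closed (resp. open), handling cases $1)$ and $2)$. If instead $f$ admits a section $s$, then $Y_U = s^{-1}(U)$ and $Y_V = s^{-1}(V)$ are open by continuity of $s$, handling case $3)$. In every case the nonemptiness of $U,V$ together with surjectivity gives $Y_U,Y_V \neq \emptyset$, completing the contradiction.

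For part (ii) the strategy is to reduce to part (i) applied to the restricted map $g = f|_{f^{-1}(C)} : f^{-1}(C)\to C$. I would first note that $g$ is surjective with the same fibers as $f$ over points of $C$, so all fibers of $g$ are connected; statement (ii) is then exactly the conclusion of (i) for $g$, provided $g$ inherits whichever of the three conditions $f$ enjoys. A section $s$ of $f$ restricts to a section $s|_{C}$ of $g$, since $f(s(y))=y\in C$ forces $s(y)\in f^{-1}(C)$. For the closed (resp. open) case I would prove the general fact that such a map restricts to a closed (resp. open) map over any preimage: for $Z$ closed in $f^{-1}(C)$, writing $\overline{Z}$ for its closure in $X$, one has $g(Z) = f(\overline{Z})\cap C$, which is closed in $C$; the open case is identical, with $\overline{Z}$ replaced by an ambient open set $W'$ with $Z = W'\cap f^{-1}(C)$. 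The empty-$C$ case is trivial, since then both $C$ and $f^{-1}(C)$ are empty, hence non-connected.

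I expect the main obstacle to be the bookkeeping in part (i) that identifies $Y_U$ and $Y_V$ simultaneously as the images $f(U),f(V)$ and as the section-preimages $s^{-1}(U),s^{-1}(V)$, since this is precisely where the three separate hypotheses are funneled into a single contradiction. By contrast, the inheritance claims used in part (ii) are routine point-set verifications.
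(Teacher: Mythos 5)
Your proof is correct and takes essentially the same approach as the paper: part (i) rests on the observation that a clopen subset of $X$ is a union of fibers, so that its image under a closed (resp.\ open) map, or its preimage under a section, yields a clopen partition of $Y$, and part (ii) follows by applying (i) to the restriction $f^{-1}(C)\rightarrow C$. You are in fact more careful than the paper in part (ii): the paper simply says ``apply (i) to the induced map'' without checking that the closed/open/section hypotheses are inherited by the restriction, whereas your verification of that inheritance (e.g.\ $g(Z)=f(\overline{Z})\cap C$ in the closed case) closes a gap the paper leaves implicit.
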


\begin{proof} (i): The map $f$ is surjective, since all fibers of $f$ are connected and hence they are nonempty. Thus if $X$ is connected then $Y=f(X)$ is connected as well. Conversely, assume that $Y$ is connected. 
Since $f$ is surjective, $X$ is nonempty. 
Assume that $f$ is a closed map. Let $U$ be a nonempty clopen of $X$. Then $f(U)$ and $f(U^{c})$ are closed subsets of $Y$ where $U^{c}=X\setminus U$. It can be easily seen that $f(U)\cap f(U^{c})=\emptyset$, because the fibers of $f$ are connected. This shows that $f(U)$ is a nonempty clopen subset of $Y$ and $U=f^{-1}(f(U))$. Since $Y$ is connected, $f(U)=Y$ and so $U=X$. Hence, $X$ is connected in this case. The open map case is proven similarly. Assume $f$ has a section, i.e., there is a continuous map $g:Y\rightarrow X$ such that $fg:Y\rightarrow Y$ is the identity map. Then $g(Y)$ is a connected subset of $X$.  Let $U$ be a clopen of $X$. We have $g(Y)\subseteq U$ or $g(Y)\subseteq U^{c}$. Suppose $g(Y)\subseteq U$. Then all fibers of $f$ meet $U$, because if $y\in Y$ then  $g(y)\in f^{-1}(y)$. 
Thus all fibers of $f$ are contained in $U$, since they are connected. 
This shows that $U^{c}$ is empty, i.e., $U=X$. If $g(Y)\subseteq U^{c}$ then $U$ will be empty. Thus $X$ is also connected in this case. \\
(ii): Apply (i) for the induced map $g:f^{-1}(C)\rightarrow C$ which is given by $x\mapsto f(x)$.     
\end{proof}

Recall that if $I$ is a graded ideal of an $\mathbb{N}$-graded ring $R$, then by $V_{+}(I)$ we mean $V(I)\cap\Proj(R)$.

Also recall that if $Y$ is a subspace of a topological space $X$ and $y\in Y$ then the closure of $\{y\}$ in $Y$ equals to $\overline{\{y\}}\cap Y$ where $\overline{\{y\}}$ is the closure of $\{y\}$ in $X$.
This topological observation leads us to the following geometric conclusion which, in particular,  characterizes the irreducible components of projective spaces:

\begin{lemma}\label{Lemma very nice 89} Let $R$ be an $\mathbb{N}$-graded ring. Then the irreducible closed subsets of $\Proj(R)$ are exactly of the form $V_{+}(\mathfrak{p})$ with $\mathfrak{p}\in\Proj(R)$. In addition, the irreducible components of $\Proj(R)$ are exactly of the form $V_{+}(\mathfrak{p})$ with $\mathfrak{p}\in\Min(R)\cap\Proj(R)$.
\end{lemma}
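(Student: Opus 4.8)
The plan is to realise $\Proj(R)$ as a subspace of $\Spec(R)$ and transport the classical correspondence between irreducible closed subsets and their generic points. I would treat the two assertions separately. For the easy inclusion of the first assertion, fix $\mathfrak{p}\in\Proj(R)$. By the subspace-closure formula recalled just above the statement, the closure of $\{\mathfrak{p}\}$ in $\Proj(R)$ is $\overline{\{\mathfrak{p}\}}\cap\Proj(R)=V(\mathfrak{p})\cap\Proj(R)=V_{+}(\mathfrak{p})$, where $\overline{\{\mathfrak{p}\}}=V(\mathfrak{p})$ is the closure in $\Spec(R)$. Since a singleton is irreducible and the closure of an irreducible set is irreducible, $V_{+}(\mathfrak{p})$ is an irreducible closed subset.

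For the converse, let $Z$ be an irreducible closed subset, write $Z=V_{+}(J)$ for a graded ideal $J$, and set $\mathfrak{p}=\bigcap_{\mathfrak{q}\in Z}\mathfrak{q}$, which is a homogeneous ideal. The crux is to show $\mathfrak{p}$ is prime via the homogeneous primality criterion: given homogeneous $a,b$ with $ab\in\mathfrak{p}$, every $\mathfrak{q}\in Z$ contains $ab$ and hence, being prime, contains $a$ or $b$, so $Z\subseteq V_{+}(a)\cup V_{+}(b)$; irreducibility of $Z$ then forces $Z\subseteq V_{+}(a)$ or $Z\subseteq V_{+}(b)$, that is, $a\in\mathfrak{p}$ or $b\in\mathfrak{p}$. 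Since $Z\neq\emptyset$, picking $\mathfrak{q}\in Z$ and using $\mathfrak{p}\subseteq\mathfrak{q}$ together with $R_{+}\not\subseteq\mathfrak{q}$ gives $R_{+}\not\subseteq\mathfrak{p}$, so $\mathfrak{p}\in\Proj(R)$. Finally $Z=V_{+}(\mathfrak{p})$ follows by double inclusion: $J\subseteq\mathfrak{q}$ for all $\mathfrak{q}\in Z$ yields $J\subseteq\mathfrak{p}$, hence $V_{+}(\mathfrak{p})\subseteq Z$, while $\mathfrak{p}\subseteq\mathfrak{q}$ for each $\mathfrak{q}\in Z$ yields $Z\subseteq V_{+}(\mathfrak{p})$.

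For the second assertion I would use the order-reversal $V_{+}(\mathfrak{p})\subseteq V_{+}(\mathfrak{p}')\iff\mathfrak{p}'\subseteq\mathfrak{p}$ for $\mathfrak{p},\mathfrak{p}'\in\Proj(R)$, checked by evaluating at $\mathfrak{p}$ itself. By the first assertion the irreducible components are the maximal $V_{+}(\mathfrak{p})$, so they correspond exactly to the primes $\mathfrak{p}$ that are minimal in $\Proj(R)$. It remains to identify these with $\Min(R)\cap\Proj(R)$: any $\mathfrak{p}\in\Proj(R)$ contains a minimal prime $\mathfrak{p}_{0}$ of $R$, which is homogeneous because $R$ is graded and which lies in $\Proj(R)$ since $\mathfrak{p}_{0}\subseteq\mathfrak{p}$ forces $R_{+}\not\subseteq\mathfrak{p}_{0}$; minimality of $\mathfrak{p}$ in $\Proj(R)$ then gives $\mathfrak{p}=\mathfrak{p}_{0}\in\Min(R)$, and conversely any element of $\Min(R)\cap\Proj(R)$ is a fortiori minimal in $\Proj(R)$. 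I expect the main obstacle to be the primality of $\mathfrak{p}=\bigcap_{\mathfrak{q}\in Z}\mathfrak{q}$ in the converse of the first assertion—an ad hoc verification that $\Proj(R)$ is sober—together with the standard fact that minimal primes of a graded ring are homogeneous, which is what makes the passage from ``minimal in $\Proj(R)$'' to $\Min(R)\cap\Proj(R)$ go through.
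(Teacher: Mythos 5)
Your proof is correct, but it takes a genuinely different route from the paper's at the key step. The paper disposes of the first assertion in one line by invoking the general fact that the underlying space of any scheme is sober: an irreducible closed subset $Z\subseteq\Proj(R)$ has a generic point $\mathfrak{p}$, and the subspace-closure formula then gives $Z=V_{+}(\mathfrak{p})$. You instead reprove soberness of $\Proj(R)$ by hand: you take $\mathfrak{p}=\bigcap_{\mathfrak{q}\in Z}\mathfrak{q}$, verify it is homogeneous, prime (via the homogeneous primality criterion combined with irreducibility of $Z$), lies in $\Proj(R)$, and is a generic point of $Z$. This costs you two extra standard inputs---the homogeneous primality criterion and the description of the closed subsets of $\Proj(R)$ as the $V_{+}(J)$ with $J$ graded---but buys complete self-containedness: your argument uses only the topology of $\Proj(R)$ as a subspace of $\Spec(R)$ and never that $\Proj(R)$ is a scheme, whereas the paper delegates the hard content to general scheme theory. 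For the second assertion your argument and the paper's are essentially identical (existence of a minimal prime $\mathfrak{p}_{0}\subseteq\mathfrak{p}$, homogeneity of minimal primes of a graded ring, and a maximality argument); you merely package the maximality step as an explicit order-reversal statement $V_{+}(\mathfrak{p})\subseteq V_{+}(\mathfrak{p}')\iff\mathfrak{p}'\subseteq\mathfrak{p}$, checked by evaluating at $\mathfrak{p}$, where the paper argues directly that $V_{+}(\mathfrak{p})=V_{+}(\mathfrak{q})$ forces $\mathfrak{p}=\mathfrak{q}$.
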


\begin{proof} Let $Z$ be an irreducible closed subset of $\Proj(R)$. We know that the underlying space of every scheme is a sober space, i.e., every irreducible closed subset has a (unique) generic point. So there exists a point $\mathfrak{p}\in\Proj(R)$ such that $Z=c\ell(\{\mathfrak{p}\})$ where $c\ell(\{\mathfrak{p}\})$ is the closure of $\{\mathfrak{p}\}$ in $\Proj(R)$. But $\Proj(R)$ is a subspace of $\Spec(R)$. Then $Z=c\ell(\{\mathfrak{p}\})=
\overline{\{\mathfrak{p}\}}\cap\Proj(R)=
V(\mathfrak{p})\cap \Proj(R)=V_{+}(\mathfrak{p})$ where $\overline{\{\mathfrak{p}\}}=V(\mathfrak{p})$ is the closure of $\{\mathfrak{p}\}$ in $\Spec(R)$. \\ 
This argument also shows that $V_{+}(\mathfrak{p})$ is an irreducible closed subset of $\Proj(R)$ for all $\mathfrak{p}\in\Proj(R)$. \\
If $Z$ is an irreducible component of $\Proj(R)$ then $Z$ is an irreducible closed subset and so $Z=V_{+}(\mathfrak{p})$ for some $\mathfrak{p}\in\Proj(R)$. Then we claim that $\mathfrak{p}$ is a minimal prime of $R$. Indeed, there exists a minimal prime $\mathfrak{q}$ of $R$ with $\mathfrak{q}\subseteq\mathfrak{p}$. But every minimal prime of an $\mathbb{N}$-graded ring is a graded ideal. 
It follows that $\mathfrak{q}\in\Proj(R)$. Then $Z=V_{+}(\mathfrak{p})\subseteq V_{+}(\mathfrak{q})$ and so $V_{+}(\mathfrak{p})= V_{+}(\mathfrak{q})$, because $Z$ is an irreducible component. It follows that $\mathfrak{q}\in V_{+}(\mathfrak{p})$ and so $\mathfrak{p}=\mathfrak{q}$. \\
The above argument also show that for any minimal prime $\mathfrak{p}$ of $R$ with $\mathfrak{p}\in\Proj(R)$, then $V_{+}(\mathfrak{p})$ is an irreducible component of $\Proj(R)$. This completes the proof.
\end{proof}

\begin{remark} Regarding the above lemma, let $\mathfrak{p}$ be a minimal prime of an $\mathbb{N}$-graded ring $R=\bigoplus\limits_{n\geqslant0}R_{n}$. Then $\mathfrak{p}\notin\Proj(R)$ if and only if $\mathfrak{p}=\mathfrak{p}_{0}\oplus R_{+}$ where $\mathfrak{p}_{0}=R_{0}\cap\mathfrak{p}$ is a minimal prime of $R_{0}$. As a specific example, consider the 
$\mathbb{N}$-graded ring $R=k[x,y]/(xy)$ with $\deg(x)=0$ and $\deg(y)=1$. Then the minimal prime $P=(y)/(xy)$ of $R$ is not a member of $\Proj(R)$. \\
If $I$ is a graded ideal of an $\mathbb{N}$-graded ring $R$, then we have $V_{+}(I)=V_{+}(I\cap R_{+})$ where $R_{+}=\bigoplus\limits_{n\geqslant1}R_{n}$ is the irrelevant ideal of $R$. 
It can be also seen that the closed subsets of $\Proj(R)$ are exactly of the form $V_{+}(I)$ where $I$ is a unique graded radical ideal of $R$ contained in $R_{+}$. 
\end{remark}

\begin{corollary}\label{Lemma proj 77} Let $R$ be an $\mathbb{N}$-graded ring. \\
$\mathbf{(i)}$ If $R$ has a unique minimal prime $\mathfrak{p}$, then $\Proj(R)$ is irreducible if and only if $\mathfrak{p}\in\Proj(R)$, or equivalently, $R_{+}$ is not contained in $\mathfrak{p}$. \\
$\mathbf{(ii)}$ If $R$ is an integral domain, then $\Proj(R)$ is irreducible if and only if $R_{+}\neq0$. 
\end{corollary}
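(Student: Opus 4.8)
The plan is to reduce both parts to Lemma \ref{Lemma very nice 89}, which identifies both the irreducible closed subsets and the irreducible components of $\Proj(R)$. For part \textbf{(i)}, suppose first that $\mathfrak{p}\in\Proj(R)$. The key point is that since $\mathfrak{p}$ is the \emph{unique} minimal prime of $R$, every prime of $R$ contains $\mathfrak{p}$; hence every point of $\Proj(R)$ lies in $V(\mathfrak{p})$, which gives $\Proj(R)=V(\mathfrak{p})\cap\Proj(R)=V_{+}(\mathfrak{p})$. By the first assertion of Lemma \ref{Lemma very nice 89}, $V_{+}(\mathfrak{q})$ is irreducible whenever $\mathfrak{q}\in\Proj(R)$, so in particular $\Proj(R)=V_{+}(\mathfrak{p})$ is irreducible.

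For the converse in \textbf{(i)}, I would use the description of the irreducible \emph{components}. If $\Proj(R)$ is irreducible then it is nonempty and is its own unique irreducible component. By the second assertion of Lemma \ref{Lemma very nice 89} this component has the form $V_{+}(\mathfrak{q})$ with $\mathfrak{q}\in\Min(R)\cap\Proj(R)$; since $\Min(R)=\{\mathfrak{p}\}$, this forces $\mathfrak{p}\in\Proj(R)$. It then remains to record the equivalence $\mathfrak{p}\in\Proj(R)\Leftrightarrow R_{+}\not\subseteq\mathfrak{p}$: because a minimal prime of an $\mathbb{N}$-graded ring is automatically a graded ideal (as already used in the proof of Lemma \ref{Lemma very nice 89}), the prime $\mathfrak{p}$ is graded, and by the very definition of $\Proj(R)$ such a graded prime belongs to $\Proj(R)$ exactly when it fails to contain the irrelevant ideal $R_{+}$.

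Part \textbf{(ii)} should then follow by specializing part \textbf{(i)}. An integral domain has the zero ideal $(0)$ as its unique minimal prime, and $(0)$ is trivially a graded ideal, so $(0)\in\Proj(R)$ iff $R_{+}\not\subseteq(0)$, i.e. iff $R_{+}\neq0$; combined with \textbf{(i)} this yields that $\Proj(R)$ is irreducible iff $R_{+}\neq0$. I expect no serious obstacle here: the whole argument is essentially bookkeeping built on top of Lemma \ref{Lemma very nice 89}, and the only step requiring a moment's care is the identification $\Proj(R)=V_{+}(\mathfrak{p})$ in the forward direction of \textbf{(i)}, which rests squarely on the uniqueness of the minimal prime.
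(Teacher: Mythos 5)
Your proposal is correct and follows the same route as the paper, which simply states that (i) is a special case of Lemma \ref{Lemma very nice 89} and that (ii) follows from (i); you have merely supplied the bookkeeping (the identification $\Proj(R)=V_{+}(\mathfrak{p})$ via uniqueness of the minimal prime, the use of the irreducible-component description for the converse, and the gradedness of minimal primes) that the paper leaves implicit.
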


\begin{proof} The statement (i) is a special case of Lemma \ref{Lemma very nice 89}, and   
(ii) follows from (i).     
\end{proof}

\begin{corollary}\label{Coro special case 1} If $R$ is an integral domain, then $\mathbb{P}_{R}^{n}=\Proj(R[x_{0},\ldots,x_{n}])$ is irreducible. 
\end{corollary}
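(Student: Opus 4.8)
The plan is to deduce this directly from Corollary \ref{Lemma proj 77}(ii), applied not to $R$ itself but to the $\mathbb{N}$-graded ring $S=R[x_{0},\ldots,x_{n}]$, whose $\Proj$ is by definition $\mathbb{P}_{R}^{n}$. Here $S$ carries the standard grading in which every variable $x_{i}$ has degree $1$ while the scalars from $R$ sit in degree $0$, so that $S_{0}=R$ and the irrelevant ideal is $S_{+}=(x_{0},\ldots,x_{n})$. The whole point is simply to recognize that the graded ring to which the previous corollary should be applied is $S$, and then to check its two hypotheses.

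First I would verify that $S$ is an integral domain: since $R$ is an integral domain, the polynomial ring $S=R[x_{0},\ldots,x_{n}]$ over it is again an integral domain, which supplies the domain hypothesis. Next I would check that the irrelevant ideal is nonzero. Because an integral domain is by definition a nonzero ring, we have $x_{0}\neq0$ in $S$, and clearly $x_{0}\in S_{+}$; hence $S_{+}\neq0$. (Note that $n\geqslant0$, so at least the single variable $x_{0}$ is always available.) With both hypotheses in hand, Corollary \ref{Lemma proj 77}(ii) yields at once that $\Proj(S)=\mathbb{P}_{R}^{n}$ is irreducible, which completes the argument.

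I do not expect a genuine obstacle here: the only point requiring care is the bookkeeping of the grading convention, namely confirming that the relevant graded ring is $S$ rather than $R$, that $S$ inherits the domain property from $R$, and that its positive part $S_{+}$ is nonzero. All three are routine once the correct reading of Corollary \ref{Lemma proj 77}(ii) is fixed, so the statement is essentially an immediate specialization of that result.
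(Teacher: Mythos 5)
Your proposal is correct and follows exactly the paper's route: the paper also deduces the statement as a special case of Corollary \ref{Lemma proj 77}(ii) applied to the graded ring $R[x_{0},\ldots,x_{n}]$. You merely spell out the two hypothesis checks (that the polynomial ring over a domain is a domain, and that its irrelevant ideal $(x_{0},\ldots,x_{n})$ is nonzero), which the paper leaves implicit.
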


\begin{proof} This is a special case of Corollary \ref{Lemma proj 77}(ii). 
\end{proof}

\begin{theorem}\label{Theorem Proj} For any scheme $S$, the connected components of $\mathbb{P}_{S}^{n}=
\mathbb{P}_{\mathbb{Z}}^{n}\times_{\Spec(\mathbb{Z})}S$ are exactly of the form $f^{-1}(C)$
where  $f:\mathbb{P}_{S}^{n}\rightarrow S$ is the canonical morphism and $C$ is a connected component of $S$. 
\end{theorem}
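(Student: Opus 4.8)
The plan is to apply Lemma \ref{Lemma Proj 5} to the structure morphism $f:\mathbb{P}^n_S\rightarrow S$, regarded as a continuous map of underlying spaces, and then to upgrade the resulting statement about connected subsets into one about connected components by a routine partition argument.

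First I would check the two hypotheses of Lemma \ref{Lemma Proj 5}. For the fibers: since the formation of projective space commutes with base change, the fiber of $f$ over a point $s\in S$ is the projective space $\mathbb{P}^n_{\kappa(s)}$ over the residue field $\kappa(s)$. As $\kappa(s)$ is a field and hence an integral domain, Corollary \ref{Coro special case 1} shows that $\mathbb{P}^n_{\kappa(s)}$ is irreducible, so in particular connected; thus all fibers of $f$ are connected. For the second hypothesis I would verify condition $(3)$ of Lemma \ref{Lemma Proj 5}, namely that $f$ admits a section: the constant point $[1:0:\cdots:0]$ defines a section $S\rightarrow\mathbb{P}^n_S$ of $f$. (Condition $(1)$ would serve equally well, since $\mathbb{P}^n_{\mathbb{Z}}\rightarrow\Spec(\mathbb{Z})$ is proper and properness is stable under base change, so that $f$ is closed.)

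With Lemma \ref{Lemma Proj 5}(ii) available, I would conclude that a subset $C\subseteq S$ is connected if and only if $f^{-1}(C)$ is a connected subset of $\mathbb{P}^n_S$; in particular $f^{-1}(C)$ is connected whenever $C$ is a connected component of $S$. Since $f$ is surjective (its fibers are nonempty) and the components of $S$ partition $S$, the sets $f^{-1}(C)$ partition $\mathbb{P}^n_S$ into nonempty connected pieces. I would then argue maximality directly: if $f^{-1}(C)\subseteq K$ with $K$ connected, then $f(K)$ is connected and contains $C$, so $f(K)=C$ by maximality of the component $C$, whence $K\subseteq f^{-1}(f(K))=f^{-1}(C)$ and $K=f^{-1}(C)$; conversely any component $D$ of $\mathbb{P}^n_S$ satisfies $D\subseteq f^{-1}(C)$ for the component $C\supseteq f(D)$, forcing $D=f^{-1}(C)$. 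This identifies the components of $\mathbb{P}^n_S$ with the sets $f^{-1}(C)$, which is precisely the conclusion packaged in Remark \ref{Remark cc 3} once connectedness of each $f^{-1}(C)$ is known.

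The only genuinely geometric input is the verification that $f$ has connected fibers together with one of the three good properties of Lemma \ref{Lemma Proj 5}; the fiber-connectivity rests entirely on Corollary \ref{Coro special case 1} and the section is elementary, so I do not expect a serious obstacle there. The point requiring the most care is the passage from a ``connected subset'' to a ``connected component,'' i.e.\ the maximality argument above, but the partition structure makes this transparent.
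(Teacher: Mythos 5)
Your proposal is correct, and its skeleton coincides with the paper's proof: both apply Lemma \ref{Lemma Proj 5} to the structure morphism $f:\mathbb{P}^{n}_{S}\rightarrow S$, and both obtain connectedness of the fibers by identifying $f^{-1}(s)$ with $\mathbb{P}^{n}_{\kappa(s)}$ and invoking Corollary \ref{Coro special case 1}. The one genuine difference is which of the three auxiliary hypotheses of Lemma \ref{Lemma Proj 5} is verified. The paper uses condition $(1)$: $f$ is a closed map, citing the nontrivial fact that projective morphisms are universally closed. You primarily use condition $(3)$: the constant point $[1:0:\cdots:0]$, i.e.\ the base change to $S$ of the corresponding $\mathbb{Z}$-point of $\mathbb{P}^{n}_{\mathbb{Z}}$, gives a section of $f$. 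Your route is more elementary, since it avoids properness entirely; the paper's route has the advantage that it generalizes verbatim to any proper morphism with connected fibers, a generalization the paper records right after the theorem (and you hedge correctly by noting closedness works too). Finally, you write out the maximality argument upgrading ``$f^{-1}(C)$ is connected for every component $C$'' to ``the $f^{-1}(C)$ are exactly the components''; the paper leaves this step implicit, as it is packaged in Remark \ref{Remark cc 3}, and your argument is precisely the one that remark encodes, so this is a legitimate filling-in of detail rather than a divergence.
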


\begin{proof} To prove the assertion, we will apply Lemma \ref{Lemma Proj 5}. First, we show that 
all fibers of $f:\mathbb{P}_{S}^{n}\rightarrow S$ are connected. If $s\in S$ then $f^{-1}(s)$ with the induced topology is homeomorphic to the underlying space of the scheme $\mathbb{P}_{S}^{n}\times_{S}\Spec(\kappa(s))\simeq
\mathbb{P}^{n}_{\mathbb{Z}}\times_{\Spec(\mathbb{Z})}
\Spec(\kappa(s))=
\mathbb{P}_{\kappa(s)}^{n}$ where $\kappa(s)$ is the residue field of $S$ at the point $s$. But for any field $k$, by Corollary \ref{Coro special case 1}, the projective space $\mathbb{P}_{k}^{n}=\Proj k[x_{0},\ldots,x_{n}]$ is connected, even irreducible. 
Therefore, the fiber $f^{-1}(s)$ is connected.
It is well known that $f$ is a closed map, even universally closed (see e.g. \cite[Tag 01NH]{Johan}). This completes the proof.  
\end{proof} 

As we observed in Lemma \ref{Lemma Proj 5}, the above theorem holds more generally for any
morphism of schemes that satisfies the conditions of this lemma. For instance, a proper morphism of schemes with connected fibers is a typical example of such morphisms.

\begin{corollary}\label{Lemma Proj 6} A scheme $S$ is connected if and only if $\mathbb{P}_{S}^{n}$ is connected. 
\end{corollary}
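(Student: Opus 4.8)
The plan is to read the statement directly off the two preceding results, since everything needed is already in hand. Concretely, I would apply Lemma \ref{Lemma Proj 5}(i) to the canonical morphism $f:\mathbb{P}_{S}^{n}\rightarrow S$. The only thing to check is that this $f$ meets the hypotheses of that lemma, and both of these were in fact verified in the course of proving Theorem \ref{Theorem Proj}.

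First I would recall that $f$ has connected fibers: for each $s\in S$, the fiber $f^{-1}(s)$, with its induced topology, is homeomorphic to the underlying space of $\mathbb{P}_{\kappa(s)}^{n}=\Proj\,\kappa(s)[x_{0},\ldots,x_{n}]$, which by Corollary \ref{Coro special case 1} is irreducible and hence connected. Second, $f$ is a closed map (indeed universally closed). Thus $f$ satisfies the hypotheses of Lemma \ref{Lemma Proj 5} under its condition $1)$, and part (i) of that lemma gives exactly the asserted equivalence: $\mathbb{P}_{S}^{n}$ is connected if and only if $S$ is connected.

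An equivalent and equally short route would be to invoke Theorem \ref{Theorem Proj} itself, which establishes a bijective correspondence $C\mapsto f^{-1}(C)$ between the connected components of $S$ and those of $\mathbb{P}_{S}^{n}$; since a topological space is connected precisely when it has a single connected component, $S$ has exactly one component if and only if $\mathbb{P}_{S}^{n}$ does.

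There is no genuine obstacle in this corollary, as it is a formal consequence of the two results above. The single point deserving a moment's care is the nonemptiness built into the definition of connectedness, but this is automatic: $f$ is surjective (its fibers being nonempty), so $S=\emptyset$ forces $\mathbb{P}_{S}^{n}=\emptyset$ and conversely, and the equivalence holds vacuously in the empty case.
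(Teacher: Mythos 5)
Your proposal is correct and follows essentially the paper's route: the paper proves this corollary by citing Theorem \ref{Theorem Proj}, and your primary argument (applying Lemma \ref{Lemma Proj 5}(i) to the closed, connected-fibered morphism $f:\mathbb{P}_{S}^{n}\rightarrow S$) merely unpacks that theorem's own proof by one level, while your stated alternative is literally the paper's proof. The added remark about the empty case and surjectivity of $f$ is a sound, if minor, extra precaution given that the paper's definition of connectedness requires nonemptiness.
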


\begin{proof} This follows from Theorem \ref{Theorem Proj}.
\end{proof}

\begin{corollary}\label{Coro 10 proj} For any ring $R$, the connected components of $\mathbb{P}_{R}^{n}=\Proj R[x_{0},\ldots,x_{n}]$ are exactly of the form $f^{-1}(C)$ where $f:\mathbb{P}_{R}^{n}\rightarrow\Spec(R)$ is the canonical morphism and $C$ is a connected component of $\Spec(R)$. 
\end{corollary}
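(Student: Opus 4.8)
The plan is to obtain this statement as a direct specialization of Theorem \ref{Theorem Proj} to the affine base $S=\Spec(R)$. First I would set $S=\Spec(R)$ and identify the scheme $\mathbb{P}_{S}^{n}$ appearing in Theorem \ref{Theorem Proj} with $\Proj R[x_{0},\ldots,x_{n}]$. The key (and only nonformal) ingredient is the base-change formula for projective space: for the structure ring map $\mathbb{Z}\rightarrow R$ there is a canonical isomorphism of schemes
$$\mathbb{P}_{\mathbb{Z}}^{n}\times_{\Spec(\mathbb{Z})}\Spec(R)\simeq\Proj R[x_{0},\ldots,x_{n}],$$
which holds because the graded tensor product $\mathbb{Z}[x_{0},\ldots,x_{n}]\otimes_{\mathbb{Z}}R$ is canonically the graded ring $R[x_{0},\ldots,x_{n}]$, and taking $\Proj$ of this graded base change computes the fiber product over $\Spec(\mathbb{Z})$. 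Thus $\mathbb{P}_{\Spec(R)}^{n}=\mathbb{P}_{R}^{n}=\Proj R[x_{0},\ldots,x_{n}]$, and I would check that this isomorphism is compatible with the canonical structure morphisms, so that it carries the morphism $f:\mathbb{P}_{S}^{n}\rightarrow S$ of Theorem \ref{Theorem Proj} to the canonical morphism $f:\mathbb{P}_{R}^{n}\rightarrow\Spec(R)$.

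Next I would simply invoke Theorem \ref{Theorem Proj} with $S=\Spec(R)$. The theorem asserts that the connected components of $\mathbb{P}_{S}^{n}$ are exactly the sets $f^{-1}(C)$ as $C$ ranges over the connected components of $S=\Spec(R)$. Transporting this conclusion through the identification of the previous paragraph yields precisely the claim: the connected components of $\Proj R[x_{0},\ldots,x_{n}]$ are exactly the sets $f^{-1}(C)$, where $f:\mathbb{P}_{R}^{n}\rightarrow\Spec(R)$ is the canonical morphism and $C$ is a connected component of $\Spec(R)$.

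I do not expect any genuine obstacle here: once the base-change identification is in place, the statement is a word-for-word translation of Theorem \ref{Theorem Proj} into the affine-base case. If one wished to make the description more explicit, one could further note (combining Corollary \ref{Lemma iii} with the fiber-product description in Remark \ref{Remark fiber 4}) that each connected component $C=V(I)=\Spec(R/I)$ is cut out by a max-regular ideal $I$ of $R$, so that $f^{-1}(C)$ is identified with $\mathbb{P}_{R/I}^{n}=\Proj(R/I)[x_{0},\ldots,x_{n}]$; but this refinement is not needed to prove the bare statement.
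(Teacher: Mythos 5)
Your proposal is correct and is essentially the paper's own proof: the paper deduces Corollary \ref{Coro 10 proj} directly from Theorem \ref{Theorem Proj} by taking $S=\Spec(R)$, with the graded base-change identification $\Proj(\mathbb{Z}[x_{0},\ldots,x_{n}]\otimes_{\mathbb{Z}}R)\simeq\Proj R[x_{0},\ldots,x_{n}]$ left implicit (the paper records it only afterwards, before the next corollary). Your write-up simply makes that identification and its compatibility with the structure morphisms explicit, which is a harmless elaboration of the same argument.
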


\begin{proof} This follows from Theorem \ref{Theorem Proj}.
\end{proof}

\begin{corollary} For any scheme $S$, the connected components of $\mathbb{P}_{S}^{n}$, as reduced induced closed subschemes, are exactly of the form $\mathbb{P}_{C}^{n}$
where $C$ is a connected component of $S$ which is equipped with the reduced induced closed subscheme structure.
\end{corollary}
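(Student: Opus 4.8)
The plan is to deduce this from Theorem~\ref{Theorem Proj} and Remark~\ref{Remark fiber 4}, together with the standard compatibility of projective space with base change; this parallels the proof of the qcqs case given just above. First I would invoke Theorem~\ref{Theorem Proj} to identify the connected components of $\mathbb{P}_{S}^{n}$ as precisely the subsets $f^{-1}(C)$, where $f:\mathbb{P}_{S}^{n}\rightarrow S$ is the canonical morphism and $C$ ranges over the connected components of $S$. Since a connected component of any topological space is automatically a closed subset, each such $C$ is a closed subset of $S$, which is exactly the hypothesis needed to apply Remark~\ref{Remark fiber 4}.

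Next, equipping $C$ with its reduced induced closed subscheme structure and using the last assertion of Remark~\ref{Remark fiber 4}, the reduced induced closed subscheme $f^{-1}(C)$ is canonically isomorphic to the fiber product $\mathbb{P}_{S}^{n}\times_{S}C$. It then remains to identify this fiber product with $\mathbb{P}_{C}^{n}$. Recalling the definition $\mathbb{P}_{S}^{n}=\mathbb{P}_{\mathbb{Z}}^{n}\times_{\Spec(\mathbb{Z})}S$ and applying the associativity (cancellation) of fiber products, I would compute
$$\mathbb{P}_{S}^{n}\times_{S}C=\bigl(\mathbb{P}_{\mathbb{Z}}^{n}\times_{\Spec(\mathbb{Z})}S\bigr)\times_{S}C\simeq\mathbb{P}_{\mathbb{Z}}^{n}\times_{\Spec(\mathbb{Z})}C=\mathbb{P}_{C}^{n},$$
where $C$ again carries the reduced induced closed subscheme structure. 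Combining the two isomorphisms yields that each connected component of $\mathbb{P}_{S}^{n}$, viewed as a reduced induced closed subscheme, is isomorphic to $\mathbb{P}_{C}^{n}$, and conversely every scheme of the form $\mathbb{P}_{C}^{n}$ with $C$ a connected component of $S$ arises in this way.

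There is essentially no serious obstacle here, as the statement is a formal combination of already-established results; the only points requiring a moment's care are the observation that connected components are automatically closed (so that Remark~\ref{Remark fiber 4} is applicable) and the careful bookkeeping of which scheme structure is placed on $C$ at each stage. The base-change identity $\mathbb{P}_{S}^{n}\times_{S}C\simeq\mathbb{P}_{C}^{n}$ is standard and follows immediately from the defining fiber-product description of $\mathbb{P}_{S}^{n}$.
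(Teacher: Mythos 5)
Your proposal is correct and follows exactly the paper's route: the paper's proof is the one-line "This follows from Theorem \ref{Theorem Proj} and Remark \ref{Remark fiber 4}," and your argument is precisely the natural expansion of that, including the implicit base-change identification $\mathbb{P}_{S}^{n}\times_{S}C\simeq\mathbb{P}_{C}^{n}$ that the paper also relies on (and records before the subsequent corollary). No gaps; the bookkeeping of the reduced induced closed subscheme structure on $C$ is handled just as the paper intends.
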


\begin{proof} This follows from Theorem \ref{Theorem Proj} and Remark \ref{Remark fiber 4}.
\end{proof}

Recall that if $R=\bigoplus\limits_{n\geqslant0}R_{n}$ is an $\mathbb{N}$-graded ring and $S$ is an $R_{0}$-algebra, then the ring $R\otimes_{R_{0}}S$ is $\mathbb{N}$-graded with homogeneous components $R_{n}\otimes_{R_{0}}S$ and we have the canonical isomorphism of schemes $\Proj(R)\times_{\Spec(R_{0})}\Spec(S)
\simeq\Proj(R\otimes_{R_{0}}S)$. Using this and the above result, we arrive at the following conclusion:  

\begin{corollary} For any ring $R$, the connected components of $\mathbb{P}_{R}^{n}$, as reduced induced closed subschemes, are exactly of the form $\mathbb{P}_{R/I}^{n}$ where $I$ is a radical ideal of $R$ such that $V(I)$ is a connected component of $\Spec(R)$. 
\end{corollary}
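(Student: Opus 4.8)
The plan is to specialize the preceding corollary to the affine base $S=\Spec(R)$ and then to translate its geometric conclusion, phrased in terms of $\mathbb{P}^{n}_{C}$, into the purely algebraic statement about $\mathbb{P}^{n}_{R/I}$ by invoking the base-change isomorphism for $\Proj$ recalled just above the statement.

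First I would apply the preceding corollary with $S=\Spec(R)$: its connected components, regarded as reduced induced closed subschemes, are exactly the $\mathbb{P}^{n}_{C}$ where $C$ runs over the connected components of $\Spec(R)$, each equipped with its reduced induced closed subscheme structure. Next I would make $C$ explicit. A connected component of $\Spec(R)$ is closed, so $C=V(\mathfrak{a})$ for some ideal $\mathfrak{a}$; its reduced induced closed subscheme is $\Spec(R/I)$, where $I=\sqrt{\mathfrak{a}}$ is the unique radical ideal of $R$ with $V(I)=C$ (compare the description of the components of $\Spec(R)$ following Corollary \ref{Lemma iii}). Hence $C\simeq\Spec(R/I)$ with $I$ radical and $V(I)$ a connected component of $\Spec(R)$.

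It then remains to identify $\mathbb{P}^{n}_{C}$ with $\mathbb{P}^{n}_{R/I}$. For this I would apply the base-change isomorphism to the $\mathbb{N}$-graded ring $\mathbb{Z}[x_{0},\ldots,x_{n}]$ (with degree-zero part $\mathbb{Z}$ and the variables in degree one) and the $\mathbb{Z}$-algebra $R/I$. Since $\mathbb{Z}[x_{0},\ldots,x_{n}]\otimes_{\mathbb{Z}}(R/I)=(R/I)[x_{0},\ldots,x_{n}]$ as graded rings, this yields
$$\mathbb{P}^{n}_{C}=\mathbb{P}^{n}_{\mathbb{Z}}\times_{\Spec(\mathbb{Z})}\Spec(R/I)\simeq\Proj\big((R/I)[x_{0},\ldots,x_{n}]\big)=\mathbb{P}^{n}_{R/I}.$$
The converse is immediate: any radical $I$ with $V(I)$ a connected component of $\Spec(R)$ arises in this way, so each such $\mathbb{P}^{n}_{R/I}$ is a connected component of $\mathbb{P}^{n}_{R}$.

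Since everything reduces to the preceding corollary together with this base change, I do not expect a serious obstacle; the one point requiring care is the matching of scheme structures. The check I anticipate is that $\mathbb{P}^{n}_{R/I}$ genuinely carries the reduced induced closed subscheme structure on the corresponding component: this holds because $I$ radical makes $R/I$ reduced, whence $(R/I)[x_{0},\ldots,x_{n}]$ and its $\Proj$ are reduced, and combined with the displayed isomorphism this confirms that the reduced structure on $\mathbb{P}^{n}_{C}$ is precisely $\mathbb{P}^{n}_{R/I}$. Alternatively, one may run the same argument directly from Theorem \ref{Theorem Proj} and Remark \ref{Remark fiber 4} in place of the preceding corollary.
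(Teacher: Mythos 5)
Your proposal is correct and takes essentially the same route as the paper: the paper obtains this corollary precisely by specializing the preceding corollary to $S=\Spec(R)$ and invoking the recalled base-change isomorphism $\Proj(R)\times_{\Spec(R_{0})}\Spec(S)\simeq\Proj(R\otimes_{R_{0}}S)$, exactly as you do. Your closing check that $(R/I)[x_{0},\ldots,x_{n}]$ is reduced, so that $\mathbb{P}^{n}_{R/I}$ really carries the reduced induced closed subscheme structure, spells out a point the paper leaves implicit.
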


\begin{remark} Inspired by the above results (Theorem \ref{Theorem Proj} and Corollary \ref{Coro 10 proj}), the first approach that comes to mind is that, for an $\mathbb{N}$-graded ring $R=\bigoplus\limits_{n\geqslant0}R_{n}$, 
the connected components of scheme $\Proj(R)$ are probably of the form $\Proj(R\otimes_{R_{0}}R_{0}/I)$ where $I$ is an ideal of $R_{0}$ such that $V(I)$ is a connected component of $\Spec(R_{0})$. But then we observed that this claim is not true. As a counterexample, consider the $\mathbb{N}$-graded ring $R=k[x,y]/(x^{2}-y^{2})$ with $R_{0}=k$ is a field of characteristic $\neq2$, then $\Proj(R)$ is not connected, since it can be written as the disjoint union of nonempty opens $D_{+}(f)$ and $D_{+}(g)$ where $f$ and $g$ are the images of $x-y$ and $x+y$ in $R$. 
\end{remark}

We conclude this section with the following auxiliary result:

\begin{proposition} If $R=\bigoplus\limits_{n\geqslant0}R_{n}$ is an $\mathbb{N}$-graded ring, then the topology over $\Proj(R)$ with the basis opens $D_{+}(f)$ where $f\in R$ is a homogeneous element of positive degree, and the subspace topology over $\Proj(R)$ induced by $\Spec(R)$ are the same.
\end{proposition}

\begin{proof} It is clear that the topology over $\Proj(R)$ with the basis opens $D_{+}(f)$ is contained in the subspace topology. It is also clear that the basis opens of the subspace topology over $\Proj(R)$ are of the form $D(r)\cap\Proj(R)$ where $r\in R$. If $r$ is a homogeneous element of degree zero, then $D(r)\cap\Proj(R)=\bigcup\limits_{f} D_{+}(rf)$ where the union is taken over the set of homogeneous elements of positive degree $f\in R$ such that $rf\neq0$. This shows that in this case, $D(r)\cap\Proj(R)$ is an open subset in the second topology, i.e., the topology with the basis opens $D_{+}(f)$. \\ 
In general, we may write $r=\sum\limits_{n\geqslant0}r_{n}$ where $r_{n}\in R_{n}$ for all $n\geqslant0$ (the $r_{n}=0$ except for finitely many indices $n$). Then $D(r)\cap\Proj(R)=
\bigcup\limits_{n\geqslant0}D(r_{n})\cap\Proj(R)$.
If $r_{0}\neq0$, then in the above we observed that $D(r_{0})\cap\Proj(R)$ is an open subset of the second topology. If $r_{n}\neq0$ for some $n\geqslant1$, then $r_{n}$ is a homogeneous element of positive degree and so $D_{+}(r_{n})=D(r_{n})\cap\Proj(R)$ is a basis open of the second topology. Note that if $r_{n}=0$ for some $n\geqslant0$, then $D(r_{n})=\emptyset$. Therefore, $D(r)\cap\Proj(R)$ is an open subset of the second topology. 
\end{proof}

\section{Finiteness of connected components}

The main results of this section give us very useful equivalences for the finiteness of the number of connected components of an arbitrary (quasi-compact) topological space in terms of purely algebraic conditions.

First recall that finite nonzero Boolean rings are precisely of the form $(\mathbb{Z}/2)^{n}=
\prod\limits_{k=1}^{n}\mathbb{Z}/2$ where $n\geqslant1$ is a natural number and $\mathbb{Z}/2=\{0,1\}$ is the field of integers modulo 2. Indeed, if $R$ is a finite Boolean ring then $X=\Spec(R)$ is a finite discrete space and so we have isomorphisms of rings $R\simeq\Clop(X)=\mathscr{P}(X)\simeq(\mathbb{Z}/2)^{n}$ where $n=|X|$ is the number of primitive idempotents of $R$.

If $X$ is a topological space, then by $H_{0}(X)$ we mean the set of all continuous functions $X\rightarrow\mathbb{Z}$ where the set of integers $\mathbb{Z}$ is equipped with the discrete topology. This set by the addition $(f+g)(x)=f(x)+g(x)$ and multiplication $(f\cdot g)(x)=f(x)g(x)$ is a (commutative) ring.   
If $f:X\rightarrow\mathbb{Z}$ is a continuous map and $C$ is a connected component of $X$ then $f(C)$ is a connected subset of $\mathbb{Z}$ and so $f(C)=\{r_C\}$ for some integer $r_C\in\mathbb{Z}$. Then the map $H_{0}(X)\rightarrow\mathbb{Z}^{\kappa}=
\prod\limits_{\kappa}\mathbb{Z}$ given by $f\mapsto(r_C)_{C\in\pi_{0}(X)}$
is an injective morphism of rings with $\kappa=|\pi_{0}(X)|$. 
 
For any topological space $X$, the map $f:\Clop(X)\rightarrow\mathscr{B}\big(H_{0}(X)\big)$ given by $A\mapsto f_A$ is an isomorphism of rings where $f_A:X\rightarrow\mathbb{Z}$ is the characteristic function of $A$ that is defined by $f_A(x)=1$ if $x\in A$ otherwise $f_A(x)=0$.

We also need the following simple observation in the first main result of this section:  

\begin{remark}\label{Remark bir-i} Let $f:A\rightarrow B$ be a morphism of rings such that $B$ as an $A$-module is generated by a subset $S\subseteq B$ with $bb'=0$ for all distinct elements $b,b'\in S$. Then the unit element of $B$ can be written as $1=\sum\limits_{k=1}^{n}b_{k}$ where $b_{k}\in S$ for all $k$. Indeed, there exists finitely many (distinct) elements $b_1,\ldots,b_n\in S$ such that $1=\sum\limits_{k=1}^{n}f(a_{k}) b_k$ with $a_{k}\in A$ for all $k$. But we have $b_{i}=\big(\sum\limits_{k=1}^{n}f(a_{k})b_k\big)b_{i}=
f(a_{i})b_{i}$. Hence, $1=\sum\limits_{k=1}^{n}b_k$. In particular, the unit ideal of $B$ is generated by the $b_k$, i.e., $B=(b_1,\ldots,b_n)$. 
\end{remark} 

Now we prove the first main result of this section:

\begin{theorem}\label{Theorem iii-uch} For a topological space $X$ the following assertions are equivalent: \\
$\mathbf{(i)}$ $X$ has finitely many connected components. \\
$\mathbf{(ii)}$ There exists an isomorphism of rings $\Clop(X)\simeq(\mathbb{Z}/2)^{n}$ for some natural number $n\geqslant0$. \\
$\mathbf{(iii)}$ The unit ideal of $\Clop(X)$ is generated by a set of its primitive idempotents. \\
$\mathbf{(iv)}$  $\Clop(X)$ as a vector space  over the field $\mathbb{Z}/2$ has a basis 
consisting of primitive idempotents. \\
$\mathbf{(v)}$ There exists an isomorphism of rings $H_{0}(X)\simeq\mathbb{Z}^{n}$ for some natural number $n\geqslant0$. \\
$\mathbf{(vi)}$ $H_{0}(X)$ is a free Abelian group with a basis consisting of primitive idempotents. \\
$\mathbf{(vii)}$ The unit ideal of $H_{0}(X)$ is generated by a set of its primitive idempotents.
\end{theorem}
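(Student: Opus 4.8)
The plan is to prove the seven conditions equivalent through two closed cycles of implications sharing condition (i), one cycle collecting the statements about $\Clop(X)$ and the other those about $H_{0}(X)$. The two cycles are kept in lockstep by the ring isomorphism $\Clop(X)\simeq\mathscr{B}\big(H_{0}(X)\big)$, $A\mapsto f_{A}$, together with the fact that the primitive idempotents of a ring and of its Boolean ring coincide; hence in both $\Clop(X)$ and $H_{0}(X)$ the primitive idempotents are exactly the characteristic functions of the connected components of $X$ that happen to be open. The geometric hinge underlying every implication is the remark that if $X$ has only finitely many connected components $C_{1},\ldots,C_{n}$, then each $C_{i}$ is clopen, being the complement of the finite (hence closed) union $\bigcup_{j\neq i}C_{j}$ of the other (closed) components. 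This is precisely what allows the passage between the topology of $X$ and the algebra of $\Clop(X)$ and $H_{0}(X)$ with no quasi-compactness assumption; the empty space furnishes the degenerate case $n=0$ throughout.

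For the $\Clop(X)$ cycle I would run (i)$\Rightarrow$(ii)$\Rightarrow$(iv)$\Rightarrow$(iii)$\Rightarrow$(i). Granting that the $C_{i}$ are clopen, every clopen $A$ meets each connected component in $\emptyset$ or all of it, so $A=\bigcup_{C_{i}\subseteq A}C_{i}$ and $\Clop(X)\simeq\mathscr{P}(\{1,\ldots,n\})\simeq(\mathbb{Z}/2)^{n}$, giving (i)$\Rightarrow$(ii). The implications (ii)$\Rightarrow$(iv) and (iv)$\Rightarrow$(iii) are formal: the coordinate idempotents $e_{1},\ldots,e_{n}$ of $(\mathbb{Z}/2)^{n}$ are primitive, form a $\mathbb{Z}/2$-basis, and sum to $1$, and these features are preserved by any ring isomorphism, while a basis of primitive idempotents in particular expresses $1$ as a finite sum of them.

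The return step (iii)$\Rightarrow$(i) is where I expect the real work, and it is the step I would single out as the main obstacle. Here I would invoke Remark \ref{Remark bir-i} with $A=B=\Clop(X)$ and $S$ a set of primitive idempotents generating the unit ideal: since distinct primitive idempotents are orthogonal, the Remark collapses the generating expression to $1=\sum_{k=1}^{n}b_{k}$. Reading this in $\Clop(X)$ and using that the $b_{k}$ are pairwise disjoint open connected components, it becomes $X=C_{1}\sqcup\cdots\sqcup C_{n}$; as these cover $X$ and are themselves connected components, they are all of them, so $X$ has exactly $n$ connected components. The difficulty is entirely in recognizing that an \emph{ideal}-theoretic generation statement forces the much stronger \emph{partition} $1=\sum b_{k}$, which is exactly what orthogonality (via Remark \ref{Remark bir-i}) supplies.

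The $H_{0}(X)$ cycle (i)$\Rightarrow$(v)$\Rightarrow$(vi)$\Rightarrow$(vii)$\Rightarrow$(i) is strictly parallel. For (i)$\Rightarrow$(v), a continuous $f\colon X\to\mathbb{Z}$ is constant on each connected component, its image being a connected subset of the discrete space $\mathbb{Z}$, so $f\mapsto(f|_{C_{1}},\ldots,f|_{C_{n}})$ is an isomorphism $H_{0}(X)\simeq\mathbb{Z}^{n}$, surjectivity again using that the $C_{i}$ are clopen. The steps (v)$\Rightarrow$(vi) and (vi)$\Rightarrow$(vii) transport the standard basis of $\mathbb{Z}^{n}$ exactly as before, and the closing (vii)$\Rightarrow$(i) repeats the orthogonality collapse of Remark \ref{Remark bir-i} inside $H_{0}(X)$, where $1=\sum_{k}b_{k}$ now reads as an equality of characteristic functions forcing $X=\bigsqcup_{k}C_{k}$. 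Since both cycles pass through (i), the seven conditions are equivalent.
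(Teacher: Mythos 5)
Your proposal is correct and takes essentially the same approach as the paper: the same geometric hinge (finitely many components forces each component to be clopen), the same identification of the primitive idempotents of $\Clop(X)$ and of $H_{0}(X)$ with the open connected components, and the same use of Remark \ref{Remark bir-i} together with orthogonality of distinct primitive idempotents to return to (i). The only deviations are cosmetic reroutings of implications (you run two cycles through (i) and close (iii)$\Rightarrow$(i), (vii)$\Rightarrow$(i) by the orthogonality collapse, whereas the paper proves (i)$\Rightarrow$(ii),(iii),(iv),(v) in one stroke, handles (iii)$\Rightarrow$(i) with arbitrary clopen coefficients, and routes (vii) back through (iii) via $\mathscr{B}\big(H_{0}(X)\big)\simeq\Clop(X)$), none of which changes the substance of the argument.
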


\begin{proof} (i)$\Rightarrow$(ii), (iii), (iv) and (v): Without loss of generality, we may assume $X$ is nonempty (because if $X$ is empty then $\Clop(X)\simeq(\mathbb{Z}/2)^{0}$ and $H_{0}(X)\simeq\mathbb{Z}^{0}$ are the zero rings, and the unit ideal of $\Clop(X)$ is generated by the empty set). Assume $C_1,\ldots,C_n$ are all (distinct) connected components of $X$ with $n\geqslant1$. Then $C_k$ is a clopen subset of $X$ and so $C_k\in\Clop(X)$ for all $k$. In fact, it can be seen that each $C_k$ is a primitive idempotent of $\Clop(X)$.
The unit element of $\Clop(X)$ can be written as $X=\bigcup\limits_{k=1}^{n}C_k=\sum\limits_{k=1}^{n}C_k$. This shows that the unit ideal of $\Clop(X)$ is generated by the $C_k$, i.e., $\Clop(X)=(C_1,\ldots,C_n)$. 
If $A\in\Clop(X)$ then $A=\bigcup\limits_{k=1}^{n}A\cap C_k=\sum\limits_{k=1}^{n}AC_k$. But $AC_k=A\cap C_k$ is a clopen subset of the connected space $C_k$. Thus $A\cap C_k$ is the empty or the whole space $C_k$. Then we may write $AC_k=s_{k}\cap C_k=s_{k}C_k$ with $s_k\in\{\emptyset, X\}$. So $A=\sum\limits_{k=1}^{n}s_{k}C_k$. Then we show that such a presentation is unique. If $\sum\limits_{k=1}^{n}s_{k}C_{k}=
\emptyset$ then $s_{i}\cap C_{i}=s_{i}C_{i}=C_{i}(\sum\limits_{k=1}^{n}s_{k}C_{k})=
\emptyset$ and $C_i\neq\emptyset$. Thus $s_i=\emptyset$ for all $i$. This shows that $\{C_1,\ldots,C_n\}$ is a basis for $\mathbb{Z}/2$-space $\Clop(X)$. It also shows that the Boolean ring $\Clop(X)$ has $2^n$ elements and so it is isomorphic to $(\mathbb{Z}/2)^{n}$. Finally, we have an injective morphism of rings $H_{0}(X)\rightarrow\mathbb{Z}^{n}$ that is given by $f\mapsto(r_1,\ldots,r_n)$ with $f(C_k)=\{r_k\}$ for all $k$. If $(a_1,\ldots,a_n)\in\mathbb{Z}^{n}$, then the map $g:\Spec(R)\rightarrow\mathbb{Z}$
defined by $g(C_k)=\{a_k\}$ for all $k$ is continuous. Because for each $b\in\mathbb{Z}$, we have $g^{-1}(\{b\})=\bigcup\limits_{a_k=b}C_k$ is an open subset of $X$. Hence, the above ring map $H_{0}(X)\rightarrow\mathbb{Z}^{n}$ 
is an isomorphism. \\   
(ii)$\Rightarrow$(i): It can be seen that the primitive idempotents of a direct product of nonzero rings $\prod\limits_{k\in S}R_k$ are precisely of the form $e_k=(\delta_{i,k})_{i\in S}$ with $k\in S$ and $\delta_{i,k}$ is the Kronecker delta. By hypothesis, there exists an isomorphism of rings $f:(\mathbb{Z}/2)^{n}\rightarrow\Clop(X)$. In the ring $(\mathbb{Z}/2)^{n}$ we have $1=\sum\limits_{k=1}^{n}e_k$ and so $X=\sum\limits_{k=1}^{n}A_k=\bigcup\limits_{k=1}^{n}A_k$ with $A_k=f(e_k)$ is a primitive idempotent of $\Clop(X)$ for all $k$. Thus each $A_k$ is a connected component of $X$. Conversely, assume $C$ is a connected component of $X$. We may write $C=\bigcup\limits_{k=1}^{n}C\cap A_k$. It follows that $C\cap A_k\neq\emptyset$ for some $k$. 
But $C\cap A_k$ is a clopen subset of $C$. Thus $C\cap A_k=C$. This yields that $C\subseteq A_k$ and so $C=A_k$. Hence, $\pi_{0}(X)=\{A_1,\ldots,A_n\}$. \\
(iii)$\Rightarrow$(i): We may assume $\Clop(X)\neq0$ (because $X$ is the empty if and only if $\Clop(X)=0$).
Then by hypothesis, there exist finitely many (distinct) primitive idempotents $A_1,\ldots,A_n\in\Clop(X)$  with $n\geqslant1$ such that
$X=\sum\limits_{k=1}^{n}B_{k}A_{k}=
\bigcup\limits_{k=1}^{n}B_{k}\cap A_{k}$ where $B_{k}\in\Clop(X)$ for all $k$. It is clear that $\{A_1,\ldots,A_n\}\subseteq\pi_{0}(X)$. Conversely, take $C\in\pi_{0}(X)$. We have $C=\bigcup\limits_{k=1}^{n}C\cap B_{k}\cap A_{k}$. Thus $C\cap B_{k}\cap A_{k}\neq\emptyset$ for some $k$. But $C\cap B_{k}\cap A_{k}$ is a clopen subset of the connected space $C$. Then $C=C\cap B_{k}\cap A_{k}\subseteq A_k$ and so $C=A_k$. Hence, $\pi_{0}(X)=\{A_1,\ldots,A_n\}$. \\
(iv)$\Rightarrow$(iii): It is clear (see Remark \ref{Remark bir-i}). \\
(v)$\Rightarrow$(vi): By hypothesis, there exists an isomorphism of rings $\phi:\mathbb{Z}^{n}\rightarrow H_{0}(X)$. Consider the primitive idempotents  $e_k=(\delta_{i,k})_{i=1}^{n}$ of $\mathbb{Z}^n$, then we will show that $\{f_1,\ldots,f_n\}$ is a basis for the $\mathbb{Z}$-module $H_{0}(X)$ where each $f_k=\phi(e_k)$ is a primitive idempotent of $H_{0}(X)$. If $g\in H_{0}(X)$ then there exists some $(a_1,\ldots,a_n)\in\mathbb{Z}^{n}$ such that $g=\phi\big((a_1,\ldots,a_n)\big)=
\phi(\sum\limits_{k=1}^{n}a_{k}e_k)=
\sum\limits_{k=1}^{n}\phi(a_{k}e_k)=
\sum\limits_{k=1}^{n}a_{k}f_k$. To complete the proof, we show that such a representation is unique. 
If $\sum\limits_{k=1}^{n}a_{k}f_k=0$ then $f_{i}(\sum\limits_{k=1}^{n}a_{k}f_k)=0$. It follows that $a_{i}f_{i}=0$. But $f_i\neq0$ and so $f_{i}(x)$ is a nonzero integer for some $x\in X$. We have $a_{i}f_{i}(x)=0$ and so $a_i=0$ for all $i$. \\
(vi)$\Rightarrow$(vii): It is clear (see Remark \ref{Remark bir-i}). \\
(vii)$\Rightarrow$(iii): The assertion is deduced from the identification $\mathscr{B}\big(H_{0}(X)\big)\simeq\Clop(X)$ and the fact that distinct primitive idempotents $e$ and $e'$ are orthogonal and hence $e\oplus e'=e+e'$.   
\end{proof} 

\begin{corollary}\label{Coro 8-sekiz} A topological space $X$ has finitely many connected components  if and only if $\Clop(X)$ is finite. In this case, $|\Clop(X)|=2^n$ with $n=|\pi_{0}(X)|$ is the number of primitive idempotents of $\Clop(X)$. 
\end{corollary}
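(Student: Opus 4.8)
The plan is to derive Corollary \ref{Coro 8-sekiz} almost entirely from the equivalence (i)$\Leftrightarrow$(ii) already established in Theorem \ref{Theorem iii-uch}, which is the substantive work; this corollary is essentially a repackaging of that equivalence together with a cardinality count. First I would prove the forward implication: assuming $X$ has finitely many connected components, Theorem \ref{Theorem iii-uch}(i)$\Rightarrow$(ii) furnishes an isomorphism of rings $\Clop(X)\simeq(\mathbb{Z}/2)^{n}$ for some $n\geqslant0$. Since $(\mathbb{Z}/2)^{n}$ is a finite set with $2^{n}$ elements, $\Clop(X)$ is finite. For the converse, if $\Clop(X)$ is finite then it is in particular a finite Boolean ring, and by the structure result recalled at the start of this section, every finite nonzero Boolean ring is isomorphic to $(\mathbb{Z}/2)^{n}$ for some $n\geqslant1$ (and the zero ring corresponds to $n=0$). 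Thus condition (ii) of Theorem \ref{Theorem iii-uch} holds, and invoking (ii)$\Rightarrow$(i) gives that $X$ has finitely many connected components.

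For the quantitative assertion, I would argue as follows. When $X$ has finitely many connected components, the proof of (i)$\Rightarrow$(ii) in Theorem \ref{Theorem iii-uch} shows not merely that an abstract isomorphism $\Clop(X)\simeq(\mathbb{Z}/2)^{n}$ exists, but that the connected components $C_1,\ldots,C_n$ themselves form a basis for the $\mathbb{Z}/2$-vector space $\Clop(X)$; consequently every element of $\Clop(X)$ has a unique representation $\sum_{k=1}^{n}s_kC_k$ with $s_k\in\{\emptyset,X\}$, giving exactly $2^{n}$ elements. Hence $|\Clop(X)|=2^{n}$ where $n$ is the number of connected components. It remains to observe that this $n$ equals the number of primitive idempotents of $\Clop(X)$: indeed, as noted earlier in the section, the primitive idempotents of $\Clop(X)$ are precisely the connected components of $X$ that are also open, and when there are finitely many components each component $C_k$ is clopen, hence open, hence a primitive idempotent. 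Therefore $n=|\pi_0(X)|$ coincides with the number of primitive idempotents, completing the count.

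The only point requiring a little care is the degenerate case $X=\emptyset$, where $\Clop(X)$ is the zero ring: here $n=0$, the empty product $(\mathbb{Z}/2)^{0}$ is the zero ring with $2^{0}=1$ element, and the formula $|\Clop(X)|=2^{n}$ still holds vacuously. I would dispatch this case by the same convention used in the proof of Theorem \ref{Theorem iii-uch}. I do not expect any genuine obstacle here, since all the real content lives in Theorem \ref{Theorem iii-uch} and in the recalled classification of finite Boolean rings; the corollary is a clean consequence whose proof amounts to citing the theorem and performing the elementary count $|(\mathbb{Z}/2)^{n}|=2^{n}$.

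\begin{proof}
The equivalence is precisely the equivalence (i)$\Leftrightarrow$(ii) of Theorem \ref{Theorem iii-uch}, once we observe that a Boolean ring is finite if and only if it is isomorphic to $(\mathbb{Z}/2)^{n}$ for some $n\geqslant0$ (as recalled at the beginning of this section). For the cardinality, assume $X$ has finitely many connected components $C_1,\ldots,C_n$. As shown in the proof of (i)$\Rightarrow$(ii) in Theorem \ref{Theorem iii-uch}, the components $C_1,\ldots,C_n$ form a basis of the $\mathbb{Z}/2$-vector space $\Clop(X)$, so $\Clop(X)$ has exactly $2^{n}$ elements. Finally, since each $C_k$ is clopen it is in particular open, and hence a primitive idempotent of $\Clop(X)$; conversely every primitive idempotent of $\Clop(X)$ is a connected component of $X$ that is open. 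Therefore $n=|\pi_{0}(X)|$ equals the number of primitive idempotents of $\Clop(X)$, and $|\Clop(X)|=2^{n}$.
\end{proof}
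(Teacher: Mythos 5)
Your proposal is correct and follows essentially the same route as the paper: the paper's own proof is just the citation ``It follows from Theorem \ref{Theorem iii-uch} (and its proof)'', and you have simply unpacked that citation --- forward direction via (i)$\Rightarrow$(ii), converse via the classification of finite Boolean rings plus (ii)$\Rightarrow$(i), and the count $|\Clop(X)|=2^{n}$ with $n=|\pi_{0}(X)|$ equal to the number of primitive idempotents extracted from the proof of (i)$\Rightarrow$(ii) and the fact that the primitive idempotents of $\Clop(X)$ are the open connected components.
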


\begin{proof} It follows from Theorem \ref{Theorem iii-uch} (and its proof).
\end{proof}

If a topological space $X$ has finitely many irreducible components, then $X$ has finitely many connected components. But its converse is not true. As an example, consider $\Spec(R)$ where $R$ is a local ring with infinitely many minimal primes. 

\begin{example} For any topological space $X$, the cardinal of primitive idempotents of $\Clop(X)\leqslant|\pi_{0}(X)|$. If $\Clop(X)$ is finite, then the equality holds (see Corollary \ref{Coro 8-sekiz}). But by giving an example, we show that the converse is not true. \\
Let $S$ be an infinite set and let $R$ be the Boolean ring of all subsets of $S$ that are finite or cofinite (its complement is finite). Then it can be seen that the prime (maximal) ideals of $R$ are precisely $\Fin(S)$ or of the form $M_{x}\cap R$ where $x\in S$ and $M_{x}=\mathscr{P}(S\setminus\{x\})$, and so $X=\Spec(R)$ has precisely $|S|+1=|S|$ points (in fact, it can be shown that $X$ is the one-point compactification of the discrete space $S$ whose point at infinity is $\Fin(S)$, the set of all finite subsets of $S$). The prime spectrum of every zero dimensional ring is totally disconnected, and hence the cardinal of connected components of $X$ is equal to $|S|$. The cardinal of primitive idempotents of $\Clop(X)\simeq R$ also equals $|S|$, because the primitive idempotents of $R$ are precisely the singleton subsets of $S$. But $\Clop(X)$ is infinite.
\end{example}

\begin{example} In Corollary \ref{Coro 8-sekiz}, we also observed that for any nonempty topological space $X$, if $\Clop(X)$ is finite then it has at least a primitive idempotent. But there are infinite Boolean rings without having any primitive idempotents. We will provide such an example. First note that if $R$ is a Boolean ring then the map $e\mapsto D(e)$ is a bijective from the set of primitive idempotents of $R$ onto the set of isolated points of $\Spec(R)$ (in the same vein, the map $e\mapsto R(1-e)$ is also a bijection from the set of primitive idempotents of $R$ onto the set of finitely generated prime ideals of $R$). The space $2^{\mathbb{N}}$, the countably infinite product of the discrete space $\{0,1\}$, equipped with the product topology is a compact totally disconnected space. Then by Corollary \ref{Coro v-5}, there exists an (infinite) Boolean ring $R$ such that $\Spec(R)$ is homeomorphic to $2^{\mathbb{N}}$. But $R$ has no primitive idempotents, because it can be seen that $2^{\mathbb{N}}$ has no isolated points (see \cite[Remark 5.6]{A. Tarizadeh Racsam2}). In fact, for any natural number $n\geqslant0$ there exists an infinite Boolean ring having only $n$ primitive idempotents. Indeed, if $n\geqslant1$ then the set $X=2^{\mathbb{N}}\cup\{1,\ldots,n\}$ endowed with the topology consisting of all $U\cup V$ with $U$ an open subset of $2^{\mathbb{N}}$ and $V$ a subset of $\{1,\ldots,n\}$ is compact with the isolated points $\{1\},\ldots,\{n\}$. The space $X$ is also totally disconnected. Because for each $k\in\{1,\ldots,n\}$ the singleton $\{k\}$ is a clopen subset of $X$ and so it is a connected component of $X$. Let $C$ be a connected component of $X$ with $k\notin C$ for all $k\in\{1,\ldots,n\}$. The map $f:X\rightarrow2^{\mathbb{N}}$ defined by $f(x)=x$ for all $x\in2^{\mathbb{N}}$ and $f(k)=e_{k}$ is continuous, since $f^{-1}(U)=U\cup\{k: e_k\in U\}$. Then $C=f(C)$ is a connected subset of $2^{\mathbb{N}}$ and hence it is singleton. Then by Corollary \ref{Coro v-5}, there exists an infinite Boolean ring corresponding with $X$ that has exactly $n$ primitive idempotents. This observation also shows that the primitive idempotents, unlike the connected components, is not an invariant to check whether a given space has finitely many clopens or not.  
\end{example}

In the following result, the cardinal $\kappa=|\pi_{0}(X)|$ is not assumed to be finite. This is indeed the strengthening point of this theorem.
 
\begin{theorem}\label{Thm 6-six} Let $X$ be a quasi-compact space and  $\kappa=|\pi_{0}(X)|$. Then the following assertions are equivalent: \\
$\mathbf{(i)}$ $X$ has finitely many connected components. \\
$\mathbf{(ii)}$ We have an isomorphism of rings $\Clop(X)\simeq\prod\limits_{\kappa}
\mathbb{Z}/2$. \\
$\mathbf{(iii)}$ $\Clop(X)$ has the cardinality $2^\kappa$.\\
$\mathbf{(iv)}$ We have an isomorphism of rings $H_{0}(X)\simeq\prod\limits_{\kappa}\mathbb{Z}$. \\
$\mathbf{(v)}$ The connected components of $X$ are exactly the primitive idempotents of $\Clop(X)$. \\
$\mathbf{(vi)}$ Every connected component of $X$ is an open subset.
\end{theorem}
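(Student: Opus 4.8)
The plan is to prove the equivalences by first disposing of the purely topological statements (i), (v), (vi), then handling the algebraic reformulations (ii)--(iv), with the single genuinely delicate implication being (iii)$\Rightarrow$(i).

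\textbf{Topological core.} First I would establish (i)$\Leftrightarrow$(vi)$\Leftrightarrow$(v). For (vi)$\Rightarrow$(i): a connected component is always closed, and under (vi) it is also open, so the family of connected components is a partition of $X$ into pairwise disjoint nonempty open sets. This is an open cover, so quasi-compactness yields a finite subcover; but by disjointness that subcover must be the entire family, which is therefore finite. For (i)$\Rightarrow$(vi): if $C_{1},\dots,C_{n}$ are all the components, each is closed, hence each $C_{k}=X\setminus\bigcup_{j\neq k}C_{j}$ is open. Finally (v)$\Leftrightarrow$(vi) is immediate from the fact recorded in \S2 that the primitive idempotents of $\Clop(X)$ are exactly the connected components of $X$ that happen to be open subsets; thus ``every component is a primitive idempotent'' says precisely that every component is open.

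\textbf{Algebraic reformulations.} Next, (i)$\Rightarrow$(ii),(iii),(iv) follow from Theorem \ref{Theorem iii-uch}: when $\kappa=n$ is finite that theorem gives $\Clop(X)\simeq(\mathbb{Z}/2)^{n}=\prod_{\kappa}\mathbb{Z}/2$ and $H_{0}(X)\simeq\mathbb{Z}^{n}=\prod_{\kappa}\mathbb{Z}$, whence $|\Clop(X)|=2^{n}=2^{\kappa}$. The implication (ii)$\Rightarrow$(iii) is trivial, since $|\prod_{\kappa}\mathbb{Z}/2|=2^{\kappa}$, and (iv)$\Rightarrow$(ii) follows by passing to Boolean rings of idempotents: using the isomorphism $\Clop(X)\simeq\mathscr{B}(H_{0}(X))$ of \S5 one gets $\Clop(X)\simeq\mathscr{B}(\prod_{\kappa}\mathbb{Z})=\prod_{\kappa}\mathbb{Z}/2$, because the only idempotents of $\mathbb{Z}$ are $0$ and $1$. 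It then remains only to close the loop with (iii)$\Rightarrow$(i).

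\textbf{The crux: (iii)$\Rightarrow$(i).} Set $B=\Clop(X)$ and $Y=\Spec(B)$; by Stone representation $B\simeq\Clop(Y)$, and $Y$ is compact Hausdorff since every prime of a Boolean ring is maximal. As $X$ is quasi-compact, Lemma \ref{Lemma I}(iii) provides a continuous surjection $\pi_{0}(X)\twoheadrightarrow Y$, whence $|Y|\le\kappa$. If $Y$ is finite, then $|B|=|\Clop(Y)|=2^{|Y|}$ is finite, and $2^{\kappa}=|B|$ forces $\kappa$ finite. If $Y$ is infinite, then, since the clopens form a basis and every clopen is a finite union of basic ones by compactness, $|\Clop(Y)|$ equals the weight $w(Y)$; invoking that for a compact Hausdorff space the weight coincides with the network weight, which is bounded above by the cardinality, yields $|B|=w(Y)\le|Y|\le\kappa$. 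Combined with (iii) this gives $2^{\kappa}\le\kappa$, impossible for infinite $\kappa$. Either way $\kappa$ is finite, which is (i). (As corroboration, one can also see (ii)$\Rightarrow$(i) directly: $\Spec(\prod_{\kappa}\mathbb{Z}/2)$ is the Stone--\v{C}ech compactification of a discrete set of size $\kappa$, of cardinality $2^{2^{\kappa}}$, which cannot be a continuous image of a space with $\kappa$ points when $\kappa$ is infinite.) The main obstacle is precisely this last estimate: excluding infinite $\kappa$ rests on the nontrivial cardinal bound $|\Clop(Y)|\le|Y|$ for infinite Boolean spaces, that is, the weight of an infinite compact Hausdorff space never exceeds its cardinality. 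Everything else reduces to bookkeeping with Theorem \ref{Theorem iii-uch} and the clopen--idempotent correspondences of \S2 and \S5.
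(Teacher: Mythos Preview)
Your proof is correct and follows essentially the same route as the paper: the same cycle of implications, the same appeals to Theorem~\ref{Theorem iii-uch} and Lemma~\ref{Lemma I}(iii), and the same Cantor contradiction for (iii)$\Rightarrow$(i). The only difference is that for the key inequality $|\Clop(X)|\le|\Spec(\Clop(X))|$ in the infinite case, the paper quotes it as a black box from \cite[Theorem 4.5]{A. Tarizadeh Racsam2}, whereas you supply a self-contained justification via the classical bound $w(Y)=nw(Y)\le|Y|$ for infinite compact Hausdorff spaces.
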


\begin{proof} The implications (i)$\Rightarrow$(ii) and (i)$\Rightarrow$(iv) follow from Theorem \ref{Theorem iii-uch}. \\
(ii)$\Rightarrow$(iii): There is nothing to prove. \\
(iii)$\Rightarrow$(i): It is well known that a Boolean ring $R$ is infinite if and only if $|R|\leqslant|\Spec(R)|$. Indeed, for the direct implication see \cite[Theorem 4.5]{A. Tarizadeh Racsam2} and the reverse implication is clear.   
Now if $R=\Clop(X)$ is infinite, then $|R|=2^\kappa\leqslant|\Spec(R)|$. But $X$ is quasi-compact and so by Lemma \ref{Lemma I}(iii),  $|\Spec(R)|\leqslant|\pi_{0}(X)|=\kappa$. It follows that $2^\kappa\leqslant\kappa$. But this contradicts Cantor's theorem (which asserts that $\kappa<2^\kappa$ for any cardinal $\kappa$). Thus $R=\Clop(X)$ is a finite ring and so $\Clop(X)\simeq(\mathbb{Z}/2)^{n}$ for some natural number $n\geqslant0$. Then the assertion follows from Theorem \ref{Theorem iii-uch}. \\
(iv)$\Rightarrow$(ii): By hypothesis, the Boolean ring of $H_{0}(X)$ is isomorphic to $\mathscr{B}(\mathbb{Z}^{\kappa})=(\mathbb{Z}/2)^{\kappa}$. The latter equality follows from the fact that for any direct product of rings $\prod\limits_{k}R_k$ we have $\mathscr{B}(\prod\limits_{k}R_k)=
\prod\limits_{k}\mathscr{B}(R_k)$. But $\Clop(X)$ is (canonically) isomorphic to the Boolean ring of $H_{0}(X)$. \\
(i)$\Rightarrow$(v): It is clear. \\
(v)$\Rightarrow$(i): Since $X$ is quasi-compact, there exist finitely many (distinct) primitive idempotents $A_1,\ldots,A_n\in\Clop(X)$ such that $X=\bigcup\limits_{k=1}^{n}A_k$. Hence, $\pi_{0}(X)=\{A_1,\ldots,A_n\}$. \\
(i)$\Leftrightarrow$(vi): It is clear. 
\end{proof} 

\begin{remark} In Theorem \ref{Thm 6-six},  
the assumption ``quasi-compactness of $X$" is necessary. Without this assumption, for instance, the implication  ``(iii)$\Rightarrow$(i)" will not be true. For example, consider the discrete topology over an infinite set $X$, then $\Clop(X)=\mathscr{P}(X)$ and the (discrete) space $X$ is totally disconnected, i.e., it is homeomorphic to $\pi_{0}(X)$. But $X$ has infinitely many connected components. Also note that in this theorem, the statement (iii) is weaker than statement (ii). In other words, there are examples of infinite Boolean rings $R$ with cardinality $|R|=2^\kappa$ for some infinite cardinal $\kappa$, but $R$ is not isomorphic to $(\mathbb{Z}/2)^\kappa$. Indeed, by \cite[Lemma 4.1]{A. Tarizadeh Racsam2}, there is an infinite Boolean ring $R$ with $|R|=|\Spec(R)|=2^\kappa$. But the ring $R$ is not isomorphic to $(\mathbb{Z}/2)^\kappa$. Because if $R\simeq(\mathbb{Z}/2)^\kappa\simeq\mathscr{P}(S)$ then their prime spectra will be homeomorphic: $\Spec(R)\simeq\Spec\big(\mathscr{P}(S)\big)$ where $S$ is an infinite set with $|S|=\kappa$. This, in particular, yields that they have the same cardinality equal to $2^\kappa$. But
it is well known that $\Spec\big(\mathscr{P}(S)\big)$ is the Stone-Čech compactification of the infinite discrete space $S$ and so its cardinality equals $2^{2^\kappa}$ which is a contradiction. In fact, for any infinite cardinal $\alpha$, this ring $R$ is not isomorphic to $(\mathbb{Z}/2)^\alpha$. Because if they are isomorphic, then we will have $2^\kappa=2^\alpha$. It follows that $\alpha=\kappa$, since otherwise we will have $\alpha<\kappa$ or  $\kappa<\alpha$. If $\alpha<\kappa$ then $\alpha<\kappa<2^\kappa=2^\alpha$ which contradicts the generalized continuum hypothesis. We get the same contradiction if $\kappa<\alpha$.         
\end{remark}

The following result considerably generalizes \cite[Theorem 5.7]{A. Tarizadeh Racsam2}. 

\begin{theorem}\label{Theorem ii-iki} For a nonzero ring $R$ the following assertions are equivalent: \\
$\mathbf{(i)}$ $\Spec(R)$ has finitely many connected components. \\ 
$\mathbf{(ii)}$ $R$ has finitely many idempotents. \\
$\mathbf{(iii)}$ There is an isomorphism of rings $R\simeq\prod\limits_{k=1}^{n}R/(1-e_{k})$ with $e_k$ is a primitive idempotent of $R$ and $n\geqslant1$ is a natural number. \\
$\mathbf{(iv)}$ There is an isomorphism of rings $R\simeq\prod\limits_{i\in\kappa}R/(1-e_{i})$ with $e_i$ is a primitive idempotent of $R$ and $\kappa$ is
the cardinal of connected components of $\Spec(R)$. \\
$\mathbf{(v)}$ The unit ideal of $R$ is generated by a set of its primitive idempotents. \\
$\mathbf{(vi)}$ Every regular ideal of $R$ is finitely generated. \\ 
$\mathbf{(vii)}$ Every max-regular ideal of $R$ is finitely generated. \\
$\mathbf{(viii)}$ The connected components of $\Spec(R)$ are exactly of the form $D(e)$ where $e$ is a primitive idempotent of $R$. 
\end{theorem}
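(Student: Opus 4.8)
The plan is to treat (i) as a hub and to exploit two standard dictionaries. First, $\Spec(R)$ is always quasi-compact, so Theorem \ref{Thm 6-six} and Theorem \ref{Theorem iii-uch} apply to $X=\Spec(R)$. Second, by Lemma \ref{Theorem lrs 5} in its affine form $e\mapsto D(e)$, there is a Boolean ring isomorphism $\mathscr{B}(R)\simeq\Clop(\Spec(R))$, under which the primitive idempotents of $R$ correspond exactly to those $D(e)$ that are connected components (Lemma \ref{Lemma iv}). Granting these, the purely topological equivalences come almost for free: (i)$\Leftrightarrow$(ii) holds because $X$ has finitely many components iff $\Clop(X)$ is finite (Corollary \ref{Coro 8-sekiz}) iff $\mathscr{B}(R)$ is finite iff $R$ has finitely many idempotents; (i)$\Leftrightarrow$(v) is Theorem \ref{Theorem iii-uch}(iii) transported along the isomorphism, once one checks that, since distinct primitive idempotents are orthogonal, ``the unit ideal is generated by primitive idempotents'' unwinds to the same relation $1=e_{1}+\cdots+e_{n}$ in $R$ as in $\mathscr{B}(R)$; and (i)$\Leftrightarrow$(viii) is Theorem \ref{Thm 6-six}(v) read through Lemma \ref{Lemma iv}.

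Next I would supply the genuinely ring-theoretic content, the product decomposition. For (i)$\Rightarrow$(iii)/(iv): if the connected components $C_{1},\dots,C_{n}$ are finite in number, each is clopen (its complement is the union of the remaining finitely many closed components), so $C_{k}=D(e_{k})$ for an idempotent $e_{k}$, which is primitive by Lemma \ref{Lemma iv}. The $D(e_{k})$ partition $\Spec(R)$, so the $e_{k}$ are orthogonal with $e_{1}+\cdots+e_{n}=1$; hence $R\simeq\prod_{k=1}^{n}Re_{k}\simeq\prod_{k=1}^{n}R/(1-e_{k})$ and $\kappa=n$, giving both (iii) and (iv). For the converse (iii)$\Rightarrow$(i): each factor $R/(1-e_{k})$ has no nontrivial idempotents, because $R(1-e_{k})$ is max-regular (Lemma \ref{Lemma iv}, Lemma \ref{Lemma ii}), so $\Spec(R/(1-e_{k}))$ is connected and $\Spec(R)$ is a disjoint union of $n$ connected clopen pieces, i.e.\ has exactly $n$ components.

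The step I expect to be the main obstacle is (iv)$\Rightarrow$(i), precisely because $\kappa$ is \emph{not} assumed finite. Here the naive identity ``$\Spec\big(\prod_{i\in\kappa}R/(1-e_{i})\big)=\coprod_{i\in\kappa}\Spec(R/(1-e_{i}))$'' fails for infinite products, so one cannot merely count pieces. Instead I would pass to Boolean rings: since each $R/(1-e_{i})$ has connected spectrum, $\mathscr{B}(R/(1-e_{i}))=\mathbb{Z}/2$, and using $\mathscr{B}(\prod R_{i})=\prod\mathscr{B}(R_{i})$ one obtains $\Clop(\Spec(R))\simeq\mathscr{B}(R)\simeq(\mathbb{Z}/2)^{\kappa}$, whence $|\Clop(\Spec(R))|=2^{\kappa}=2^{|\pi_{0}(\Spec(R))|}$. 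Now quasi-compactness of $\Spec(R)$ is essential: Theorem \ref{Thm 6-six}(iii)$\Rightarrow$(i), whose proof rests on Cantor's theorem $\kappa<2^{\kappa}$ together with the surjection of Lemma \ref{Lemma I}, forces $\kappa$ to be finite, and then (ii) follows. This is the only place where the possibly-infinite indexing is tamed, and it is exactly what makes condition (iv) nontrivial.

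Finally I would close the loop through the regular-ideal conditions. The implication (ii)$\Rightarrow$(vi) is immediate: a regular ideal is generated by idempotents, of which there are now only finitely many, so it is finitely generated; and (vi)$\Rightarrow$(vii) is trivial since max-regular ideals are regular. For (vii)$\Rightarrow$(viii): a finitely generated regular ideal is principal, generated by an idempotent, so a max-regular ideal has the form $R(1-e)$ with $e$ primitive by Lemma \ref{Lemma iv}; since the connected components of $\Spec(R)$ are exactly the $V(I)$ with $I$ max-regular (Corollary \ref{Lemma iii} and the remark following it) and $V(R(1-e))=D(e)$, every component is of the form $D(e)$ with $e$ primitive, which is (viii). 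Assembling (i)$\Leftrightarrow$(ii)$\Leftrightarrow$(v)$\Leftrightarrow$(viii), the subcycle (i)$\Leftrightarrow$(iii)$\Leftrightarrow$(iv), and (ii)$\Rightarrow$(vi)$\Rightarrow$(vii)$\Rightarrow$(viii) then yields all eight equivalences.
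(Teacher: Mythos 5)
Your proposal is correct and follows essentially the same route as the paper's proof: the same hub at (i), the same dictionary $\mathscr{B}(R)\simeq\Clop(\Spec(R))$ combined with Lemmas \ref{Lemma ii} and \ref{Lemma iv}, the same orthogonal-idempotent decomposition $1=e_{1}+\cdots+e_{n}$ for (iii)--(iv), the same treatment of (v)--(viii), and the identical key step for (iv)$\Rightarrow$(i) via $|\mathscr{B}(R)|=2^{\kappa}$ and Theorem \ref{Thm 6-six}(iii) (Cantor's theorem plus quasi-compactness of $\Spec(R)$). The only differences are organizational --- you prove (i)$\Rightarrow$(iv) and (iii)$\Rightarrow$(i) directly, where the paper routes them through (iii)$\Rightarrow$(iv)$\Rightarrow$(i) by counting idempotents --- which changes nothing of substance.
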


\begin{proof}  
(i)$\Leftrightarrow$(ii): For any ring $R$ with $X=\Spec(R)$, then the map $\mathscr{B}(R)\rightarrow\Clop(X)$ given by $e\mapsto D(e)$ is an isomorphism of rings. 
Then apply Theorem \ref{Theorem iii-uch}. \\
(i)$\Rightarrow$(iii): By Theorem \ref{Thm 6-six}(v), there exist finitely many (distinct) primitive idempotents $e_1,\ldots,e_n\in R$ such that $\Spec(R)=\bigcup\limits_{k=1}^{n}D(e_k)$. Note that $n\geqslant1$, since $R$ is nonzero.  
Using Lemma \ref{Lemma iv}(ii), then we have $R(1-e_i)+R(1-e_k)=R$ for all $i\neq k$. It can be seen that $D(1)=\Spec(R)=\bigcup\limits_{k=1}^{n}D(e_{k})=
D(\sum\limits_{k=1}^{n}e_k)$. But $\sum\limits_{k=1}^{n}e_k$ is idempotent and so $\sum\limits_{k=1}^{n}e_k=1$. It follows that $\bigcap\limits_{k=1}^{n}R(1-e_k)=
\prod\limits_{k=1}^{n}R(1-e_k)=
R\big(\prod\limits_{k=1}^{n}(1-e_k)\big)=0$. Then by the Chinese remainder theorem, the ring map $R\rightarrow\prod\limits_{k=1}^{n}R/(1-e_{k})$ given by $r\mapsto\big(r+R(1-e_k)\big)_{k=1}^{n}$ is an isomorphism. \\
(iii)$\Rightarrow$(iv): By Lemmas \ref{Lemma ii} and  \ref{Lemma iv}, the nonzero ring $R/(1-e_k)$ has no nontrivial idempotents, and so $R$ has precisely $2^n$ idempotents. Then by Corollary \ref{Coro 8-sekiz}, $n$ is the number of connected components of $\Spec(R)$.  \\
(iv)$\Rightarrow$(i): By Lemmas \ref{Lemma ii} and  \ref{Lemma iv}, the nonzero ring $R/(1-e_k)$ has no nontrivial idempotents. Hence, the cardinal of idempotents of $R$ is precisely equal to $2^{\kappa}$. Then the assertion is deduced from Theorem \ref{Thm 6-six}(iii). \\
(i)$\Leftrightarrow$(v): It easily follows from Theorem \ref{Theorem iii-uch} and the fact that distinct primitive idempotents $e$ and $e'$ are orthogonal and hence $e\oplus e'=e+e'$. \\
(ii)$\Rightarrow$(vi)$\Rightarrow$(vii): There is nothing to prove. \\
(vii)$\Rightarrow$(viii): If $e$ is a primitive idempotent of $R$, then by Lemma \ref{Lemma iv}, $D(e)$ is a connected component of $\Spec(R)$. Conversely,
let $V(I)$ be a connected component of $\Spec(R)$ where $I$ is a max-regular ideal of $R$. By hypothesis, there exists an idempotent $e\in R$ such that $I=Re$. Then $V(I)=V(e)=D(1-e)$ and so $1-e$ is a primitive idempotent of $R$ (see Lemma \ref{Lemma iv}). \\
(viii)$\Rightarrow$(i): It is clear. 
\end{proof}

If a ring $R$ has finitely many idempotents, then by Theorem \ref{Theorem ii-iki}(iii),   $\{e_{1},\ldots,e_{n}\}$ is indeed the set of all (distinct) primitive idempotents of $R$. In particular, every Noetherian ring and more generally every ring $R$ with finitely many minimal primes has finitely many idempotents and there is a canonical isomorphism of rings $R\simeq R/(1-e_{1})\times\ldots\times R/(1-e_{n})$. 
Every ring $R$ can be canonically embedded in $\prod\limits_{M\in\Max(R)}R_{M}$. In particular, if $R$ has finitely many maximal ideals, then $R$ has finitely many idempotents and we have the same decomposition $R\simeq R/(1-e_{1})\times\ldots\times R/(1-e_{n})$.

\begin{remark} In Theorem \ref{Theorem ii-iki}(iv), the assumption ``$\kappa$ is the cardinal of connected components of $\Spec(R)$" is vital. Otherwise this theorem cannot be true. For example, take $R=\mathscr{P}(X)$ with $X$ an infinite set. Then $R\simeq\prod\limits_{x\in X}\mathbb{Z}/2\simeq\prod\limits_{x\in X}R/(1-e_x)$ where each $e_x=\{x\}$ is a primitive idempotent of $R$. Note that $|X|<2^{2^{|X|}}=$ the cardinal of connected components of $\Spec(R)$. This example also shows that if a ring has $2^\alpha$ idempotents with $\alpha=$ the cardinal of primitive idempotents, then the ring does not necessarily have finitely many idempotents. 
\end{remark} 

\textbf{Acknowledgment:} The author would like to express his deep thanks to Professor Brian Conrad and Professor Pierre Deligne, who generously shared their very valuable and excellent ideas with us during the writing of this article. 



\end{document}